\newtheorem{thm}{Theorem}[section]
\newtheorem{lem}[thm]{Lemma}
\newtheorem{prop}[thm]{Proposition}
\theoremstyle{definition}
\newtheorem{defn}[thm]{Definition}
\theoremstyle{remark}
\newtheorem{rem}[thm]{Remark}
\numberwithin{equation}{section}
\newcommand{\abs}[1]{\left\vert#1\right\vert}
\newcommand{\HH}{\mathfrak{H}}
\newcommand{\rr}{\mathbb R}
\newcommand{\E}{\mathbb E}
\newcommand{\noi}{\noindent}
\newcommand{\R}{\mathbb R}
\newcommand{\Pp}{\mathbb P}
\newcommand{\N}{\mathbb N}
\newcommand{\Z}{\mathbb Z}
\newcommand{\Var}{{\operatorname{Var}}}
\def\eqfdd{\renewcommand{\arraystretch}{0.5}
\begin{array}[t]{c}
\stackrel{\rm fdd}{=} \\
\end{array}\renewcommand{\arraystretch}{1}}
\def\limd{\renewcommand{\arraystretch}{0.5}
\begin{array}[t]{c}
\stackrel{\rm d}{\longrightarrow} \\
\end{array}\renewcommand{\arraystretch}{1}}
\def\limfdd{\renewcommand{\arraystretch}{0.5}
\begin{array}[t]{c}
\stackrel{\rm fdd}{\longrightarrow} \\
\end{array}\renewcommand{\arraystretch}{1}}
\def\limfdd{\renewcommand{\arraystretch}{0.5}
\begin{array}[t]{c}
\stackrel{\rm fdd}{\longrightarrow} \\
\end{array}\renewcommand{\arraystretch}{1}}
\title{\bf How does tempering affect the local and global properties of fractional Brownian motion?}
\theoremstyle{plain}
\begin{document}

\author{Ehsan Azmoodeh
\thanks{Department of Mathematical Sciences,
University of Liverpool, Liverpool L69 7ZL, United Kingdom.
E-mail: \texttt{ehsan.azmoodeh@liverpool.ac.uk}} \qquad 
Yuliya Mishura
\thanks{Department of Probability Theory, Statistics and Actuarial Mathematics,
Taras Shevchenko National University of Kyiv,
	Volodymyrska, 64, Kyiv, 01601, Ukraine. E-mail: \texttt{myus@univ.kiev.ua}} \qquad 
Farzad Sabzikar
\thanks{Department of Statistics,
	Iowa State University, Ames, IA 50011, USA.
E-mail: \texttt{sabzikar@iastate.edu}}
}

\maketitle

\begin{abstract}
The present paper investigates the effects of tempering the power law kernel of moving average representation of a fractional Brownian motion (fBm) on some   local and global properties of this Gaussian stochastic process. Tempered fractional Brownian motion (TFBM) and tempered fractional Brownian motion of the second kind (TFBMII) are the processes that are considered in order to investigate the role of tempering. Tempering does not change the local properties of fBm including the sample paths and $p$-variation, but it has a strong impact on the Breuer-Major theorem, asymptotic behavior  of the 3rd and 4th cumulants of fBm and the optimal fourth moment theorem.
\end{abstract}

\vskip0.3cm
\noindent \textbf{Keywords}: Fractional Brownian motion; Tempered fractional processes;  Semi--long memory; Breuer--Major theorem;  Limit Theorems; Malliavin calculus
\vskip0.1cm

\noindent \textbf{MSC 2010}: 60F17; 60H07; 60G22; 60G15
\tableofcontents

\section{Introduction}\label{sec-intr}

Fractional Brownian motion (fBm) is a Gaussian stochastic process whose increments, termed fractional Gaussian noise (fGn), can exhibit long range dependence in the sense that the power law spectral density of fGn blows up near the origin \cite{Beranbook, EmbrechtsMaejima, Pipirasbook, Samorodnitskybook}. A fBm has become popular in applications to science and engineering, since it yields a simple tractable model that captures the correlation structure seen in many natural systems \cite{Ascione, Harms, KolmogorovFBM, Mandelbrot1982, Meerschaertbook, Molz}.


Recently, two wide classes of continuous stochastic Gaussian processes which are called tempered fractional Brownian motion (TFBM) and tempered fractional Brownian motion of the second kind (TFBMII) were introduced in \cite{Meerschaertsabzikar} and \cite{SurgailisFarzadTFSMII}, respectively. Unlike the fBm, the TFBM and TFBMII can be defined for any value of the Hurst parameter $H>0$.
TFBM and TFBMII attracted the attention of researchers in various fields. It is known that bifurcation theory is very beneficial to survey
the qualitative or topological changes in the orbit structure of parameterized dynamical systems.
A new stochastic phenomenological bifurcation of the Langevin equation perturbed by TFBM was constructed in \cite{YangZengChen}. As a result, it was shown that the tempered fractional Ornstein-Uhlenbeck process, which is the solution of Langevin equation driven by a tempered fractional Brownian motion, exhibits very diverse and interesting bifurcation phenomena. The paper \cite{DengChenWang} discovered further properties of tempered fractional Brownian motion such as its ergodicity, and the derivation of the corresponding Fokker-Planck equation. Furthermore, they
argued carefully that the asymptotic form of the mean squared displacement of the
tempered fractional Langevin equation transits from $t^2$ (ballistic diffusion for short time) to $t^{2-2H}$ , and then to ${t^2}$ (again ballistic diffusion for long time). The arbitrage opportunities for the Black-Scholes model driven by TFBM was
investigated in \cite{ZhangXiao}.  In \cite{SWP}, the authors developed the asymptotic theory for the ordinary least squares estimator of the unknown coefficient of an autoregressive model of order one when the additive error follows a discrete tempered linear process. Consequently, they showed that the limiting results involves TFBMII under some conditions. Finally, \cite{Lim} introduced some extensions of TFBM including 
Mixed TFBM and tempered multifractional Brownian motion and studied essential properties of these stochastic processes.

TFBM and TFBMII can also be useful stochastic models for situations where the data follows fBm at some intermediate scale, but then deviates from fBm at longer scale.  For example, wind speed measurements typically resemble long range dependent fBm over a range of frequencies, but deviate significantly at very low frequencies (corresponding to very long spatial scales). Since the spectral density of the increments of tempered fractional processes follows the same pattern, they can provide a useful model for such data.  Recently, \cite{BDS} showed that TFBM can display the Davenport spectrum which is a modification of the Kolomogorov $-5/3$ power law spectrum for the inertial range in turbulence. The same paper, also used the wavelets to study the statistical inference including the parameter estimation for TFBM and also hypothesis test for fBm vs TFBM.

The aim of this paper is to study more deeply properties of tempered fractional processes. In fact, our task is to investigate the effects of tempering on some local and global properties of fBm.

In Section \ref{local}, we study the consequences of the tempering on the behavior of the variance, sample paths, and $p$-variation of a fBm. Proposition \ref{lem:usefulrelations 1} gives the
asymptotic properties of the variance of the tempered fractional processes for large time $t$.
Unlike the fBm,  the variance of the TFBM converges to a finite constant  when the time converges to infinity. However, the variance of TFBMII is proportional to $ct$ for large $t$. Therefore, the tempering makes TFBM and TFBMII as stochastically bounded and stochastically unbounded for large $t$, respectively. On the other hand, tempering does not change the sample paths behavior of fBm, see Lemma \ref{lem:TFBM and TFBMII bounds}. Consequently, we prove the existence of local times for TFBM and TFBMII. We will also show that these tempered processes are locally nondeterministic on every compact interval, see Proposition \ref{prop:tfbmIIlocaltime} and Theorem \ref{prop:TFBMIILND}. In Subsection \ref{sec3}, we show that the tempering keeps the same $\frac{1}{H}$-variation for the TFBM and TFBMII. Tempered fractional processes demonstrate the  so called   semi-long range dependence or semi-long memory. Their increments' covariance function resembles long range dependence for a number of lags, depending on the tempering parameter, but eventually decays exponentially fast. The spectral density of tempered fractional Gaussian noise, TFGN, is zero at the origin so that TFBM is anti-persistence process, while the spectral density of TFGNII remains bounded at very low frequencies, see \cite{Meerschaertsabzikar,SurgailisFarzadTFSMII}. The semi-long range dependence property and behavior of the spectral density of tempered fractional processes motivate us to study Breuer-Major theorem for TFGNs in Section \ref{sec:BM}.  The section begins with Lemma \ref{lem:Positive-Negative-Correlation} revealing an interesting switching feature on correlation structure of the tempered fractional Gaussian noise of the first kind. Then, we continue studying the effect of tempering on the popular Breuer--Major Theorem and its modern ramifications in the realm of Malliaivn--Stein method.  Furthermore,
we investigate the asymptotic behavior  of the third and fourth cumulants of
tempered fractional Gaussian processes. The tempering parameter $\lambda$ manifests its role in the optimal fourth
moment as well, see Remark \ref{rem:optimalrate}.\\

In what follows, $C, C_i$ for $i=1,2, \ldots$ denote generic constants
which may be different at different locations.
We write $ \limd $, $ \limfdd $ and  $   \eqfdd $
 for weak convergence in distribution, weak convergence and equality
 of finite-dimensional
distributions, respectively. Also, denote $\R_+ := (0, \infty),
(x)_\pm := \max (\pm x, 0),  x \in \R,  \, \int := \int_\R$.
For two non-negative sequences $a_n$ and $b_n$, we write $a_n\asymp b_n$ to indicate that  $0<\liminf_{n\to\infty}\frac{a_n}{b_n}\leq \limsup_{n\to\infty}\frac{a_n}{b_n}<\infty$.  The relation $f\sim g$ means that $\lim_{z\to\infty}\frac{f(z)}{g(z)}=1$. Some other notations are given in Appendices A and B.

\section{The effects of tempering on the local properties of fBm}\label{local}

\subsection{Asymptotic behavior of the variations of tempered fractional processes}\label{sec2}

Let $\{B(t)\}_{ t\in \rr}$ be a real-valued Brownian motion on the real line, i.e., a zero mean Gaussian process with stationary independent increments and    variance $|t|$ for all $t\in\rr$.  Define an independently scattered Gaussian random measure $B(dx)$ with control measure $m(dx)= dx$ by setting $B((a,b])=B(b)-B(a)$ for any real numbers $a<b$, and then extending to all Borel sets with finite Lebesgue measure.  Then the Wiener integral  $I(f):=\int f(x)B(dx)$ is defined for all functions $f:\rr\to\rr$ such that $\int f(x)^2dx<\infty$, as Gaussian random variable  with  zero mean and covariance $\E[I(f)I(g)]= \int f(x)g(x)dx$. Moreover, well-known Mandelbrot-van-Ness representation of two-sided normalized fractional Brownian motion (fBm) with Hurst index $H\in (0,1)\setminus\left\{1/2\right\}$ has a form
\begin{equation*} B_H(t)=C_H\int\left((t-x)_{+}^{H-\frac{1}{2}}-(-x)_{+}^{H-\frac{1}{2}}\right)B(dx),
\end{equation*}
where  $C_H=\frac{(\Gamma(2H+1)\sin(\pi H))^{1/2}}{\Gamma(H+1/2)}$, see \cite{Mishura}. Now our goal is to modify this representation as follows.
On the one hand, it is possible simply to moderate the integrand by exponent, ignore the normalizing constant  and give the following definition.
\begin{defn}\label{defTFBM}
Given an independently scattered Gaussian random measure $B(dx)$ on $\rr$ with control measure $dx$, for any $H>0$ and $\lambda>0$, the stochastic process $B^I_{H,\lambda}= \{ B^I_{H,\lambda}(t) \}_{t\in \rr}$ defined by the Wiener integral
\begin{equation}\label{eq:TFBMdef}
B^{I}_{H,\lambda}(t):={\int \left[e^{-\lambda(t-x)_{+}}(t-x)_{+}^{H-\frac{1}{2}}-e^{-\lambda(-x)_{+}}(-x)_{+}^{H-\frac{1}{2}}\right]\ B(dx)},
\end{equation}
where $0^0=0$, is called the tempered fractional Brownian motion (TFBM).
\end{defn}

It is easy to check that the function
\begin{equation*}
g^{I}_{H,\lambda,t}(x):=e^{-\lambda(t-x)_{+}}(t-x)_{+}^{H-\frac{1}{2}}-e^{-\lambda(-x)_{+}}(-x)_{+}^{H-\frac{1}{2}}
\end{equation*}
is square integrable over the entire real line for any $H>0$, so that TFBM is well-defined. Note that it is defined for $H=1/2$
as well, in contrast to the Mandelbrot-van-Ness representation, and  equals
$$B^{I}_{1/2,\lambda}(t)=e^{-\lambda t}\int_{-\infty}^t e^{ \lambda x}\ B(dx)-\int_{-\infty}^0e^{ \lambda x}\ B(dx).$$
However, in what follows, we shall consider mostly $H\neq 1/2$.

On the other hand, for $H\neq 1/2$,  the kernel $(t-x)_{+}^{H-\frac{1}{2}}-(-x)_{+}^{H-\frac{1}{2}}$ can be represented as
$$(t-x)_{+}^{H-\frac{1}{2}}-(-x)_{+}^{H-\frac{1}{2}}=(H-1/2) \int_0^t(s-x)_{+}^{H-\frac{3}{2}}ds.$$  Moderating respectively the integrand by the same exponent, integrating by parts and ignoring normalizing constant,
 we get   another tempered stochastic process.
\begin{defn}\label{defTFBMII}
Given an independently scattered Gaussian random measure $B(dx)$ on $\rr$ with control measure $dx$, for any $H>0$ and $\lambda>0$, the stochastic process $B^{I\!I}_{H,\lambda}= \{ B^{I\!I}_{H,\lambda}(t) \}_{t\in \rr}$ defined by the Wiener integral
\begin{equation}\label{eq:TFBMIIdef}
B^{I\!I}_{H,\lambda}(t):= \int  g^{I\!I}_{H,\lambda,t}(x) B(dx),
\end{equation}
where
\begin{equation}\label{hdef0}
\begin{split}
g^{I\!I}_{H,\lambda,t}(x)&:=(t-x)_+^{H - \frac{1}{2}}e^{-\lambda (t-x)_+} - (-x)_+^{H - \frac{1}{2}} e^{-\lambda (-x)_+}\\
&  \quad + \lambda \int_{0}^{t} (s-x)_{+}^{H-\frac{1}{2}}e^{-\lambda(s-x)_{+}}\ ds, \quad x \in \R.
\end{split}
\end{equation}
is called the tempered fractional Brownian motion of the second kind (TFBMII).
\end{defn}

We also  note  that
TFBM \eqref{eq:TFBMdef} and TFBMII \eqref{eq:TFBMIIdef} are Gaussian stochastic processes with stationary increments, having the following   scaling property: for any scaling factor $c>0$
\begin{equation}\label{eq:scalingproperty}
\left\{X_{H,\lambda}(ct)\right\}_{t \in \rr}{\triangleq}\left\{c^{H}X_{H,c\lambda}(t)\right\}_{t \in \rr}
\end{equation}
where $X_{H,\lambda}$ could be $B^{I}_{H,\lambda}$ or $B^{I\!I}_{H,\lambda}$   (see  \cite[Proposition 2.2]{Meerschaertsabzikar} and \cite[Proposition 2.9]{SurgailisFarzadTFSMII}). Using the scaling property \eqref{eq:scalingproperty} and the fact that
 $X_{H,\lambda}(|t|)$ has the same distribution as $|t|^{H} X_{H,\lambda|t|}(1)$, it is easy to see that $\mathbb{E}[(X_{H,\lambda}(|t|))^{2}]=|t|^{2H}\mathbb{E}[(X_{H,\lambda | t|}(1))^{2}]=: |t|^{2H}C^{2}_{t}$. Next, we recall
an explicit representation for $C^{2}_{t}$. We refer the reader to \cite{Meerschaertsabzikar,SurgailisFarzadTFSMII} for the details.

\begin{lem}\label{lem:usefulrelations}
\noi(a) Let $X_{H,\lambda} = B^{I}_{H,\lambda}$. Then  the function $C_{t}^{2}= (C^{I}_{t})^{2}=\mathbb{E}[(B^{I}_{H,\lambda |t|}(1))^{2}]$ has the expression
\begin{equation}\label{eq:CtTFBMDef}
(C^{I}_{t})^{2}=\frac{2\Gamma(2H)}{(2\lambda |t|)^{2H}}-\frac{2\Gamma(H+\frac 12)}{\sqrt{\pi}}\frac{1}{(2\lambda |t|)^{H}}K_{H}(\lambda |t|),
\end{equation}
where $t \neq 0$ and  $K_{\nu}(z)$ is the modified Bessel function of the second kind (see Appendix A for the definition of $K_{\nu}(z)$).

\noi(b) Let $X_{H,\lambda} = B^{I\!I}_{H,\lambda}$. Then  the function $C_{t}^{2}= (C^{I\!I}_{t})^{2}=\mathbb{E}[(B^{I\!I}_{H,\lambda |t|}(1))^{2}]$ has the expression
\begin{equation}\label{eq:CtTFBIIMDef}
\begin{split}
(C^{I\!I}_{t})^{2}&=\frac{(1-2H) \Gamma(H+\frac{1}{2})  \Gamma(H)(\lambda t)^{-2H} }{\sqrt{\pi}}
\Big[1-{_2F_3}{ \Big(\{1,-1/2\}, \{1-H,1/2,1\},  \lambda^2 t^2/4\Big)}\Big]\\
&\quad + \frac{\Gamma (1-H)  {\Gamma(H+\frac{1}{2})}}{ \sqrt{\pi} H 2^{2H}}\,{_2F_3} \Big(\{1,H- 1/2\},
\{1,H+1,H+ 1/2\}, \lambda^2 t^2/4\Big),
\end{split}
\end{equation}
where ${_2F_3}$ is the generalized hypergeometric function (see Appendix A for the definition of ${_2F_3}$).
\end{lem}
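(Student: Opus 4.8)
Since $C^{2}_{t}=\E[(X_{H,\lambda|t|}(1))^{2}]$ by the very definition of $C^{2}_{t}$, it suffices to compute, for a generic tempering parameter $\mu>0$, the variance $V^{\bullet}(\mu):=\E[(B^{\bullet}_{H,\mu}(1))^{2}]=\int(g^{\bullet}_{H,\mu,1}(x))^{2}\,dx$ (the last equality by the Wiener isometry), and then put $\mu=\lambda|t|$. The one analytic input used throughout is the Bessel identity
\begin{equation*}
\int_{0}^{\infty}u^{\nu-1}(u+\beta)^{\nu-1}e^{-p u}\,du=\frac{\Gamma(\nu)}{\sqrt\pi}\Big(\frac{\beta}{p}\Big)^{\nu-\frac12}e^{\,p\beta/2}\,K_{\nu-\frac12}\!\Big(\frac{p\beta}{2}\Big),\qquad \beta,p>0,\ \nu>0,
\end{equation*}
a classical integral formula for the modified Bessel function $K_{\nu}$ (cf.\ Appendix A), used together with the duplication and reflection formulas $\Gamma(2H)=\tfrac{2^{2H-1}}{\sqrt\pi}\Gamma(H)\Gamma(H+\tfrac12)$ and $\Gamma(H)\Gamma(1-H)=\pi/\sin(\pi H)$.

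For (a), expand $(g^{I}_{H,\mu,1})^{2}$ into three pieces. The two diagonal terms become, after the substitutions $u=(1-x)_{+}$ and $u=(-x)_{+}$ respectively, the elementary integral $\int_{0}^{\infty}u^{2H-1}e^{-2\mu u}\,du=\Gamma(2H)/(2\mu)^{2H}$, contributing $2\Gamma(2H)/(2\mu)^{2H}$ in total. The cross term is supported on $\{x<0\}$, where both positive parts are active; the substitution $x=-u$ writes it as $-2e^{-\mu}\int_{0}^{\infty}u^{H-\frac12}(u+1)^{H-\frac12}e^{-2\mu u}\,du$, and the Bessel identity with $\nu=H+\tfrac12$, $\beta=1$, $p=2\mu$ evaluates this to $-\tfrac{2\Gamma(H+\frac12)}{\sqrt\pi}(2\mu)^{-H}K_{H}(\mu)$. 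Adding the three contributions and setting $\mu=\lambda|t|$ gives exactly \eqref{eq:CtTFBMDef}.

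For (b), the essential step is to eliminate the integral term in \eqref{hdef0}. For $H>\tfrac12$, differentiating $s\mapsto(s-x)_{+}^{H-\frac12}e^{-\lambda(s-x)_{+}}$ in $s$ and integrating over $[0,t]$ shows $g^{I\!I}_{H,\lambda,t}(x)=(H-\tfrac12)\int_{0}^{t}(s-x)_{+}^{H-\frac32}e^{-\lambda(s-x)_{+}}\,ds$; applying this with $\lambda=\mu$, $t=1$ and using the isometry, $V^{I\!I}(\mu)=(H-\tfrac12)^{2}\iint_{[0,1]^{2}}\rho(|s_1-s_2|)\,ds_1\,ds_2$, where $\rho(|s_1-s_2|)$ is the $L^{2}$ inner product of $x\mapsto(s_1-x)_{+}^{H-\frac32}e^{-\mu(s_1-x)_{+}}$ and $x\mapsto(s_2-x)_{+}^{H-\frac32}e^{-\mu(s_2-x)_{+}}$, which by stationarity depends only on $|s_1-s_2|$; the Bessel identity with $\nu=H-\tfrac12$, $\beta=r$, $p=2\mu$ gives $\rho(r)=\tfrac{\Gamma(H-\frac12)}{\sqrt\pi}(2\mu)^{1-H}r^{H-1}K_{1-H}(\mu r)$. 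Collapsing the double integral over the square to $2\int_{0}^{1}(1-r)\rho(r)\,dr$ and using $(H-\tfrac12)^{2}\Gamma(H-\tfrac12)=(H-\tfrac12)\Gamma(H+\tfrac12)$, one obtains
\begin{equation*}
V^{I\!I}(\mu)=\frac{2(H-\tfrac12)\Gamma(H+\tfrac12)}{\sqrt\pi\,(2\mu)^{H-1}}\int_{0}^{1}(1-r)\,r^{H-1}K_{1-H}(\mu r)\,dr .
\end{equation*}

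It remains to evaluate this integral in closed form. Writing $K_{1-H}(z)=\tfrac{\pi}{2\sin(\pi H)}\big(I_{H-1}(z)-I_{1-H}(z)\big)$ and expanding $I_{\nu}(z)=\sum_{k\ge0}(z/2)^{2k+\nu}/(k!\,\Gamma(k+\nu+1))$, integrate term by term against $(1-r)r^{H-1}$ using $\int_{0}^{1}(1-r)r^{a}\,dr=1/[(a+1)(a+2)]$. This yields two power series in $\mu^{2}/4$; rewriting their coefficients through Pochhammer symbols and applying the reflection formula, the $I_{H-1}$-series equals $\tfrac{\Gamma(1-H)\Gamma(H+\frac12)}{\sqrt\pi H\,2^{2H}}$ times ${_2F_3}(\{1,H-\tfrac12\},\{1,H+1,H+\tfrac12\},\mu^{2}/4)$, while the $I_{1-H}$-series, after an index shift and the identity $(-\tfrac12)_{n}=-(2n-1)^{-1}(\tfrac12)_{n}$, equals $\tfrac{(1-2H)\Gamma(H)\Gamma(H+\frac12)}{\sqrt\pi}\mu^{-2H}$ times $1-{_2F_3}(\{1,-\tfrac12\},\{1-H,\tfrac12,1\},\mu^{2}/4)$; setting $\mu=\lambda|t|$ reproduces \eqref{eq:CtTFBIIMDef}. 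For $H\in(0,\tfrac12)$, where the reduced kernel above is not integrable at $r=0$, the same formula follows by analytic continuation in $H$ (both sides are holomorphic in $H$ on the strip $0<\mathrm{Re}\,H<1$), or by repeating the computation on the Fourier side via $\widehat{g^{I\!I}_{H,\lambda,t}}(\xi)=\Gamma(H+\tfrac12)(\lambda-i\xi)^{\frac12-H}(1-e^{-it\xi})/(i\xi)$ and Plancherel's theorem, which is valid for every $H>0$; at $H=\tfrac12$ one has $B^{I\!I}_{1/2,\lambda}$ equal to Brownian motion, consistently with $C^{2}_{t}\equiv1$. I expect the last step---matching the term-by-term integrated series to the precise ${_2F_3}$ parameter lists while tracking every $\Gamma$-prefactor, and covering $H<\tfrac12$---to be the only genuine difficulty; this is the computation carried out in \cite{Meerschaertsabzikar,SurgailisFarzadTFSMII}.
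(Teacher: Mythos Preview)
The paper itself does not prove this lemma: it is explicitly stated as a recall of results from \cite{Meerschaertsabzikar,SurgailisFarzadTFSMII}, with the reader referred there for details. So there is no in-paper argument to compare against.

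That said, your proof is sound. Part (a) is a clean and complete computation: the expansion of the square, the two diagonal Gamma integrals, and the evaluation of the cross term via the Bessel integral identity (with the $e^{\mu}$ from the identity cancelling the $e^{-\mu}$ prefactor) all check out and reproduce \eqref{eq:CtTFBMDef} exactly. Part (b) is a correct outline: the integration-by-parts reduction of $g^{I\!I}$ for $H>\tfrac12$, the identification of the inner product $\rho(r)$ via the same Bessel formula (using $K_{H-1}=K_{1-H}$), and the reduction of the square integral to $2\int_0^1(1-r)\rho(r)\,dr$ are all right. You are honest that the remaining work---expanding $K_{1-H}$ through $I_{\pm(1-H)}$, integrating the power series termwise, and repackaging the coefficients into the stated ${}_2F_3$'s---is the tedious part, and your handling of $H<\tfrac12$ by analytic continuation (or alternatively by the spectral computation via Plancherel, which is how \cite{SurgailisFarzadTFSMII} proceeds) is legitimate. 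The one thing worth tightening is to actually write out the coefficient matching for at least one of the two ${}_2F_3$ series rather than asserting it, since that is where errors in the $\Gamma$-prefactors would hide; but the strategy is correct.
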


\begin{prop}\label{lem:usefulrelations 1}
\begin{itemize}
\item[$(a)$] The {\rm TFBM} \eqref{eq:TFBMdef} with parameters $H>0$ and $\lambda>0$ satisfies
\begin{equation}\label{eq:varasy}
  \lim _{t\to  +\infty}\E[B^I_{H,\lambda}(t)]^2=\frac{2\Gamma{(2H)}}{(2\lambda)^{2H}}.
  \end{equation}
\item[$(b)$] The {\rm TFBMII} \eqref{eq:TFBMIIdef} with parameters $H>0$ and $\lambda>0$ satisfies
\begin{equation}\label{eq:varasyII}
  \lim _{t\to +\infty}\E\Big[\frac{B^{I\!I}_{H,\lambda}(t)}{\sqrt{t}}\Big]^2=\lambda^{1-2H}\Gamma^{2}\left(H+\frac{1}{2}\right).
  \end{equation}
  \end{itemize}
  \end{prop}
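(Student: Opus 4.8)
The plan is to reduce both limits to the explicit scalar formulas already recorded in Lemma~\ref{lem:usefulrelations}, supplemented in part $(b)$ by a short computation of the Fourier transform of the kernel \eqref{hdef0}, and then to pass to the limit. For part $(a)$ recall that $\E[B^{I}_{H,\lambda}(t)]^{2}=t^{2H}(C^{I}_{t})^{2}$ for $t>0$; substituting \eqref{eq:CtTFBMDef} and cancelling powers of $t$ gives
\[
\E[B^{I}_{H,\lambda}(t)]^{2}=\frac{2\Gamma(2H)}{(2\lambda)^{2H}}-\frac{2\Gamma(H+\tfrac12)}{\sqrt{\pi}}\,\frac{t^{H}}{(2\lambda)^{H}}\,K_{H}(\lambda t).
\]
By the large--argument asymptotics $K_{H}(z)\sim\sqrt{\pi/(2z)}\,e^{-z}$ recalled in Appendix~A, the second term is $O\!\big(t^{H-1/2}e^{-\lambda t}\big)\to0$ as $t\to+\infty$, which is \eqref{eq:varasy}. (Alternatively one may compute $\widehat{g^{I}_{H,\lambda,t}}(\omega)=\Gamma(H+\tfrac12)(\lambda-i\omega)^{-(H+1/2)}(e^{-it\omega}-1)$, apply Plancherel, remove the $\cos t\omega$ part by the Riemann--Lebesgue lemma, and use the beta integral $\int_{\R}(\lambda^{2}+\omega^{2})^{-(H+1/2)}d\omega=\sqrt{\pi}\,\Gamma(H)/(\Gamma(H+\tfrac12)\lambda^{2H})$ together with Legendre's duplication formula; but the Bessel route above is the shortest.)

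For part $(b)$ I would bypass the hypergeometric expression \eqref{eq:CtTFBIIMDef} and work directly from \eqref{hdef0}. Put $\phi(u)=u_{+}^{H-1/2}e^{-\lambda u_{+}}$ and use $\int_{0}^{\infty}e^{i\omega y}y^{H-1/2}e^{-\lambda y}\,dy=\Gamma(H+\tfrac12)(\lambda-i\omega)^{-(H+1/2)}$, so that $\widehat{\phi(t-\cdot)}(\omega)=e^{-it\omega}\Gamma(H+\tfrac12)(\lambda-i\omega)^{-(H+1/2)}$, $\widehat{\phi(-\cdot)}(\omega)=\Gamma(H+\tfrac12)(\lambda-i\omega)^{-(H+1/2)}$ and $\widehat{\phi(s-\cdot)}(\omega)=e^{-is\omega}\Gamma(H+\tfrac12)(\lambda-i\omega)^{-(H+1/2)}$. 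Summing the three pieces of \eqref{hdef0} and observing that the bracket $e^{-it\omega}-1+\lambda(1-e^{-it\omega})/(i\omega)$ equals $-(e^{-it\omega}-1)(\lambda-i\omega)/(i\omega)$, exactly one power of $(\lambda-i\omega)$ cancels, so that
\[
\widehat{g^{I\!I}_{H,\lambda,t}}(\omega)=-\,\Gamma(H+\tfrac12)\,\frac{e^{-it\omega}-1}{i\omega}\,(\lambda-i\omega)^{\frac12-H}.
\]
Hence, by Plancherel and $|e^{-it\omega}-1|^{2}=2(1-\cos t\omega)$,
\[
\E[B^{I\!I}_{H,\lambda}(t)]^{2}=\frac{\Gamma(H+\tfrac12)^{2}}{2\pi}\int_{\R}\frac{2(1-\cos t\omega)}{\omega^{2}}\,(\lambda^{2}+\omega^{2})^{\frac12-H}\,d\omega .
\]
This cancellation is precisely what the integration by parts in Definition~\ref{defTFBMII} achieves, and it is also what makes $g^{I\!I}_{H,\lambda,t}$ square integrable for every $H>0$.

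It then remains to divide by $t$, substitute $u=t\omega$, and let $t\to+\infty$:
\[
\frac{1}{t}\,\E[B^{I\!I}_{H,\lambda}(t)]^{2}=\frac{\Gamma(H+\tfrac12)^{2}}{2\pi}\int_{\R}\frac{2(1-\cos u)}{u^{2}}\,\Big(\lambda^{2}+\tfrac{u^{2}}{t^{2}}\Big)^{\frac12-H}du .
\]
As $t\to+\infty$ the integrand converges pointwise to $\tfrac{2(1-\cos u)}{u^{2}}\lambda^{1-2H}$, and for $t\ge1$ it is dominated by $\tfrac{2(1-\cos u)}{u^{2}}\max\{\lambda^{1-2H},(\lambda^{2}+u^{2})^{1/2-H}\}$, which is integrable because at infinity it decays like $u^{-2}$ (when $H\ge\tfrac12$) or like $u^{-1-2H}$ (when $0<H<\tfrac12$) and $H>0$. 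By dominated convergence, together with the classical value $\int_{\R}\tfrac{2(1-\cos u)}{u^{2}}\,du=2\pi$, the right--hand side tends to $\lambda^{1-2H}\Gamma(H+\tfrac12)^{2}$, which is \eqref{eq:varasyII}.

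The main obstacle is the careful bookkeeping of $\widehat{g^{I\!I}_{H,\lambda,t}}$: one must verify that the two boundary terms and the $\lambda$--integral in \eqref{hdef0} genuinely collapse to the single clean product above — equivalently, that the ``pre--integration--by--parts'' symbol $(\lambda-i\omega)^{1/2-H}$ is the correct one — and that all the Fourier transforms involved are legitimate in the regime $0<H<\tfrac12$, where the un--integrated kernel $u_{+}^{H-3/2}e^{-\lambda u_{+}}$ fails to be locally integrable in $x$. A minor nuisance is exhibiting the $t$--uniform majorant in that same regime, where $(\lambda^{2}+u^{2}/t^{2})^{1/2-H}$ is unbounded in $u$. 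If instead one insists on starting from \eqref{eq:CtTFBIIMDef}, note that after cancelling the repeated parameter both ${}_2F_3$'s reduce to ${}_1F_2$'s; inserting their known large--argument asymptotics (which contain exponentially growing terms of order $e^{\lambda t}$ that must cancel in the combination) and tracking the surviving algebraic terms recovers the same constant $\lambda^{1-2H}\Gamma(H+\tfrac12)^{2}$, but this route is computationally heavier and far less transparent than the Fourier argument.
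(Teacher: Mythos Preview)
Your proof is correct. For part $(a)$ you do exactly what the paper does: insert \eqref{eq:CtTFBMDef} and kill the second term via the large--argument asymptotics of $K_H$.

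For part $(b)$ you take a genuinely different route. The paper stays in the time domain: it expands $\E[B^{I\!I}_{H,\lambda}(t)]^{2}$ by splitting the $x$--integral at $0$ and $t$, arriving at a four--term decomposition $I_1+I_2+I_3+I_4$ (after an integration by parts in the $(-\infty,0)$ piece). Two of these terms have finite limits as $t\to\infty$ (for $I_1$ an auxiliary Lemma~\ref{lem:propo24} is proved in Appendix~A), $t^{-1}I_3(t)\to0$ by L'H\^{o}pital, and the leading contribution comes from $t^{-1}I_4(t)=\lambda^{2}t^{-1}\int_{0}^{t}\big(\int_{0}^{u}v^{H-1/2}e^{-\lambda v}dv\big)^{2}du\to\lambda^{1-2H}\Gamma(H+\tfrac12)^{2}$, again by L'H\^{o}pital. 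Your Fourier/Plancherel argument packages all of this into a single spectral identity: the cancellation of one factor $(\lambda-i\omega)$ is the frequency--side echo of the integration by parts that underlies Definition~\ref{defTFBMII}, and the substitution $u=t\omega$ turns the Fej\'er--type kernel $\tfrac{2(1-\cos t\omega)}{\omega^{2}}$ into an approximate identity whose total mass $2\pi$ delivers the constant directly. Your route is cleaner and explains \emph{why} the limiting constant is $\lambda^{1-2H}\Gamma(H+\tfrac12)^{2}$ (it is simply the spectral density at the origin), at the price of checking Fourier conventions and branches; the paper's route is more hands--on but needs the separate appendix lemma and several case--by--case limit computations. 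The ``obstacles'' you flag are not real issues here: $\phi(u)=u_{+}^{H-1/2}e^{-\lambda u_{+}}\in L^{1}\cap L^{2}$ for every $H>0$, so all the transforms you write are classical and no distributional interpretation of $u_{+}^{H-3/2}$ is needed.
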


\begin{proof}
The proof of part $(a)$ follows from the fact that
\begin{equation*} 
K_{\nu}(z)\sim\sqrt{\frac{\pi}{2}}\frac{e^{-z}}{\sqrt{z}}
\end{equation*}
as $z\to\infty$.
Part $(b)$:  Obviously, for any $t\ge 0$,
\begin{equation*}\begin{gathered}\E\left( B^{I\!I}_{H,\lambda}(t) \right)^2=\int_{-\infty}^0\left((t-x)^{H-\frac{1}{2}}e^{-\lambda (t-x)} - (-x)^{H - \frac{1}{2}} e^{-\lambda (-x)}
 + \lambda \int_{0}^{t} (s-x)^{H-\frac{1}{2}}e^{-\lambda(s-x)}\ ds\right)^2dx\\+
 \int_{0}^t\left((t-x)^{H-\frac{1}{2}}e^{-\lambda (t-x)}
 + \lambda \int_{x}^{t} (s-x)^{H-\frac{1}{2}}e^{-\lambda(s-x)}\ ds\right)^2dx\\=
 \int_{0}^\infty\left((t+x)^{H-\frac{1}{2}}e^{-\lambda (t+x)} - x^{H - \frac{1}{2}} e^{-\lambda x}
 + \lambda \int_{0}^{t} (s+x)^{H-\frac{1}{2}}e^{-\lambda(s+x)}\ ds\right)^2dx\\+
 \int_{0}^t\left(u^{H-\frac{1}{2}}e^{-\lambda u}
 + \lambda \int_{0}^{u} v^{H-\frac{1}{2}}e^{-\lambda v}\ dv\right)^2du
 =\left(H-1/2\right)^2\int_{0}^{\infty}\left(\int_{0}^{t}(s+x)^{H-3/2}e^{-\lambda(s+x)}ds\right)^2dx\\
 +\int_{0}^{t}u^{2H-1}e^{-2\lambda u}du+2\lambda \int_{0}^{t}u^{H-1/2}e^{-\lambda u}\int_{0}^{u}v^{H-1/2}e^{-\lambda v}dvdu\\
  +\lambda^2\int_{0}^{t}\left(\int_{0}^{u}v^{H-1/2}e^{-\lambda v}dv\right)^2du=:\sum_{k=1}^4 I_k(t).
\end{gathered}\end{equation*}
It is easy to see that  $$\lim_{t\rightarrow\infty}I_1(t)=I_1(\infty)
=\left(H-1/2\right)^2\int_{0}^{\infty}\left(\int_{0}^{\infty}(s+x)^{H-3/2}e^{-\lambda(s+x)}ds\right)^2dx,$$
and this integral is finite, see Lemma \ref{lem:propo24} in Appendix A. Further,
$$\lim_{t\rightarrow\infty}I_2(t)=I_2(\infty)=(2\lambda)^{-2H}\Gamma(2H),$$
and, according to L'H\^{o}pital's rule,
$$\lim_{t\rightarrow\infty}t^{-1}I_3(t)= 2\lambda \lim_{t\rightarrow\infty}t^{H-1/2}e^{-\lambda t}\int_{0}^{t}v^{H-1/2}e^{-\lambda v}dv=0.$$
Finally, again according to L'H\^{o}pital's rule,
$$\lim_{t\rightarrow\infty}t^{-1}I_4(t)=\lambda^2 \left(\int_{0}^{\infty}v^{H-1/2}e^{-\lambda v}dv\right)^2
=\lambda^{1-2H}\Gamma^2(H+1/2),$$
and the proof follows.
\end{proof}

\begin{rem}
Since TFBM is a Gaussian stochastic process with zero mean, it follows from \eqref{eq:varasy} that $B^{I}_{H,\lambda}(t)$ converges in law to a normal random variable with  zero mean and variance $2\Gamma(2H)(2\lambda)^{-2H}$ as $t\to\infty$, unlike fBm, whose variance diverges to infinity. In contrast, relation \eqref{eq:varasyII} shows that TFBMII is stochastically unbounded as $t\to\infty$.
\end{rem}

The following proposition gives the covariance structure of TFBM and TFBMII (see \cite{Meerschaertsabzikar, SurgailisFarzadTFSMII} for more details).

\begin{prop}\label{covTFBMTFBMII}
\begin{itemize}
\item[$(a)$] {\rm TFBM} \eqref{eq:TFBMdef} has the covariance function
\begin{equation*}
{\rm Cov}\left[B^{I}_{H,\lambda}(t),B^{I}_{H,\lambda}(s)\right]=\frac {1}{2}\left[C_t^{2}\left|t\right|^{2H}
+C_s^{2}\left|s\right|^{2H}
-C_{t-s}^{2}\left|t-s\right|^{2H}\right]
\end{equation*}
for any $s,t\in\rr$, where $C^{2}_{t}= (C^{I}_{t})^{2}$ is given by \eqref{eq:CtTFBMDef}.

\item[$(b)$] {\rm TFBMII} \eqref{eq:TFBMIIdef} has the covariance function
\begin{equation*}
{\rm Cov}\left[B^{I\!I}_{H,\lambda}(t),B^{I\!I}_{H,\lambda}(s)\right]=\frac {1}{2}\left[C_t^{2}\left|t\right|^{2H}
+C_s^{2}\left|s\right|^{2H}
-C_{t-s}^{2}\left|t-s\right|^{2H}\right]
\end{equation*}
for any $s,t\in\rr$, where $C^{2}_{t}= (C^{I\!I}_{t})^{2}$ is given by \eqref{eq:CtTFBIIMDef}.
\end{itemize}

\end{prop}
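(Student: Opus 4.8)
The plan is to obtain both identities at once from the two structural facts already recorded above. Write $X_{H,\lambda}$ for either $B^{I}_{H,\lambda}$ or $B^{I\!I}_{H,\lambda}$, and let $g_{H,\lambda,t}$ denote the corresponding moving-average kernel ($g^{I}_{H,\lambda,t}$ from Definition \ref{defTFBM}, resp. $g^{I\!I}_{H,\lambda,t}$ from \eqref{hdef0}). Then $X_{H,\lambda}$ is a centred Gaussian process with stationary increments and $X_{H,\lambda}(0)=0$ (the kernel vanishes identically at $t=0$), and, as explained right before Lemma \ref{lem:usefulrelations}, $\E[X_{H,\lambda}(t)^2]=|t|^{2H}C_t^2$ with $C_t^2$ given by \eqref{eq:CtTFBMDef} or \eqref{eq:CtTFBIIMDef} according to the case. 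Starting from the elementary identity for zero-mean variables
\[
\E[X_{H,\lambda}(t)X_{H,\lambda}(s)]=\tfrac12\Big(\E[X_{H,\lambda}(t)^2]+\E[X_{H,\lambda}(s)^2]-\E\big[(X_{H,\lambda}(t)-X_{H,\lambda}(s))^2\big]\Big),
\]
the whole task reduces to showing $\E[(X_{H,\lambda}(t)-X_{H,\lambda}(s))^2]=|t-s|^{2H}C_{t-s}^2$.

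First I would establish $X_{H,\lambda}(t)-X_{H,\lambda}(s)\stackrel{d}{=}X_{H,\lambda}(t-s)$. This is immediate from the stationarity of the increments asserted above, but it is also transparent at the level of kernels, which is the only place a short computation is needed. By the Wiener isometry, $\E[(X_{H,\lambda}(t)-X_{H,\lambda}(s))^2]=\int_{\rr}\big(g_{H,\lambda,t}(x)-g_{H,\lambda,s}(x)\big)^2\,dx$. For TFBM the difference of kernels is $e^{-\lambda(t-x)_+}(t-x)_+^{H-\frac{1}{2}}-e^{-\lambda(s-x)_+}(s-x)_+^{H-\frac{1}{2}}$, and the change of variables $y=x-s$ turns it into $g^{I}_{H,\lambda,t-s}(y)$; for TFBMII the same change of variables, after splitting $\int_0^t=\int_0^s+\int_s^t$ in the integral term of \eqref{hdef0} and shifting the dummy variable in the second piece, likewise reproduces $g^{I\!I}_{H,\lambda,t-s}$ (the two boundary terms match the boundary terms of $g^{I\!I}_{H,\lambda,t-s}$ directly). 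Since $\int_{\rr}$ is translation invariant, this gives $\E[(X_{H,\lambda}(t)-X_{H,\lambda}(s))^2]=\int_{\rr}g_{H,\lambda,t-s}(y)^2\,dy=\E[X_{H,\lambda}(t-s)^2]=|t-s|^{2H}C_{t-s}^2$.

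Finally I would substitute $\E[X_{H,\lambda}(u)^2]=|u|^{2H}C_u^2$ for $u\in\{t,s,t-s\}$ into the polarisation identity, obtaining exactly the stated formulas; here one uses that $C_u^2$ depends on $u$ only through $\lambda|u|$, hence $C_u^2=C_{|u|}^2=C_{-u}^2$, so the identities hold for all real $s,t$. There is no genuine obstacle: the only non-bookkeeping ingredient is the translation identity for the kernels (equivalently, the already-cited stationary-increments property), and the rest is the standard passage from the increment variance to the covariance for stationary-increment Gaussian processes, combined with the explicit formula for $C_t^2$ from Lemma \ref{lem:usefulrelations}.
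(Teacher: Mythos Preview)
Your proof is correct and is exactly the standard argument: polarisation plus stationary increments plus the variance formula from Lemma \ref{lem:usefulrelations}. The paper does not actually give a proof of this proposition; it merely states the result and refers the reader to \cite{Meerschaertsabzikar, SurgailisFarzadTFSMII} for details, so there is nothing to compare against beyond noting that your argument is the natural one and that the kernel-level verification of the translation identity you include is a slight elaboration of the already-cited stationary-increments property.
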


\subsection{Sample paths properties and local times of tempered fractional processes}
Now we prove the existence of local times for tempered fractional processes.
To start with, we prove the following result that  will be used in    this section.
\begin{thm}\label{lem:TFBM and TFBMII bounds}
  Let $X$ stand for  be a TFBM $B^I_{H,\lambda}$ from \eqref{eq:TFBMdef} or for a TFBMII  $B^{I\!I}_{H,\lambda}$ from  \eqref{eq:TFBMIIdef} both with $0<H<1$ and $\lambda>0$. Then there exist positive constants $C_1$ and $C_2$ such that
\begin{equation}\label{two-sided1}
C_1 \left|t-s\right|^{2H}\leq \mathbb{E}[|X(t)-X(s)|^{2}]\leq C_2 \left|t-s\right|^{2H}
\end{equation}
 for any $s,t\in [0,1]$.

\end{thm}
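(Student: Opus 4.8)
\noindent The plan is to reduce the two-sided bound to a uniform estimate on the normalized variance and then to control that variance by elementary integral manipulations. Since $X$ has stationary increments and $X(0)=0$ (the kernels in \eqref{eq:TFBMdef} and \eqref{hdef0} vanish at $t=0$), we have $\mathbb E[|X(t)-X(s)|^2]=\mathbb E[X(t-s)^2]$, and by the scaling relation \eqref{eq:scalingproperty}, exactly as recorded just before Lemma \ref{lem:usefulrelations}, this equals $|t-s|^{2H}C_{t-s}^2$, where $C_u^2=\mathbb E[(X_{H,\lambda|u|}(1))^2]=\int g_{H,\lambda|u|,1}(x)^2\,dx$ and $g=g^I$ (resp. $g=g^{II}$) when $X$ is TFBM (resp. TFBMII). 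For $s,t\in[0,1]$ one has $|t-s|\in[0,1]$, so, the case $t=s$ being trivial, it suffices to produce constants $0<C_1\le C_2<\infty$ with $C_1\le C_u^2\le C_2$ for all $u\in(0,1]$. Put $\mu:=\lambda u\in(0,\lambda]$ and note that $g_{H,\mu,1}$ is supported in $(-\infty,1)$.

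\noindent\emph{Lower bound.} Keep only the interval $x\in(0,1)$, where the ``$(-x)_+$'' term of each kernel vanishes and the remaining term(s) are nonnegative, so that for both processes $g_{H,\mu,1}(x)\ge(1-x)^{H-\frac12}e^{-\mu(1-x)}\ge e^{-\lambda}(1-x)^{H-\frac12}$. Hence $C_u^2\ge\int_0^1 g_{H,\mu,1}(x)^2\,dx\ge e^{-2\lambda}\int_0^1(1-x)^{2H-1}\,dx=\frac{e^{-2\lambda}}{2H}=:C_1>0$, uniformly in $u$.

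\noindent\emph{Upper bound.} Write $\int_{\mathbb R}=\int_0^1+\int_{-\infty}^0$. On $(0,1)$ the TFBM kernel is $\le(1-x)^{H-\frac12}$, while for TFBMII the extra term $\mu\int_x^1(s-x)^{H-\frac12}e^{-\mu(s-x)}\,ds\le\mu\int_0^1 w^{H-\frac12}\,dw=\frac{\mu}{H+1/2}\le\frac{\lambda}{H+1/2}$ is bounded; either way $\int_0^1 g_{H,\mu,1}^2$ is at most a constant independent of $u$. On $(-\infty,0)$ put $y=-x>0$; this is the crux, since the individual powers $(1+y)^{H-\frac12}$ are \emph{not} square integrable at $+\infty$ and one must exploit the cancellation inside the kernel together with the exponential factor. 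For TFBM, $g^I_{H,\mu,1}(-y)=e^{-\mu y}\big[(e^{-\mu}-1)(1+y)^{H-\frac12}+\big((1+y)^{H-\frac12}-y^{H-\frac12}\big)\big]$; using $|e^{-\mu}-1|\le\mu$ one checks that $\int_0^\infty\mu^2e^{-2\mu y}(1+y)^{2H-1}\,dy=O(\mu)+O(\mu^{2-2H})$, bounded for $\mu\le\lambda$ because $H<1$, while by the mean value theorem $|(1+y)^{H-\frac12}-y^{H-\frac12}|\le|H-\tfrac12|\,y^{H-\frac32}$, so the second summand contributes at most $\int_0^\infty\big((1+y)^{H-\frac12}-y^{H-\frac12}\big)^2\,dy$, which is finite precisely when $0<H<1$ (convergence at $0$ needs $H>0$, at $\infty$ needs $H<1$). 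For TFBMII, use the integration-by-parts identity already employed in the proof of Proposition \ref{lem:usefulrelations 1}(b): on $x<0$ the three terms of $g^{II}_{H,\mu,1}$ collapse to $(H-\tfrac12)\int_0^1(s+y)^{H-\frac32}e^{-\mu(s+y)}\,ds$, whose modulus is $\le|(1+y)^{H-\frac12}-y^{H-\frac12}|$, giving the same finite, $\mu$-independent bound. Summing the pieces yields $C_u^2\le C_2<\infty$ uniformly in $u\in(0,1]$, and substituting $C_1,C_2$ into $\mathbb E[|X(t)-X(s)|^2]=|t-s|^{2H}C_{t-s}^2$ completes the argument; the case $H=\tfrac12$ is degenerate (TFBMII is Brownian motion) and the bounds are then immediate.

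\noindent\emph{Main obstacle.} The only genuine difficulty is the uniform-in-$\mu\in(0,\lambda]$ control of the long-range integral $\int_{-\infty}^0 g_{H,\mu,1}^2$: a term-by-term estimate is impossible, so the algebraic cancellation among the kernel's summands must be combined with the exponential tempering, and $H<1$ is used twice---for integrability of the $y^{2H-3}$ tail and for boundedness of $\mu^{2-2H}$ as $\mu\downarrow0$. Alternatively the bounds can be read off the closed forms \eqref{eq:CtTFBMDef} and \eqref{eq:CtTFBIIMDef} via the small-argument asymptotics of $K_H$ and ${_2F_3}$---the leading singular terms $\propto(\lambda|t|)^{-2H}$ cancel and leave a finite positive limit as $t\downarrow0$---but the integral estimates above are manifestly uniform in the parameters.
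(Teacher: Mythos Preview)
Your argument is correct and is in fact tidier than the paper's. The essential difference is the use of scaling: you reduce to showing that $u\mapsto C_u^2=\E[X_{H,\lambda u}(1)^2]$ is bounded above and below on $(0,1]$, whereas the paper works directly with the increment $\E|X(t)-X(s)|^2$ and its moving-average kernel on $(-\infty,t)$. Concretely, for TFBMII the paper splits into the two regimes $H>1/2$ and $H<1/2$, and in the latter case decomposes the long-range integral $I_1$ into three cross-terms $I_{11}+I_{12}+I_{13}$ that are bounded separately (with $I_{11}$ deferred to \cite{Meerschaertsabzikar2}); your integration-by-parts identity $g^{I\!I}_{H,\mu,1}(-y)=(H-\tfrac12)\int_0^1(s+y)^{H-3/2}e^{-\mu(s+y)}\,ds$ collapses all of this into the single $\mu$-free bound $|(1+y)^{H-1/2}-y^{H-1/2}|$, valid for every $H\in(0,1)$ at once. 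For TFBM the paper simply cites \cite[Lemma 4.2]{Meerschaertsabzikar2}, while your decomposition $g^I=e^{-\mu y}\big[(e^{-\mu}-1)(1+y)^{H-1/2}+((1+y)^{H-1/2}-y^{H-1/2})\big]$ gives a short self-contained estimate; the observation that $\int_0^\infty \mu^2 e^{-2\mu y}(1+y)^{2H-1}\,dy=O(\mu)+O(\mu^{2-2H})$ is exactly where $H<1$ enters, and this is where the paper's argument (hidden in the cited lemma) would also need it. The lower bounds are essentially identical in spirit---both drop everything except the contribution from the ``short'' interval where the $(-x)_+$ term vanishes. What your route buys is uniformity: no case split in $H$, no reliance on an external reference for TFBM, and the extension to a general interval $[0,T]$ (the paper's Remark \ref{rem2}(ii)) is immediate by replacing $\lambda$ with $\lambda T$ in the range of $\mu$.
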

\begin{rem} \label{rem2}
(i) Inequalities mean that both processes, TFBM  and TFBMII,  are quasi-h\'{e}lices, according to geometric terminology of J.-P. Kahane, see \cite{Kahane}.\\
(ii) Theorem \ref{lem:TFBM and TFBMII bounds} holds for any fixed interval $[0,T]$ with constants $C_i$   depending on $T$.
\end{rem}
\begin{proof}
The proof for TFBM is similar to that of Lemma 4.2 in \cite{Meerschaertsabzikar2} and hence can be omitted.
To prove \eqref{two-sided1} for TFBMII, we use its  moving average representation   \eqref{eq:TFBMIIdef} to write
\begin{equation}\label{eq:incrementsTFBMII2}
\begin{split}
\mathbb{E}\Big| B^{I\!I}_{H,\lambda}(t) - B^{I\!I}_{H,\lambda}(s) \Big|^2 &=   \int_{-\infty}^{s}
\Big[(t-x)^{H-\frac{1}{2}}e^{-\lambda(t-x)} - (s-x)^{H-\frac{1}{2}}e^{-\lambda(s-x)}\\
& \quad + \lambda \int_s^t (w-x)^{H-\frac{1}{2}}e^{-\lambda(w-x)}\ dw   \Big]^2 dx   \\
&\quad +  \int_s^t \Big[ (t-x)^{H-\frac{1}{2}}e^{-\lambda(t-x)} + \lambda\int_x^t (w-x)^{H-\frac{1}{2}}e^{-\lambda(w-x)}\ dw\Big]^2 dx \\
&= I_1 + I_2.
\end{split}
\end{equation}
Now, let $\frac{1}{2}< H <1$.  Then $I_{1}= I_{1, H>\frac{1}{2}}$ can be written as
\begin{equation}\label{I1TFBMIIHGRQ12}
I_{1, H>\frac{1}{2}} = {\left(H-\frac{1}{2}\right)^2 }\int_{-\infty}^s \Big[ \int_{s}^t  (w-x)^{H-\frac{3}{2}} e^{-\lambda(w-x)} \ dw \Big]^2 dx.
\end{equation}
Obviously,
\begin{equation}\label{I11TFBMIIHGRQ12}
\begin{split}
I_{ 1, H>\frac{1}{2} } &\leq \left(H-\frac{1}{2}\right)^2\int_{-\infty}^{s}  \Big( (t-x)^{H-\frac{1}{2}} - (s-x)^{H-\frac{1}{2}}  \Big)^{2}\ dx\\
& = \left(H-\frac{1}{2}\right)^2\int_{0}^{\infty} \Big( ( h+u)^{H-\frac{1}{2}} - u^{H-\frac{1}{2}}  \Big)^{2}\ du \quad (h:=t-s)\\
 &=\left(H-\frac{1}{2}\right)^2h^{2H} \int_{0}^{\infty} \Big( (1+ u)^{H-\frac{1}{2}} - u^{H-\frac{1}{2}}  \Big)^{2}\ du\\
&=C h^{2H} = C(t-s)^{2H},
\end{split}
\end{equation}
where we used the fact that $\int_{0}^{\infty} \Big( (1+ u)^{H-\frac{1}{2}} - u^{H-\frac{1}{2}}  \Big)^{2}\ du$ is finite, see, e.g.,  \cite[Theorem 1.3.1]{Mishura}.  Now, let's move on to consider  $I_2=I_{ 2, H>\frac{1}{2} }$,  and  get that
\begin{equation}\label{I2TFBMIIHGRQ12}
\begin{split}
I_{ 2, H>\frac{1}{2} } &= \int_s^t \Big[ (t-x)^{H-\frac{1}{2}}e^{-\lambda(t-x)} + \lambda\int_x^t (w-x)^{H-\frac{1}{2}}e^{-\lambda(w-x)}\ dw\Big]^2 dx\\
&= \left(H-\frac{1}{2}\right)^2\int_s^t \Big[ \int_x^t (w-x)^{H-\frac{3}{2}}e^{-\lambda(w-x)}\ dw\Big]^{2}\ dx   \leq C (t-s)^{2H}.
\end{split}
\end{equation}
We conclude  from \eqref{eq:incrementsTFBMII2}--\eqref{I2TFBMIIHGRQ12} that
\begin{equation*}
\mathbb{E}\left| B^{I\!I}_{H,\lambda}(t) - B^{I\!I}_{H,\lambda}(s) \right|^2 \leq C|t-s|^{2H}
\end{equation*}
provided $\frac{1}{2}< H < 1$. As the next step, we find an upper bound for the second moments of the increments of TFBMII for  $0<H<\frac{1}{2}$. Recall from \eqref{eq:incrementsTFBMII2} that in this case too we have
\begin{equation*}\label{eq:incrementsTFBMIIsecond}
\mathbb{E}\left| B^{I\!I}_{H,\lambda}(t) - B^{I\!I}_{H,\lambda}(s) \right|^2  = I_1 + I_2.
\end{equation*}
For $0<H<\frac{1}{2}$, we start with $I_2$ and write
\begin{equation*}
\begin{split}
I_{2}&= I_{2,H<\frac{1}{2}} = \int_s^t \left[ (t-x)^{H-\frac{1}{2}}e^{-\lambda(t-x)} + \lambda\int_x^t (w-x)^{H-\frac{1}{2}}e^{-\lambda(w-x)}\ dw\right]^2 dx\\
& = \int_s^t (t-x)^{2H-1} e^{-2\lambda(t-x)}\  dx
 + \lambda^2 \int_s^t \Big[ \int_{x}^{t} (w-x)^{H-\frac{1}{2}}e^{-\lambda(w-x)} \ dw \Big]^{2}\  dx\\
& \quad + 2\lambda \int_s^t (t-x)^{H-\frac{1}{2}}e^{-\lambda(t-x)} \int_x^t (w-x)^{H-\frac{1}{2}}e^{-\lambda(w-x)}\ dw dx\\
&\le   \int_s^t (t-x)^{2H-1}   dx
 + \lambda^2 \int_s^t \Big[ \int_{x}^{t} (w-x)^{H-\frac{1}{2}}  \ dw \Big]^{2}\  dx\\
&\quad + 2\lambda \int_s^t (t-x)^{H-\frac{1}{2}}  \int_x^t (w-x)^{H-\frac{1}{2}} \ dw dx\leq C|t-s|^{2H}.
\end{split}
\end{equation*}
Next, we consider $I_{1}=I_{1, H<\frac{1}{2}}$ and decompose it into three terms as follows:
\begin{equation*}
\begin{split}
I_1&= I_{1, H<\frac{1}{2}} = \int_{-\infty}^{s}
\left( (t-x)^{H-\frac{1}{2}}e^{-\lambda(t-x)} - (s-x)^{H-\frac{1}{2}}e^{-\lambda(s-x)}\right)^2\ dx\\
& \quad + \lambda^2 \int_{-\infty}^{s}\left( \int_s^t (w-x)^{H-\frac{1}{2}}e^{-\lambda(w-x)}\ dw   \right)^2 dx   \\
& \quad + 2\lambda \int_{-\infty}^{s}
\Bigg( (t-x)^{H-\frac{1}{2}}e^{-\lambda(t-x)} - (s-x)^{H-\frac{1}{2}}e^{-\lambda(s-x)}\Bigg)
\Bigg( \int_s^t (w-x)^{H-\frac{1}{2}}e^{-\lambda(w-x)}\ dw   \Bigg) \ dx\\
&=: I_{11, H<\frac{1}{2}} + I_{12, H<\frac{1}{2}} + I_{13, H<\frac{1}{2}}.
\end{split}
\end{equation*}
According to \cite[Lemma 4.2]{Meerschaertsabzikar2},
\begin{equation}\label{equat:11}
I_{11, H<\frac{1}{2}} \leq C |t-s|^{2H}
\end{equation}
provided $s,t\in (0,1)$. Let as before, $h=t-s$. Taking into account that  for any $0\le y\le z$ and $H<1/2$ we have $z^{H+\frac{1}{2}}-y^{H+\frac{1}{2}}\le (z-y)^{H+\frac{1}{2}} $, the term $I_{12, H<\frac{1}{2}}$ can be rewritten as
\begin{equation}\label{equat:12}
\begin{split}
I_{12, H<\frac{1}{2}} &= \lambda^2 \int_{-\infty}^{s}\Bigg( \int_s^t (w-x)^{H-\frac{1}{2}}e^{-\lambda(w-x)}\ dw   \Bigg)^2 dx  \\
&\leq \lambda^2 \Big(H+\frac{1}{2}\Big)^{-2} \int_{-\infty}^{s} e^{-2\lambda(s-x)} \Bigg( (t-x)^{H+\frac{1}{2}} - (s-x)^{H+\frac{1}{2}}  \Bigg)^2 dx \\
&= \lambda^2 \Big(H+\frac{1}{2}\Big)^{-2} \int_{0}^{\infty} e^{-2\lambda u} \Bigg( (u+h)^{H+\frac{1}{2}} - u^{H+\frac{1}{2}}  \Bigg)^2 du \\
 &\le \lambda^2 \Big(H+\frac{1}{2}\Big)^{-2} h^{2H+1} \int_{0}^{\infty} e^{-2\lambda u} du
 \leq Ch^{2H+1}\leq Ch^{2H},
\end{split}
\end{equation}
where the last inequality is due to $0<h=t-s<1$. Next, for $I_{13, H<\frac{1}{2}}$  we have
\begin{equation}\label{I13forHleq12}
\begin{split}
I_{13, H<\frac{1}{2}}&= 2\lambda \int_{-\infty}^{s}
\Bigg( (t-x)^{H-\frac{1}{2}}e^{-\lambda(t-x)} - (s-x)^{H-\frac{1}{2}}e^{-\lambda(s-x)}\Bigg)\\
& \quad \times\Bigg( \int_s^t (w-x)^{H-\frac{1}{2}}e^{-\lambda(w-x)}\ dw   \Bigg) \ dx \\
&\leq 2\lambda \Big(H+\frac{1}{2}\Big)^{-1} \int_{-\infty}^{s}
(t-x)^{H-\frac{1}{2}}e^{-\lambda(t-x)}\left[(t-x)^{H+\frac{1}{2}} - (s-x)^{H+\frac{1}{2}} \right]\ dx\\
& \quad +2\lambda \Big(H+\frac{1}{2}\Big)^{-1} \int_{-\infty}^{s}
(s-x)^{H-\frac{1}{2}}e^{-\lambda(s-x)}\left[(t-x)^{H+\frac{1}{2}} - (s-x)^{H+\frac{1}{2}} \right]\ dx\\
&=: I_{131, H<\frac{1}{2}} + I_{132, H<\frac{1}{2}}.
\end{split}
\end{equation}
Note that the function $(1+ u)^{H-\frac{1}{2}}\left[(1+u)^{H+\frac{1}{2}} - u^{H+\frac{1}{2}} \right]$ is bounded on $[0,\infty)$, and continue with $I_{131, H<\frac{1}{2}}$ changing a variable $s-x=hu$:
\begin{equation}\label{I131}
\begin{split}
I_{131, H<\frac{1}{2}} &= 2\lambda \Big(H+\frac{1}{2}\Big)^{-1}   \int_{-\infty}^{s}
(s+h-x)^{H-\frac{1}{2}}e^{-\lambda(s+h-x)}\left[(s+h-x)^{H+\frac{1}{2}} - (s-x)^{H+\frac{1}{2}} \right]\ dx\\
 &=2\lambda \Big(H+\frac{1}{2}\Big)^{-1} h^{2H+1} e^{-\lambda h} \int_{0}^{\infty}
(1+ u)^{H-\frac{1}{2}} e^{-\lambda hu} \left[(1+u)^{H+\frac{1}{2}} - u^{H+\frac{1}{2}} \right]\ du\\
&\leq Ch^{2H+1}\int_{0}^{\infty}e^{-\lambda hu}du \leq Ch^{2H}.
\end{split}
\end{equation}
 For $I_{132, H<\frac{1}{2}}$ corresponding function $u^{H-\frac{1}{2}}\Bigg[(1+u)^{H+\frac{1}{2}} - u^{H+\frac{1}{2}} \Bigg]$ is not bounded at zero, therefore we change a bit the transformations:
\begin{equation}\label{I132}
\begin{split}
I_{132} &= 2\lambda \Big(H+\frac{1}{2}\Big)^{-1} \int_{-\infty}^{s}
(s-x)^{H-\frac{1}{2}}e^{-\lambda(s-x)}\Bigg[(s+h-x)^{H+\frac{1}{2}} - (s-x)^{H+\frac{1}{2}} \Bigg]\ dx\\
&= 2\lambda h^{2H+1} \Big(H+\frac{1}{2}\Big)^{-1}  \int_{0}^{\infty}
u^{H-\frac{1}{2}} e^{-\lambda hu} \Bigg[ (1+ u)^{H+\frac{1}{2}} - u^{H+\frac{1}{2}} \Bigg]\ du \\
&\leq C h^{2H+1}\left(\int_{0}^{1}+\int_{1}^{\infty} \right)\leq C \left(h^{2H+1}+h^{2H}\right)\leq Ch^{2H},
\end{split}
\end{equation}
where the last inequality is due to $0<h=t-s<1$.
From \eqref{I13forHleq12}-- \eqref{I132} we have
\begin{equation*}
I_{13, H<\frac{1}{2}}\leq C|t-s|^{2H},
\end{equation*}
and together with \eqref{equat:11} and \eqref{equat:12} it gives us the upper bound
\begin{equation*}
I_{1}= I_{11, H<\frac{1}{2}}+ I_{12, H<\frac{1}{2}} + I_{13, H<\frac{1}{2}} \leq C|t-s|^{2H}.
\end{equation*}
Therefore,
\begin{equation*}
\mathbb{E}\Big| B^{I\!I}_{H,\lambda}(t) - B^{I\!I}_{H,\lambda}(s) \Big|^2 \leq C|t-s|^{2H}
\end{equation*}
provided $0< H < \frac{1}{2}$. So, we established  that the right-hand side of \eqref{two-sided1} holds for any
for any $0<H<1$. Next, for $0<H<1$, let us prove that
\begin{equation*}
\mathbb{E}\Big| B^{I\!I}_{H,\lambda}(t) - B^{I\!I}_{H,\lambda}(s) \Big|^2 \geq C|t-s|^{2H}.
\end{equation*}
 In order to obtain the required lower bound, it suffices to note that formula  \eqref{eq:incrementsTFBMII2}
allows us to write for $s,t\in[0,1]$:
\begin{equation*}
\begin{split}
\mathbb{E}\Big| B^{I\!I}_{H,\lambda}(t) - B^{I\!I}_{H,\lambda}(s) \Big|^2 &\geq \int_s^t \Big[ (t-x)^{H-\frac{1}{2}}e^{-\lambda(t-x)}\Big]^2 dx\\ &\geq e^{-2\lambda(t-s)}\int_s^t   (t-x)^{2H-1}  dx
 \geq   C (t-s)^{2H},
\end{split}
\end{equation*}
and   the  proof   is complete.
\end{proof}

Next, we prove the existence of local times for TFBM and TFBMII. We will also show that these tempered fractional processes are locally nondeterministic on any open interval. Suppose $X=\{X(t)\}_{t\geq 0}$ is a real-valued separable random process with Borel sample functions. The random Borel measure
\begin{equation*}
\mu_{B}(A)=\int_{  B}I\{X(s)\in A\}\ ds
\end{equation*}
defined for Borel sets $A\subseteq {\mathbb R},\;B\subseteq {\mathbb R}^{+}$ is called the occupation measure of $X$ on $B$.  If $\mu_{B}$ is absolutely continuous with respect to Lebesgue measure on $\mathbb{R}^{+}$, then the Radon-Nikodym derivative of $\mu_{B}$ with respect to Lebesgue measure is called the local time of $X$ on $B$, denoted by $L(B,x)$.  See Boufoussi et al.\ \cite{BoufoussiDozziGuerbaz} for more detail.  For brevity, we denote   $L(t,x):=L([0,t],x)$.

\begin{prop}\label{prop:tfbmIIlocaltime}
Let $X$ be either TFBM \eqref{eq:TFBMdef} or TFBMII \eqref{eq:TFBMIIdef}. Then  for $0< H < 1$ and $\lambda>0$, $X$ has a local time $L(t,x)$ that is continuous in $t$ for a.e.\ $x\in\mathbb{R}$, and square integrable with respect to $x$.
\end{prop}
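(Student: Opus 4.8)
The strategy is to reduce the statement to the classical Fourier-analytic criterion for occupation densities (Berman; see, e.g., \cite{BoufoussiDozziGuerbaz}), the only probabilistic input being the two-sided estimate of Theorem~\ref{lem:TFBM and TFBMII bounds}. Fix $t>0$ and write $\sigma^2(u,v):=\mathbb{E}\big[|X(u)-X(v)|^2\big]$ for $u,v\in[0,t]$. Since $X$ is a centered Gaussian process, its formal occupation density $L(t,x)$ has spatial Fourier transform $\widehat L(t,\xi)=\int_0^t e^{i\xi X(u)}\,du$, so Plancherel's identity, Fubini's theorem, the Gaussian identity $\mathbb{E}[e^{i\xi(X(u)-X(v))}]=e^{-\xi^2\sigma^2(u,v)/2}$ and $\int_{\mathbb R}e^{-\xi^2 a/2}\,d\xi=\sqrt{2\pi/a}$ together give
\[
\mathbb{E}\int_{\mathbb R}|L(t,x)|^2\,dx=\frac{1}{2\pi}\int_{\mathbb R}\int_0^t\int_0^t \mathbb{E}\big[e^{i\xi(X(u)-X(v))}\big]\,du\,dv\,d\xi=\frac{1}{\sqrt{2\pi}}\int_0^t\int_0^t\frac{du\,dv}{\sigma(u,v)}.
\]
Hence it suffices to check that this last double integral is finite.

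By Theorem~\ref{lem:TFBM and TFBMII bounds} together with Remark~\ref{rem2}(ii), which extends the bound to an arbitrary interval $[0,t]$, there is $C_1=C_1(t)>0$ with $\sigma^2(u,v)\ge C_1|u-v|^{2H}$ for all $u,v\in[0,t]$, so that $\sigma(u,v)^{-1}\le C_1^{-1/2}|u-v|^{-H}$. Since $0<H<1$,
\[
\int_0^t\int_0^t|u-v|^{-H}\,du\,dv=\frac{2\,t^{2-H}}{(1-H)(2-H)}<\infty ,
\]
whence $\mathbb{E}\int_{\mathbb R}|L(t,x)|^2\,dx<\infty$. By the occupation-density theory this forces the occupation measure $\mu_{[0,t]}$ of $X$ to be almost surely absolutely continuous on $\mathbb R$ for every $t>0$; equivalently, $X$ admits a local time $L(t,x)$ which is almost surely square integrable in $x$.

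To get continuity in $t$, I would run the same computation on the increment: $L(t,x)-L(s,x)$ has spatial Fourier transform $\int_s^t e^{i\xi X(u)}\,du$, so for $0\le s<t\le T$,
\[
\mathbb{E}\int_{\mathbb R}|L(t,x)-L(s,x)|^2\,dx=\frac{1}{\sqrt{2\pi}}\int_s^t\int_s^t\frac{du\,dv}{\sigma(u,v)}\le C\int_s^t\int_s^t|u-v|^{-H}\,du\,dv=C'\,|t-s|^{2-H}.
\]
Since $2-H>1$, the Kolmogorov--Chentsov continuity theorem applied to the $L^2(\mathbb R,dx)$-valued process $t\mapsto L(t,\cdot)$ yields a modification that is H\"older continuous from $[0,T]$ into $L^2(\mathbb R,dx)$. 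On the other hand, for a.e.\ $x$ the map $t\mapsto L(t,x)$ is nondecreasing, being the density of the positive measure $A\mapsto\int_0^t\mathbf{1}_{A}(X(u))\,du$. Combining this monotonicity with the $L^2(dx)$-continuity in $t$ — pass to a countable dense set of times, extract almost everywhere convergent subsequences, and use $\int_A\big(L(t,x)-L(s,x)\big)\,dx\le |A|^{1/2}\|L(t,\cdot)-L(s,\cdot)\|_{L^2}\to 0$ as $t\downarrow s$ to exclude jumps on sets of positive $x$-measure — one obtains, as in \cite{BoufoussiDozziGuerbaz}, that for a.e.\ $x$ the function $t\mapsto L(t,x)$ is continuous. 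Both $X=B^I_{H,\lambda}$ and $X=B^{I\!I}_{H,\lambda}$ are covered because Theorem~\ref{lem:TFBM and TFBMII bounds} applies to each.

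The genuinely delicate point is not the moment bounds — these are immediate from Theorem~\ref{lem:TFBM and TFBMII bounds} and the elementary fact that $\int_0^t\int_0^t|u-v|^{-H}\,du\,dv<\infty$ for $H<1$ — but the final bookkeeping step: upgrading $L^2(dx)$-valued continuity in $t$ to pointwise-in-$x$ continuity for almost every $x$, where the monotonicity of the local time in $t$ is what saves the day.
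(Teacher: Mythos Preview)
Your approach is correct and takes a genuinely different route from the paper's. The paper does not redo any Fourier analysis: it simply invokes \cite[Theorem~3.1]{BoufoussiDozziGuerbaz}, which delivers the full conclusion under the single hypothesis $(\mathcal H)$ that $\big|\mathbb E[\exp(i\kappa(X(t)-X(s))/|t-s|^H)]\big|\le\psi(\kappa)$ for some $\psi\in L^1(\mathbb R)$, and then verifies $(\mathcal H)$ in one line from the Gaussian characteristic function together with the lower bound in Theorem~\ref{lem:TFBM and TFBMII bounds}. You instead unpack what lies behind that black box: Berman's integral criterion $\int_0^t\!\int_0^t\sigma(u,v)^{-1}\,du\,dv<\infty$ for the existence and $L^2$-integrability of the occupation density, followed by a Kolmogorov--Chentsov estimate on the $L^2(\mathbb R)$-valued map $t\mapsto L(t,\cdot)$, and then the upgrade to continuity in $t$ for a.e.\ $x$. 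Both routes rest on exactly the same variance lower bound; yours is more explicit and self-contained, the paper's is a two-line citation.

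One caution on your final step. The inequality $\int_A(L(t,x)-L(s,x))\,dx\le|A|^{1/2}\|L(t,\cdot)-L(s,\cdot)\|_{L^2}\to 0$ only excludes a jump at a \emph{fixed} time $s$ on a set of positive $x$-measure; the exceptional $x$-null set depends on $s$, and passing to a countable dense set of times does not by itself reach the uncountably many potential jump locations of a monotone function. The standard way to close this uses your Kolmogorov bound more sharply: summing $\mathbb E\|L(kT2^{-n},\cdot)-L((k-1)T2^{-n},\cdot)\|_{L^2}^2\le C(T2^{-n})^{2-H}$ over $k$ and then over $n$ gives a convergent series, so by Fatou the dyadic quadratic variation of the monotone map $t\mapsto L(t,x)$ vanishes for a.e.\ $x$, forcing all jumps to vanish simultaneously. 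You correctly flag this as the delicate point and defer to \cite{BoufoussiDozziGuerbaz}; it is worth knowing that this is exactly where the work hides.
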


\begin{proof}
It follows from Boufoussi et al.\  in \cite[Theorem 3.1]{BoufoussiDozziGuerbaz} that a stochastic process $X=\{X(t)\}_{t\in[0,T]}$ has a local time $L(t,x)$ that is continuous in $t$ for a.e.\ $x\in\mathbb{R}$, and square integrable with respect to $x$, if $X$ satisfies the following condition

$(\mathcal{H})$: There exist positive numbers $(\rho_{0},H)\in (0,\infty)\times (0,1)$ and a positive function $\psi\in L^{1}(\mathbb{R})$ such that for all $\kappa\in\mathbb{R}, T>0, t,s\in[0,T], 0<|t-s|<\rho_{0}$ we have
\begin{equation*}
\Bigg|\mathbb{E}\left[\exp\Big(i\kappa\frac{X(t)-X(s)}{|t-s|^{H}}\Big)\right]\Bigg|\leq \psi(\kappa).
\end{equation*}
\vskip10pt
\noindent Apply Lemma \ref{lem:TFBM and TFBMII bounds}, more precisely, the left-hand side of \eqref{two-sided1} and Remark \ref{rem2}, to get
\begin{equation*}
\begin{split}
\mathbb{E}\left[\exp\Big(i\kappa\frac{B^{I}_{H,\lambda}(t)-B^{I}_{H,\lambda}(s)}{|t-s|^{H}}\Big)\right]&=
\exp\Big(-|\kappa|^{2}\frac{\mathbb{E}[|{B}^{I}_{H,\lambda}(t)-{B}^{I}_{H,\lambda}(s)|^{2}]}{|t-s|^{2H}}\Big)\\
&\leq \exp\Big(-|\kappa|^{2}C\Big):=\psi(\kappa)
\end{split}
\end{equation*}
where the function $\psi\in L^{1}(\mathbb{R},d\kappa)$.  Hence TFBM satisfies condition $(\mathcal{H})$. Along the same line, using Lemma \ref{lem:TFBM and TFBMII bounds}, namely,  the left-hand side of \eqref{two-sided1} and Remark \ref{rem2}, we can see that
TFBMII satisfies condition $(\mathcal{H})$. Therefore, both $X=B^{I}_{H,\lambda}$ and  $X=B^{I\!I}_{H,\lambda}$ have the   local time $L(t,x)$ that is continuous in $t$ for a.e.\ $x\in\mathbb{R}$. The proof is completed.
\end{proof}
In the next step, we prove that tempered fractional processes are locally nondeterministic on any open interval $(0,T)$,     $T>0$. Recall that a zero mean Gaussian process $\{X(t)\}_{t\in\mathbb{R}}$ is \textit{locally nondeterministic} (LND) on some interval $\mathbb{T}=(a,b)$ if $ X$ satisfies condition $(A)$ consisting of the following three assumptions:

\begin{itemize}\label{condition A}
\item[$(A)$  $(i)$] $\mathbb{E}[ X^2(t) ] >0$ for all $t\in \mathbb{T}$;
\item[$(ii)$] $\mathbb{E}[(X(t)-X(s))^2]>0$ for all $t,s\in \mathbb{T}$ sufficiently close;
\item[$(iii)$] for any $m\geq 2$,
\begin{equation}\label{BermanVM}
\liminf_{\epsilon\downarrow 0}V_{m}=\frac{{\text{Var}}\{X(t_m)-X(t_{m-1})|X(t_1), \ldots, X(t_{m-1})\}}{{\text{Var}}\{X(t_m)-X(t_{m-1})\}}>0,
\end{equation}
where the $\liminf$ is taken over distinct, ordered $t_1<t_2<\ldots<t_m\in \mathbb{T}$ with $|t_1-t_m|<\epsilon$.
\end{itemize}

\begin{rem}
According to Berman \cite{Berman}, the ratio $V_{m}$ in assumption $(iii)$ is called a relative linear prediction error and is always between $0$ and $1$. If the ratio is bounded away from zero as $|t_{1}-t_{m}|\to 0$, then we can approximate $X(t_m)$ in the ${\mathbb{L}}^{2}$ norm by the most recent value $X(t_{m-1})$ with the same order of error as by the set of values $X(t_{1}),\ldots,X(t_{m-1})$.  We refer the reader to \cite{Berman} for more details.
\end{rem}

\begin{thm}\label{prop:TFBMIILND}
Let $X$ be either TFBM \eqref{eq:TFBMdef} or TFBMII \eqref{eq:TFBMIIdef}. Then  for any $0<H<1$ and  $\lambda>0$, $X$ is LND on every interval $(0,T)$ for $0<T<\infty$.
\end{thm}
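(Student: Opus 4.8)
The plan is to verify the three requirements $(i)$--$(iii)$ of condition $(A)$ directly, using the moving average representations \eqref{eq:TFBMdef} and \eqref{eq:TFBMIIdef} together with Theorem \ref{lem:TFBM and TFBMII bounds} (extended to the interval $[0,T]$ as in Remark \ref{rem2}). Requirements $(i)$ and $(ii)$ are immediate: for $t\in(0,T)$ we have $X(0)=0$, so the left inequality in \eqref{two-sided1} gives $\E[X^2(t)]=\E|X(t)-X(0)|^2\ge C_1 t^{2H}>0$, and for distinct $s,t\in(0,T)$ the same inequality gives $\E[(X(t)-X(s))^2]\ge C_1|t-s|^{2H}>0$. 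The bulk of the work is requirement $(iii)$, i.e., a lower bound on the relative prediction error $V_m$ in \eqref{BermanVM} that is uniform over all admissible configurations.

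For $(iii)$, write $X(t)=I(g_{H,\lambda,t})$ with $g_{H,\lambda,t}$ equal to $g^{I}_{H,\lambda,t}$ (for TFBM) or $g^{I\!I}_{H,\lambda,t}$ (for TFBMII). Since $\{X(t)\}$ is centered Gaussian and $I$ is a linear isometry from $L^2(\R,dx)$ onto the first Wiener chaos, the conditional variance in the numerator of $V_m$ equals the squared $L^2$-distance from the increment kernel to the span of the conditioning kernels:
\begin{equation*}
\Var\{X(t_m)-X(t_{m-1})\mid X(t_1),\dots,X(t_{m-1})\}
=\inf_{a_1,\dots,a_{m-1}\in\R}\Big\|g_{H,\lambda,t_m}-g_{H,\lambda,t_{m-1}}-\sum_{j=1}^{m-1}a_j\,g_{H,\lambda,t_j}\Big\|_{L^2(\R)}^2 .
\end{equation*}
The key observation is a support property on the subinterval $(t_{m-1},t_m)$. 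For ordered $0<t_1<\dots<t_m<T$ and $x\in(t_{m-1},t_m)$, every $t_j$ with $j\le m-1$ satisfies $t_j\le t_{m-1}<x$, so all positive parts $(t_j-x)_+$ vanish; since $x>0$ the term $(-x)_+$ vanishes; and in the TFBMII kernel the integral $\lambda\int_0^{t_j}(s-x)_+^{H-\frac12}e^{-\lambda(s-x)_+}\,ds$ also vanishes because $s\le t_j<x$ throughout. Hence $g_{H,\lambda,t_j}(x)=0$ on $(t_{m-1},t_m)$ for every $j\le m-1$, in particular $g_{H,\lambda,t_{m-1}}$ vanishes there, so on $(t_{m-1},t_m)$ the integrand above reduces, for \emph{every} choice of the $a_j$, to $g_{H,\lambda,t_m}(x)$. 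For TFBM this is exactly $e^{-\lambda(t_m-x)}(t_m-x)^{H-\frac12}$; for TFBMII it equals $e^{-\lambda(t_m-x)}(t_m-x)^{H-\frac12}+\lambda\int_x^{t_m}(s-x)^{H-\frac12}e^{-\lambda(s-x)}\,ds$, a sum of two \emph{nonnegative} terms, so in both cases $g_{H,\lambda,t_m}(x)\ge e^{-\lambda(t_m-x)}(t_m-x)^{H-\frac12}$ on $(t_{m-1},t_m)$. Restricting the $L^2$-norm to this subinterval therefore yields
\begin{equation*}
\Var\{X(t_m)-X(t_{m-1})\mid X(t_1),\dots,X(t_{m-1})\}
\ \ge\ \int_{t_{m-1}}^{t_m}(t_m-x)^{2H-1}e^{-2\lambda(t_m-x)}\,dx
\ \ge\ \frac{e^{-2\lambda T}}{2H}\,(t_m-t_{m-1})^{2H},
\end{equation*}
using $t_m-x<T$ for $x\in(t_{m-1},t_m)$.

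To finish, combine this with the upper bound in \eqref{two-sided1}, namely $\Var\{X(t_m)-X(t_{m-1})\}=\E|X(t_m)-X(t_{m-1})|^2\le C_2(t_m-t_{m-1})^{2H}$. Dividing, the $(t_m-t_{m-1})^{2H}$ factors cancel and we obtain $V_m\ge e^{-2\lambda T}/(2HC_2)$, a bound independent of $m$ and of the particular configuration $t_1<\dots<t_m$ in $(0,T)$; consequently $\liminf_{\epsilon\downarrow0}V_m\ge e^{-2\lambda T}/(2HC_2)>0$, which is $(iii)$, and condition $(A)$ holds. The argument is a tempered analogue of the classical proof of local nondeterminism of fBm via its Mandelbrot--van Ness representation. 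I expect the only genuine subtlety to be the TFBMII case, where one must check that the extra tempered integral term in $g^{I\!I}_{H,\lambda,t_m}$ does not cancel the leading singular term on $(t_{m-1},t_m)$ — and it cannot, precisely because that term is nonnegative there; everything else is bookkeeping with the supports of the kernels.
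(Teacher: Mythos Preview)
Your proposal is correct and follows essentially the same route as the paper: both bound the conditional variance from below by $\int_{t_{m-1}}^{t_m}(t_m-x)^{2H-1}e^{-2\lambda(t_m-x)}\,dx$ and then divide by the upper bound from Theorem~\ref{lem:TFBM and TFBMII bounds}. The only cosmetic difference is that the paper phrases the key step via the inclusion $\sigma(X(u):u\le s)\subset\sigma(B(u):u\le s)$ while you use the equivalent $L^2$ support/projection formulation directly; you also treat TFBM and TFBMII uniformly, whereas the paper refers to earlier work for the TFBM case.
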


\begin{proof}
By letting the index of stability $\alpha=2$ in the proof of Proposition 5.4 in \cite{Meerschaertsabzikar2}, one can prove that  TFBM is LND on every interval $(0,T)$ for $0<T<\infty$ (it is proved on any interval $(\delta, T)$ for $0<\delta<T<\infty$ but the proof does not refer to $\delta$ and can be extended to $(0,T)$). To prove  that TFBMII is LND on every   interval $(0,T)$, we need to verify assumptions $(i)$--$(iii)$ of condition $(A)$. The first and second assumptions follow immediately from the left-hand side of inequality \eqref{two-sided1}, Theorem \ref{lem:TFBM and TFBMII bounds}. It remains to show that the TFBMII $\{B^{I\!I}_{H,\lambda}(t)\}$ satisfies assumption $(iii)$.

 From \eqref{eq:TFBMIIdef} one can see that $\{B(u):u\leq s\}$ determines $\{B^{I\!I}_{H,\lambda}(u), u\leq s\}$ in the sense that
\begin{equation}\label{incl}
\sigma\left(B^{I\!I}_{H,\lambda}(u), u\leq s \right)\subset \sigma\left(B(u), u\leq s \right)
\end{equation}
for all $s>0$. So, for the moment, consider any  $s<t$ and  the value
$$ {\rm Var}\Big(B^{I\!I}_{H,\lambda}(t) - B^{I\!I}_{H,\lambda}(s)| B(u), u\leq s \Big).$$

Next, write the moving average representation in \eqref{eq:TFBMIIdef} as  follows:
\begin{equation*}
\begin{split}
B^{I\!I}_{H,\lambda}(t)&= \frac{1}{\Gamma(H+\frac{1}{2})}\left( \int_{-\infty}^{s}g^{I\!I}_{H,\lambda,t}(x) dB(x)
 +\int_{s}^{t}g^{I\!I}_{H,\lambda,t}(x)\ dB(x)\right),
\end{split}
\end{equation*}
and observe that
$\int_{-\infty}^{s}g^{I\!I}_{H,\lambda,t}(x) dB(x)$
is measurable with respect to $\sigma\left(B(u), u\leq s \right)$.
Therefore
\begin{equation}\label{var3}
\begin{split}
{\rm Var}\Big(B^{I\!I}_{H,\lambda}(t)|B(u), u\leq s \Big)
&={\rm Var}\Bigg( \frac{1}{\Gamma(H+\frac{1}{2})}\Big[\int_{s}^{t}
\big((t-x)_{+}^{H - \frac{1}{2}}e^{-\lambda (t-x)_{+}}\\
&+\lambda \int_{0}^{t} (w-x)_{+}^{H-\frac{1}{2}}e^{-\lambda(w-x)_{+}}\ dw\big)\ dB(x)\Big]\Big| B(u), u\leq s \Bigg)\\
&\geq \frac{1}{\Gamma^{2}(H+\frac{1}{2})} \int_{s}^{t}
  (t-x)^{2H-1}e^{-2\lambda(t-x)}   dx.
\end{split}
\end{equation}
Now, taking into account the form of the numerator in \eqref{BermanVM}, relation \eqref{incl} and the fact that finally the left-hand side of \eqref{var3} is bounded from below by some non-random value, we conclude that the relative predicted error $V_{m}$ is bounded from below by the following value:
\begin{equation}\label{bermanbound}
\frac{ \int_{s}^{t}   (t-x)^{2H-1}e^{-2\lambda(t-x)} \ dx}{\Gamma^{2}(H+\frac{1}{2})\text{Var}\Big(B^{I\!I}_{H,\lambda}(t)-B^{I\!I}_{H,\lambda}(s)\Big)}\ge \frac{     (t-s)^{2H}e^{-2\lambda(t-s)}}{\Gamma^{2}(H+\frac{1}{2})\text{Var}\Big(B^{I\!I}_{H,\lambda}(t)-B^{I\!I}_{H,\lambda}(s)\Big)},
\end{equation}
where $s=t_{m-1}$ and $t=t_{m}$.
Applying Lemma \ref{lem:TFBM and TFBMII bounds}  and Remark \ref{rem2}, we get that
\begin{equation}\label{eq:condition3TFBMII secondtpart}
{\text{Var}}\{B^{I\!I}_{H,\lambda}(t_m)-B^{I\!I}_{H,\lambda}(t_{m-1})\}\leq C_T\left|t_m-t_{m-1}\right|^{2H}
\end{equation}
for $|t_m-t_{m-1}|<\epsilon$ and all points being at interval $(0,T)$. Here $C_T$ is a constant depending only on $T$ but not on $m$ and the points $t_1, \ldots, t_m$. With the help of \eqref{eq:condition3TFBMII secondtpart}, we get that the ratio in \eqref{bermanbound} is bounded below by
\begin{equation*}
\frac{e^{-2\lambda(t_m-t_{m-1})}(t_m-t_{m-1})^{2H}}{2H \Gamma^{2}(H+\frac{1}{2})C_{2}(t_m-t_{m-1})^{2H}}=
\frac{e^{-2\lambda(t_m-t_{m-1})} }{2H \Gamma^{2}(H+\frac{1}{2})C_{T} }
\end{equation*}
for $|t_m-t_{m-1}|<\epsilon$ and all points being at interval $(0,T)$. This value tends to $ \frac{1 }{2H \Gamma^{2}(H+\frac{1}{2})C_{T} }$ as $\epsilon\downarrow 0$,
 and hence condition $(A)$ holds. It means $\{B^{I\!I}_{H,\lambda}\}$ is LND on $(0,T)$ and this completes the proof.
\end{proof}

\subsection{$p$-variation of tempered fractional processes}\label{sec3}

In this section, we show that TFBM and TFBMII have the same $\frac 1H$-variation as  the FBM, when $0<H<1$. First, we introduce  the "uniform" definition  of $\beta$-variation of a stochastic process. Let us introduce some notation.
Fix a time interval $[a,b]\subset \rr$, and consider the uniform partition
\begin{equation*}
\pi^{n}=\{a=t^{n}_{0}<t^{n}_{1}<\ldots<t^{n}_{n}=b\},
\end{equation*}
where $t^{n}_{i}=a+\frac{i}{n}(b-a)$ for $i = 0,\ldots,n$. Let $\beta\geq 1$ and $X=\{X_t,t  \in \rr\}$ be a continuous stochastic process. Moreover, we define $\Delta^{n}_{i} X=X(t^{n}_{i})-X(t^{n}_{i-1})$.

\begin{defn}
For any $\beta\ge 1 $ the $\beta$-variation of X on the interval $[a,b]$, denoted by $\langle X\rangle _{\beta,[a,b]}$, is the limit in probability of
\begin{equation*}
S^{[a,b]}_{\beta,n}(X):=\sum_{i=1}^{n}|\Delta^{n}_{i} X|^{\beta},
\end{equation*}
if the limit exists. We say that the $\beta$-variation
of $X$ on $[a,b]$ exists in $L^{p}$ if the above limit exists in $L^{p}$ for some $p\ge 1$.
\end{defn}
It is also easy to see that the following triangular inequality holds:
\begin{equation*}
S^{[a,b]}_{\beta,n}(X+Y)^{\frac{1}{\beta}}\leq S^{[a,b]}_{\beta,n}(X)^{\frac{1}{\beta}}+S^{[a,b]}_{\beta,n}(Y)^{\frac{1}{\beta}}.
\end{equation*}
This inequality implies that if $X$ and $Y$ are two continuous stochastic processes
such that $\langle X\rangle _{\beta,[a,b]}$ exists and $\langle Y\rangle _{\beta,[a,b]}=0$, then
\begin{equation}\label{eq:key formula for variation}
\langle X+Y\rangle _{\beta,[a,b]}=\langle X\rangle _{\beta,[a,b]}.
\end{equation}
Indeed, obviously $\langle X+Y\rangle_{\beta,[a,b]}\le\langle X\rangle_{\beta,[a,b]}$, and this inequality, complemented by the following one:
$$\langle X\rangle_{\beta,[a,b]}\le\langle X+Y\rangle_{\beta,[a,b]}+\langle -Y\rangle_{\beta,[a,b]},$$
immediately implies \eqref{eq:key formula for variation}.
Now, we are ready to state and proof the result of this section. The key for the proof of the result is \eqref{eq:key formula for variation} and using the well known fact that a normalized fBm has a finite  $\frac{1}{H}$-variation on any interval $[a,b]$, and it  equals $(b-a)\mathbb{E} [ |Z|^{\frac 1H}]$, where  $Z$ is a $\mathcal{N}(0,1)$-random variable, see,  e.g.  \cite{Mishura}, Section 1.18.

\begin{thm}\label{theo:variation} Let $X$ be either    a TFBM  $B_{H,\lambda}$ with  parameters $H\in (0,1)$ and $\lambda>0$, defined by \eqref{eq:TFBMdef} or a TFBMII $B^{I\!I}_{H, \lambda}$ given by \eqref{eq:TFBMIIdef}.  Then  $$\langle B_{H,\lambda}\rangle _{\frac 1H,[a,b]}=c_{H}(b-a)$$   in probability,
where $c_H= C(H)^{-\frac 1H}\mathbb{E} [ |Z|^{\frac 1H}]$ and $Z$ is a $\mathcal{N}(0,1)$-random variable.
 \end{thm}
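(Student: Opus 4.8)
The plan is to realise each tempered process as a fractional Brownian motion plus a remainder that is smoother than $H$-H\"older, and then to invoke the superposition identity \eqref{eq:key formula for variation}. Put $\widetilde{B}_H(t):=\int\big((t-x)_+^{H-1/2}-(-x)_+^{H-1/2}\big)B(dx)$, so that $B_H=C(H)\widetilde{B}_H$ is the normalised Mandelbrot--van~Ness fBm. From \eqref{eq:TFBMdef}, \eqref{eq:TFBMIIdef}, \eqref{hdef0} we can write $X=\widetilde{B}_H+Y$, where for $X=B^{I}_{H,\lambda}$ the process $Y$ has kernel $\big(e^{-\lambda(t-x)_+}-1\big)(t-x)_+^{H-1/2}-\big(e^{-\lambda(-x)_+}-1\big)(-x)_+^{H-1/2}$, while for $X=B^{I\!I}_{H,\lambda}$ one adds the extra term $\lambda\int_0^t(s-x)_+^{H-1/2}e^{-\lambda(s-x)_+}\,ds$; in either case the kernel of $Y$ is square integrable (an elementary estimate exploiting the exponential decay at $-\infty$), so $Y$ is a well-defined centred Gaussian process with stationary increments, continuous by Kolmogorov's criterion. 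Since $\widetilde{B}_H=C(H)^{-1}B_H$ and a normalised fBm has $\tfrac1H$-variation $(b-a)\,\mathbb{E}[|Z|^{1/H}]$ on $[a,b]$, we get $\langle\widetilde{B}_H\rangle_{\frac1H,[a,b]}=C(H)^{-1/H}(b-a)\,\mathbb{E}[|Z|^{1/H}]=c_H(b-a)$, and \eqref{eq:key formula for variation} reduces the theorem to proving $\langle Y\rangle_{\frac1H,[a,b]}=0$.

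I claim that $\langle Y\rangle_{\frac1H,[a,b]}=0$ is implied by the single variance estimate
\[
\mathbb{E}\big|Y(t)-Y(s)\big|^2\ \le\ C\,|t-s|^2,\qquad s,t\in[a,b].
\]
Indeed, $Y(t)-Y(s)$ is centred Gaussian, hence $\mathbb{E}|Y(t)-Y(s)|^{1/H}=\mathbb{E}[|Z|^{1/H}]\,\big(\mathbb{E}|Y(t)-Y(s)|^2\big)^{1/(2H)}\le C'|t-s|^{1/H}$; therefore, along the uniform partition $\pi^n$ of mesh $(b-a)/n$,
\[
\mathbb{E}\,S^{[a,b]}_{\frac1H,n}(Y)=\sum_{i=1}^n\mathbb{E}\big|\Delta^n_iY\big|^{1/H}\le C'\,n\Big(\tfrac{b-a}{n}\Big)^{1/H}=C'(b-a)^{1/H}\,n^{\,1-\frac1H}\ \longrightarrow\ 0
\]
as $n\to\infty$, because $1/H>1$ (here $H<1$). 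Thus $S^{[a,b]}_{\frac1H,n}(Y)\to0$ in $L^1$, a fortiori in probability, i.e.\ $\langle Y\rangle_{\frac1H,[a,b]}=0$; combined with the previous paragraph this gives $\langle X\rangle_{\frac1H,[a,b]}=c_H(b-a)$.

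It remains to verify the variance estimate, which is the only non-trivial step. Because the $(-x)_+$-terms are $t$-independent they cancel in $Y(t)-Y(s)$; taking $s<t$ and $h:=t-s$, the quantity $\mathbb{E}|Y(t)-Y(s)|^2$ is the integral over $\mathbb{R}$ of a square that vanishes for $x\ge t$, and we split it over $x\in(s,t)$ and $x<s$. On $(s,t)$ one uses $|e^{-\lambda u}-1|\le\min(1,\lambda u)$ and, for the TFBMII contribution, $\lambda\int_x^t(w-x)^{H-1/2}e^{-\lambda(w-x)}\,dw\le\tfrac{\lambda}{H+1/2}(t-x)^{H+1/2}$, so this part is $O\!\big(\int_0^h v^{2H+1}\,dv\big)=O(h^{2H+2})$. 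For $x<s$ one substitutes $u=s-x$; with $\phi(v):=(e^{-\lambda v}-1)v^{H-1/2}$ one has $\phi(u+h)-\phi(u)=\int_0^h\phi'(u+r)\,dr$, and a direct check gives $\phi'\in L^2(0,\infty)$ exactly for $0<H<1$ (near the origin $\phi'(v)=O(v^{H-1/2})$ with $2(H-1/2)>-1$, and at infinity $\phi'(v)=O(v^{H-3/2})$ with $2(H-3/2)<-1$). Cauchy--Schwarz and Fubini then yield $\int_0^\infty\big(\phi(u+h)-\phi(u)\big)^2\,du\le h^2\norm{\phi'}_{L^2}^2$, and the same Cauchy--Schwarz/Fubini device applied to $\lambda\int_u^{u+h}v^{H-1/2}e^{-\lambda v}\,dv$ bounds the remaining TFBMII (cross and extra) pieces by $\le\lambda^2\Gamma(2H)(2\lambda)^{-2H}h^2$. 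Adding the contributions gives $\mathbb{E}|Y(t)-Y(s)|^2\le C(h^2+h^{2H+2})\le C|t-s|^2$ for $h\le b-a$, as required. The main obstacle is precisely this last estimate: one has to check that tempering produces a correction of regularity strictly above $H$ (heuristically, the factor $e^{-\lambda u}-1\sim-\lambda u$ raises the exponent of the singular kernel from $H-\tfrac12$ to $H+\tfrac12$), and that the constants are uniform up to the endpoints of $[a,b]$; the bookkeeping for TFBMII, whose kernel carries the additional $\lambda\int_0^t\!\cdots ds$ term, is where the computation is heaviest.
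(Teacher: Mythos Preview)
Your proof is correct and follows the same overall strategy as the paper: decompose $X=\widetilde{B}_H+Y$, invoke \eqref{eq:key formula for variation}, and show $\langle Y\rangle_{1/H,[a,b]}=0$ by proving $\mathbb{E}\,S^{[a,b]}_{1/H,n}(Y)\to 0$ via a variance bound on the increments of $Y$. Your execution of the variance estimate---packaging it as a single uniform bound $\mathbb{E}|Y(t)-Y(s)|^2\le C|t-s|^2$ obtained from $\phi'\in L^2(0,\infty)$ plus Cauchy--Schwarz/Fubini---is arguably cleaner than the paper's direct integral splits and dominated-convergence arguments, and you also spell out the TFBMII case which the paper leaves to the reader.
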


\begin{proof}
The proof  for   a TFBM  and a  TFBMII is   similar and hence we only consider TFBM. First, apply the moving average representation of the TFBM  to get the decomposition
\[
B_{H,\lambda}(t)= B_{H}(t)+ Y(t),
\]
where
\[
B_{H}(t)=\int_{-\infty}^{+\infty}\big[(t-x)_{+}^{H-\frac{1}{2}} -(-x)_{+}^{H-\frac{1}{2}} \big]B(dx)
\]
and
\begin{equation}\label{eq:Y representation}
Y(t)=\int_{-\infty}^{+\infty}\big[(t-x)_{+}^{H-\frac{1}{2}}(e^{-\lambda(t-x)_{+}}-1)-(-x)_{+}^{H-\frac{1}{2}}(e^{-\lambda(-x)_{+}}-1)\big]B(dx)
\end{equation}
for $t\in \rr$.   Notice that  $C(H)^{-1} B_H$ is a fBm. Therefore, taking into account \eqref{eq:key formula for variation}, in order to prove the proposition, one needs to show that
\begin{equation*}
S^{[a,b]}_{\beta,n}(Y):=\sum_{i=1}^{n}|\Delta^{n}_{i} Y|^{\frac 1H},
\end{equation*}
converges to zero  in probability, where $\Delta^{n}_{i} Y=Y(t^{n}_{i})-Y(t^{n}_{i-1})$. In other words, we are in position to establish that $\langle Y\rangle _{\beta,[a,b]}=0$ where $Y$ is given by \eqref{eq:Y representation}. Obviously, the increments of $Y$ equal
\begin{equation*}
Y(t^{n}_{i+1})-Y(t^{n}_{i})
=\int_{-\infty}^{+\infty}\big[(t^{n}_{i+1}-x)_{+}^{H-\frac{1}{2}}(e^{-\lambda(t^{n}_{i+1}-x)_{+}}-1)-(t^{n}_{i}-x)_{+}^{H-\frac{1}{2}}(e^{-\lambda(t^{n}_{i}-x)_{+}}-1)\big]B(dx)
\end{equation*}
and then
\begin{equation*}
\begin{split}
&\sum_{i=1}^{n}\mathbb{E}\Big(|Y(t^{n}_{i+1})-Y(t^{n}_{i})|^{\frac 1H}\Big)\\
&=C\sum_{i=1}^{n}\Big(\int_{-\infty}^{\infty}\Big[(t^{n}_{i+1}-x)_{+}^{H-\frac{1}{2}}(e^{-\lambda(t^{n}_{i+1}-x)_{+}}-1)-(t^{n}_{i}-x)_{+}^{H-\frac{1}{2}}(e^{-\lambda(t^{n}_{i}-x)_{+}}-1)                             \Big]^{2}\ dx\Big)^{\frac{1}{2H}}\\
&\leq C\sum_{i=1}^{n}\Big(\int_{-\infty}^{t^{n}_{i}}\Big[(t^{n}_{i+1}-x)^{H-\frac{1}{2}}(e^{-\lambda(t^{n}_{i+1}-x)}-1)-(t^{n}_{i}-x)^{H-\frac{1}{2}}(e^{-\lambda(t^{n}_{i}-x)}-1)\Big]^{2}\ dx\Big)^{\frac{1}{2H}}\\
& \quad +C\sum_{i=1}^{n}\Big(\int_{t^{n}_{i}}^{t^{n}_{i+1}}\Big[(t^{n}_{i+1}-x)^{H-\frac{1}{2}}(e^{-\lambda(t^{n}_{i+1}-x)}-1)\Big]^{2}\ dx\Big)^{\frac{1}{2H}}\\
&=: C(I_{1,n}+I_{2,n}),
\end{split}
\end{equation*}
where $C$ is a generic constant that   depends on $H$.
Let us first show that $I_{2,n}\to 0$ as $n\to\infty$. Using the change of variable $t^{n}_{i+1}-x=y$ in $I_{2}$ and  the inequality $|e^{-a}-e^{-b}|\leq |a-b|$ for $a, b>0$ we can write
\begin{equation*}
\begin{split}
I_{2,n}&= n\Big(\int_{0}^{\frac{b-a}{n}}y^{2H-1}(e^{-\lambda y}-1)^{2}\ dy\Big)^{\frac{1}{2H}}
 \leq n\lambda^{\frac 1H }\Big(\int_{0}^{\frac{b-a}{n}}y^{2H+1}\ dy\Big)^{\frac{1}{2H}}= C n^{-\frac 1H }\to 0
\end{split}
\end{equation*}
as $n\to\infty$. Next, we show that $I_{1,n}\to 0$ as $n\to\infty$. First we use the change of variable $t^{n}_{i}-x=y$   to see that
\begin{equation*}
 I_{1,n} =n\left(\int_{0}^{\infty}\left[\left(y+\frac{b-a}{n}\right)^{H-\frac{1}{2}}\left(1-e^{-\lambda(y+\frac{b-a}{n})}\right)-y^{H-\frac{1}{2}}\left(1-e^{-\lambda y}\right)\right]^{2}\ dy\right)^{\frac{1}{2H}},
\end{equation*}
and it is sufficient to prove that
\begin{equation*}
I^{2H}_{1,n}=n^{2H}\int_{0}^{\infty}\left[\left(y+\frac{b-a}{n}\right)^{H-\frac{1}{2}}\left(1-e^{-\lambda(y+\frac{b-a}{n})}\right)-y^{H-\frac{1}{2}}\left(1-e^{-\lambda y}\right)\right]^{2}\ dy\rightarrow 0
\end{equation*}
as $n\rightarrow \infty$.
Further,
\begin{equation*}
\begin{split}
 I^{2H}_{1,n}&\le  2n^{2H}\left(1-e^{-\lambda(\frac{b-a}{n})}\right)^2\int_{0}^{\infty} \left(y+\frac{b-a}{n}\right)^{2H-1}e^{-2\lambda y}\ dy\\
 &\quad +2(b-a)^{2H}\int_{0}^{\infty}\left((z+1)^{H-1/2}-z^{H-1/2}\right)^2(1-e^{-\lambda\left(\frac{b-a}{n}\right)z})^2dz\\
&=I^{2H}_{11,n}+2(b-a)^{2H}I^{2H}_{12,n},
\end{split}
\end{equation*}
where in the second integral we changed the variable $y=\frac{b-a}{n}z.$
It is easy to see that for $n>b-a$
\begin{equation*}
\begin{split}
I^{2H}_{11,n}&\le 2n^{2H-2}(\lambda(b-a))^2\int_{0}^{\infty} \left(\left(y+1\right)^{2H-1}\vee y^{2H-1}\right)e^{-2\lambda y}\ dy\rightarrow 0
\end{split}
\end{equation*}
as $n\rightarrow \infty$.
Concerning $I^{2H}_{12,n}$, we observe that $(1-e^{-\lambda\left(\frac{b-a}{n}\right)z})^2\rightarrow 0$ as $n\rightarrow \infty$, an splitting $I^{2H}_{12,n}=\int_{0}^{1}+\int_{1}^{\infty}$, we immediately get that $\int_{0}^{1}\rightarrow 0$ as $n\rightarrow \infty$, while the integrand in the $\int_{1}^{\infty}$ can be bounded as follows:
\begin{equation*}
\begin{split}\left((z+1)^{H-1/2}-z^{H-1/2}\right)^2(1-e^{-\lambda\left(\frac{b-a}{n}\right)z})^2&\le
(H-1/2)^2\left((z+1)^{2H-3}\vee  z^{2H-3}\right),
\end{split}
\end{equation*}
and $\int_{1}^{\infty}$ converges to zero due to the Lebesgue   dominated convergence theorem. Now the proof is complete.

\end{proof}
\begin{rem} Theorem \ref{theo:variation} implies immediately that $p$-variation of a TFBM  and a  TFBMII   equals   zero or infinity, depending on whether $p$ is greater or less than $1/H$.\end{rem}

\section{Breuer--Major theorem in application to tempered fractional Gaussian noises}\label{sec:BM}

In this section, we consider   the tempered fractional Gaussian noises in the context of  popular   Breuer-Major Theorem  (see \cite{Breuer} or \cite[Theorem 7.2.4]{Nourdin-Peccati-Bible} for a modern exposition) in the Gaussian analysis.
\subsection{Covariance structures of tempered fractional Gaussian noises}
First,  we study the increment of tempered fractional Gaussian processes and investigate the asymptotic behavior of the their covariance functions for large lags. These  results then provide  a useful tool to develop some limit theorems. For simplicity, denote  $\alpha=H-\frac{1}{2}$. Given a TFBM \eqref{eq:TFBMdef}, we define tempered fractional Gaussian noise (TFGN)
\begin{equation*}\label{eq:TFGNdef}
X^{I}_{\alpha,\lambda}(j)=B^{I}_{H,\lambda}(j+1)-B^{I}_{H,\lambda}(j)\quad\text{for}\quad  j\in \mathbb{Z}.
\end{equation*}
It follows easily from \eqref{eq:TFBMdef} that TFGN has the moving average representation:
\begin{equation}\label{eq:TFGNmoving}
X^{I}_{\alpha,\lambda}(j)= \int_{\mathbb{R}}g^{I}_{\lambda,\alpha,j}(x)B(dx)=
{\int_{\mathbb{R}}\left[e^{-\lambda(j+1-x)_{+}}(j+1-x)_{+}^{\alpha}-e^{-\lambda(j-x)_{+}}(j-x)_{+}^{\alpha} \right]B(dx)}.
\end{equation}
Along the same lines, a tempered fractional Gaussian noise of the second kind (TFGNII) can be defined as follows:
\begin{equation*}
X^{I\!I}_{\alpha,\lambda}(j)=B^{I\!I}_{H,\lambda}(j+1)-B^{I\!I}_{H,\lambda}(j)\quad\text{for  $j\in \mathbb{Z}$.}
\end{equation*}
It follows from  \eqref{hdef0} that  a TFGNII  has the moving average representation
\begin{equation}\label{eq:TFGNIImoving}
\begin{split}
X^{I\!I}_{\alpha,\lambda}(j)&= \int_{\mathbb{R}}g^{I\!I}_{\lambda,\alpha,j}(x)B(dx)=\int_\rr \Big[ e^{-\lambda(j+1-x)_{+}}(j+1-x)_{+}^{\alpha}-e^{-\lambda(j-x)_{+}}(j-x)_{+}^{\alpha}\\
&\qquad\qquad\qquad\quad+\lambda \int_{j}^{j+1} e^{-\lambda(s-x)_{+}}(s-x)_{+}^{\alpha} ds\Big]  B(dx).
\end{split}
\end{equation}

So, let $X^{I}_{\alpha,\lambda}(j)$ and $X^{I\!I}_{\alpha,\lambda}(j)$  be the stationary sequences given by \eqref{eq:TFGNmoving} and \eqref{eq:TFGNIImoving} respectively. Denote
\begin{equation}\label{eq:covar}\gamma^J (k):=\mathbb{E}[X^{J}_{\alpha,\lambda}(0) X^{J}_{\alpha,\lambda}(k)]=\abs{k+1}^{2H}(C^{J}_{\vert k \vert +1})^2-2\abs{k}^{2H} (C^{J}_{\vert k\vert})^2+\abs{k-1}^{2H}(C^{J}_{\vert k-1 \vert })^2,\; J=I,II,\end{equation}
 where the normalizing constants $C^J_t$ are presented in Lemma \ref{lem:usefulrelations}. To analyze the behavior of $\gamma^J (k)$, we shall combine  its direct representation via the kernels $g^{J}_{\lambda,\alpha}$, $J=I,II $ and its representation from \eqref{eq:covar}. The following   lemma  specifies the behavior of the intermediate noise covariance and will be used in the proof of the main theorems.
\begin{lem}\label{lem:Positive-Negative-Correlation}
	\begin{itemize}
		\item[(a)] Let $\lambda >0$. Consider function $\psi (t) = (C^I_t)^2  \, t^{2H}$ for $t>0$ where the normalizing constant $C^I_t$ is given in Lemma \ref{lem:usefulrelations}. Then $\psi''(t)<0$ for all $t>0$ provided that $H \in (0,\frac{1}{2}]$. Hence, TFGN  is negatively correlated when $H \in (0,\frac{1}{2}]$ meaning that for every $0 \neq k \in \Z$
		\begin{equation}\label{eq:negative-correlation-I}
		\gamma^I (k)  < 0.
		\end{equation}
	\item[(b)] Let $ \lambda >0$. Then  for every $ k \in \Z$ and $H>1/2$,
		\begin{equation}\label{eq:positive-correlation-II}
		\gamma^{I\!I} (k)  >0.
		\end{equation}
	Moreover, when $H=1/2$, it holds $\gamma^{I\!I} (0)  >0$, and $\gamma^{I\!I} (k)  =0$ for every $0 \neq k \in \Z$.
		\end{itemize}
	\end{lem}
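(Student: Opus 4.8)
The plan is to trace both sign statements back to a convexity/positivity property: for $(a)$ of the explicit variance function of TFBM, for $(b)$ of the moving-average kernel of TFGNII.

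\textbf{Part $(a)$.} Put $\psi(t):=\E[B^{I}_{H,\lambda}(t)]^{2}=(C^{I}_{t})^{2}t^{2H}$ for $t>0$ and $\psi(0):=0$ (consistent, since $g^{I}_{H,\lambda,0}\equiv 0$, i.e. $B^{I}_{H,\lambda}(0)=0$ a.s.). By Lemma~\ref{lem:usefulrelations}$(a)$ together with $\E[B^{I}_{H,\lambda}(t)]^{2}=t^{2H}\E[B^{I}_{H,\lambda t}(1)]^{2}$,
\[
\psi(t)=\frac{2\Gamma(2H)}{(2\lambda)^{2H}}-\frac{2\Gamma(H+\tfrac12)}{\sqrt{\pi}\,(2\lambda)^{H}}\,t^{H}K_{H}(\lambda t),\qquad t>0,
\]
so it suffices to analyse $f(t):=t^{H}K_{H}(\lambda t)$. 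First I would differentiate twice, using the standard identity $\tfrac{d}{dz}\big(z^{\nu}K_{\nu}(z)\big)=-z^{\nu}K_{\nu-1}(z)$ and $K_{-\nu}=K_{\nu}$, to get $f'(t)=-\lambda t^{H}K_{1-H}(\lambda t)$ and then
\[
f''(t)=(1-2H)\,\lambda\,t^{H-1}K_{1-H}(\lambda t)+\lambda^{2}\,t^{H}K_{H}(\lambda t).
\]
For $H\in(0,\tfrac12]$ one has $1-2H\ge 0$, and $K_{\nu}(z)>0$ for all $z>0$, $\nu\in\R$; hence $f''(t)>0$, so $\psi''(t)=-\tfrac{2\Gamma(H+1/2)}{\sqrt{\pi}(2\lambda)^{H}}f''(t)<0$ on $(0,\infty)$, i.e. $\psi$ is strictly concave there. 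Finally, from Proposition~\ref{covTFBMTFBMII}$(a)$ and $B^{I}_{H,\lambda}(0)=0$ one obtains, for $k\neq 0$, $\gamma^{I}(k)=\tfrac12\big(\psi(|k|+1)-2\psi(|k|)+\psi(|k|-1)\big)=\tfrac12\int_{0}^{1}\!\!\int_{0}^{1}\psi''(|k|-1+u+v)\,du\,dv<0$; equivalently, for $|k|\ge 1$, strict concavity applied to $|k|=\tfrac12(|k|-1)+\tfrac12(|k|+1)$ (together with $\psi(0)=0$ when $|k|=1$) gives $2\psi(|k|)>\psi(|k|-1)+\psi(|k|+1)$. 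This is \eqref{eq:negative-correlation-I}.

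\textbf{Part $(b)$, the case $H=\tfrac12$.} Here $\alpha=0$, and a direct evaluation of the three terms defining $g^{I\!I}_{1/2,\lambda,t}$ shows they telescope: the contributions cancel for $x<0$, vanish for $x\ge t$, and add up to $1$ for $0\le x<t$, so $g^{I\!I}_{1/2,\lambda,t}(x)=\1_{[0,t)}(x)$. Hence $B^{I\!I}_{1/2,\lambda}$ is a standard Brownian motion and $\{X^{I\!I}_{0,\lambda}(j)\}_{j\in\Z}$ is a standard Gaussian white noise, so $\gamma^{I\!I}(0)>0$ and $\gamma^{I\!I}(k)=0$ for every $0\ne k\in\Z$.

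\textbf{Part $(b)$, the case $H>\tfrac12$.} Set $\alpha:=H-\tfrac12>0$, and rewrite the kernel in \eqref{eq:TFGNIImoving} by integration by parts. With $\phi(s):=e^{-\lambda(s-x)_{+}}(s-x)_{+}^{\alpha}$ one has $\phi'(s)=\alpha e^{-\lambda(s-x)_{+}}(s-x)_{+}^{\alpha-1}-\lambda\phi(s)$ for $s\neq x$, and since $\alpha>0$ makes $(s-x)_{+}^{\alpha-1}$ integrable near $s=x$, $\phi$ is absolutely continuous on $[j,j+1]$; therefore $\phi(j+1)-\phi(j)=\alpha\int_{j}^{j+1}e^{-\lambda(s-x)_{+}}(s-x)_{+}^{\alpha-1}\,ds-\lambda\int_{j}^{j+1}\phi(s)\,ds$. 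Substituting this into \eqref{eq:TFGNIImoving} cancels the $\lambda\int_{j}^{j+1}\phi$ term and leaves
\[
g^{I\!I}_{\lambda,\alpha,j}(x)=\alpha\int_{j}^{j+1}e^{-\lambda(s-x)_{+}}(s-x)_{+}^{\alpha-1}\,ds\ \ge\ 0,
\]
with strict inequality exactly for $x<j+1$. Consequently $\gamma^{I\!I}(k)=\int_{\R}g^{I\!I}_{\lambda,\alpha,0}(x)\,g^{I\!I}_{\lambda,\alpha,k}(x)\,dx$ is the integral of a non-negative function which is strictly positive on the half-line $\{x<\min(1,k+1)\}$; since $\gamma^{I\!I}(k)$ is finite (TFGNII has finite variance), $0<\gamma^{I\!I}(k)<\infty$ for every $k\in\Z$, which is \eqref{eq:positive-correlation-II}.

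\textbf{Main difficulty.} The crux is the sign computation in $(a)$: one must invoke the explicit Bessel form of $C^{I}_{t}$ and the recurrences for $K_{\nu}$ to see that $\psi''<0$ occurs precisely for $H\le\tfrac12$; the reduction to discrete second differences and the boundary case $|k|=1$ are routine. Part $(b)$ is comparatively soft once one notices that integration by parts collapses the three-term kernel of TFGNII into the single manifestly non-negative integral above — the only subtlety being that this step is legitimate only for $\alpha>0$, i.e. $H\ge\tfrac12$, which is exactly the range in the statement.
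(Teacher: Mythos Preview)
Your proof is correct, and in part $(a)$ it is in fact more elementary than the paper's own argument. The paper differentiates $(\lambda t)^{H}K_{H}(\lambda t)$ twice using only the rule $\tfrac{d}{dx}(x^{\nu}K_{\nu}(x))=-x^{\nu}K_{\nu-1}(x)$, which leads to an expression involving $K_{H-1}$ and $K_{H-2}$ and ultimately to the sign of the ratio
\[
\frac{K_{H-2}(\lambda t)}{K_{H-1}(\lambda t)}-\frac{1}{\lambda t}.
\]
To settle this sign for $H<\tfrac12$ the paper invokes an external sharp estimate on Bessel function ratios, and treats $H=\tfrac12$ separately by a direct computation of the covariance. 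By inserting the symmetry $K_{-\nu}=K_{\nu}$ after the first differentiation (so that $K_{H-1}=K_{1-H}$), you instead land on
\[
f''(t)=(1-2H)\,\lambda\,t^{H-1}K_{1-H}(\lambda t)+\lambda^{2}\,t^{H}K_{H}(\lambda t),
\]
whose positivity for $H\le\tfrac12$ is immediate from $K_{\nu}>0$, with no external input and no separate case $H=\tfrac12$. Your part $(b)$ follows the same integration-by-parts idea as the paper. One small point worth making explicit in $(a)$: for $|k|=1$ the second-difference argument touches the boundary point $t=0$, where $\psi''$ is not integrable when $H\le\tfrac12$; the alternative you indicate (strict concavity on $(0,\infty)$ together with continuity at $0$ and $\psi(0)=0$) handles this cleanly.
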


\begin{proof}
(a) Note that using Lemma \ref{lem:usefulrelations} we can write
\[  \psi (t) = \frac{2\Gamma(2H)}{(2\lambda)^{2H}}-\frac{2\Gamma(H+\frac 12)}{\sqrt{\pi} (2\lambda)^{H}}t^H K_{H}(\lambda t)= \frac{2\Gamma(2H)}{(2\lambda)^{2H}} - \frac{2\Gamma(H+\frac 12)}{\sqrt{\pi} 2^{H}\lambda^{2H}} (\lambda t)^{H} K_{H}(\lambda t). \] Hence, using relation  $\frac{d}{dx} (x^\nu K_\nu (x))= - x^\nu K_{\nu -1}(x)$ for all $\nu \in \R$ (see e.g., Appendix in \cite{Robert-Bessel-functions}), we can immediately deduce that
\begin{align}
\psi'(t) & =  \frac{2\Gamma(H+\frac 12)}{\sqrt{\pi} 2^{H}\lambda^{2H}} \lambda (\lambda t)^{H} K_{H-1} (\lambda t) = \frac{2\Gamma(H+\frac 12)}{\sqrt{\pi} 2^{H}\lambda^{2H-2}} \big\{ t (\lambda t)^{H-1}K_{H-1}(\lambda t) \big\},\\
\psi''(t) & = \frac{2\Gamma(H+\frac 12)}{\sqrt{\pi} 2^{H}\lambda^{2H-2}}  \Big\{    (\lambda t)^{H-1}K_{H-1}(\lambda t) - \lambda t (\lambda t)^{H-1} K_{H-2}(\lambda t) \Big\} \nonumber\\
& = \frac{2\Gamma(H+\frac 12)}{\sqrt{\pi} 2^{H}\lambda^{2H-2}} (\lambda t)^{H-1} \Big\{ K_{H-1}(\lambda t) - (\lambda t) K_{H-2} (\lambda t) \Big\}\\
&  = -\frac{2\Gamma(H+\frac 12)}{\sqrt{\pi} 2^{H}\lambda^{2H-2}} (\lambda t)^{H-1}  \Big(  (\lambda t) K_{H-1}(\lambda t) \Big) \times \Big( \frac{K_{H-2}(\lambda t)}{K_{H-1}(\lambda t)}  - \frac{1}{\lambda t} \Big).
\end{align}
It is well known that $K_\nu (x) >0$ for every $x>0$ and real $\nu \in \R$. Let $\nu=H-1$. Therefore, it is enough to understand  the sign of the quantity,
\begin{equation}\label{eq:T1-H<1/2-sign}
f(x):= \frac{K_{\nu-1}(x)}{K_\nu (x)}  - \frac{1}{x}.
\end{equation}

 Let $\nu< - 1/2$, or equivalently $H<1/2$.  In this case \cite[Proposition 4.5]{Bessel-Ratio-PAMS} contains  a sharp estimate that can be used to rewrite function $f$ as
\[ f(x)= \frac{K_{\nu-1}(x)}{K_\nu (x)}  -\frac{1}{x} =  \frac{K_{\mu+1}(x)}{K_\mu (x)} - \frac{1}{x} > 1, \quad \forall \, x >0,  \]
where $\mu=-\nu > 1/2$. Hence, function $f$ stays positive over the whole interval $(0,\infty)$. This means that the  noise TFGN is globally negatively correlated. For $H=1/2$ situation is very simple: covariance function equals
$$\E B^I_{1/2,\lambda}(t)B^I_{1/2,\lambda}(s)=\frac{1}{2\lambda}\left(e^{-\lambda|t-s|}-e^{-\lambda t}-e^{-\lambda s}+1\right),$$
whence $$\gamma^I (k)=\frac{1}{\lambda} e^{-\lambda|k|}\left(1-\cosh \lambda\right)<0. $$ It also means that the  noise TFGN is globally negatively correlated.

(b)  Let $H>1/2$. Integrating by parts,  we can rewrite the kernel $g^{I\!I}_{\lambda,\alpha,j}$ as $$g^{I\!I}_{\lambda,\alpha,j}(x)=\alpha\int_{j}^{j+1} e^{-\lambda(s-x)_{+}}(s-x)_{+}^{\alpha-1} ds>0,$$
and the proof immediately follows. When $H=1/2$, the tempered fractional Brownian motion of the second kind coincides with a Brownian motion, and hence the claim follows at once.

\end{proof}
\begin{rem}
	Item (a) from  Lemma \ref{lem:Positive-Negative-Correlation} reveals that TFGN  is globally negatively correlated provided that $H \in (0,1/2]$ no matter what tempering parameter $\lambda$ is. However,  when Hurst parameter $H > 1/2$ a switching regime  takes place that can be useful in modeling. More precisely, certainly there are time points $t^* = t^* (H,\lambda)  \le t^{**}=t^{**}(H,\lambda)$ so that TFGN  is positively correlated for every continuous lags $t< t^*$ and negatively correlated for all the lags $t>t^{**}$.  Although, we were unable to prove that one can take $t^* = t^{**}$ however, our numerical MATALAB observations illustrate that this is in fact the case.  The main obstacle in front to verify the uniqueness of time point where the TFGN   switches from positive correlation to negative correlation is to show the strict monotonicity (increasing) of the function  $$\frac{K_{\mu+1}(x)}{K_{\mu} (x)}  - \frac{1}{x}$$  over the interval $(0,1)$ provided that $\mu \in [0,1/2)$.
	As in the Breuer--Major Theorem we are interested in the behavior of the noise in the discrete clock, therefore positions of the critical times $t^* $ and $t^{**}$ are very significant, and that heavily depends on the Hurst parameter $H>0$ as well as the tempering parameter $\lambda>0$. For example, when $H = 3/2$, it can be shown that $t^* = t^{**} = \frac{1}{\lambda}$, and  that
	\begin{align}
	\psi''(t) & > 0, \quad  \text{ for } \quad t \in (0,\frac{1}{\lambda}),\\
	\psi''(t) & <0, \quad  \text{ for } \quad t > \frac{1}{\lambda}.
	\end{align}
	Hence, TFGN admits at the same time positive and negative correlation of discrete lags depending on the range of tempering parameter $\lambda$.  We also feel that similar switching regime phenomenon takes places for TFGNII   when $H <1/2$.

\end{rem}
Now we are in position to investigate the asymptotic behavior of the increments of TFGN and TFGNII at large lags.

\begin{prop}\label{prop:TFLN}
 Then we claim the following asymptotic behavior of covariances.

\begin{itemize}
\item[$(a)$]  For any  $\alpha > -\frac{1}{2}$,
\begin{equation}\label{x-2}\gamma^{I}(j)\sim -\frac{2\Gamma(\alpha+1)( \cosh \lambda-1 )}{  (2\lambda)^{\alpha+1}} e^{-\lambda j} j^{\alpha} \end{equation} as $j\to\infty$. It means that asymptotically TFGN has   negative correlation for any $\alpha > -\frac{1}{2}$ (compare to Lemma \ref{lem:Positive-Negative-Correlation}).  In particular, $\gamma^I \in \ell^q(\Z) $ for every $q \ge 1$.

\item[$(b)$]  For any  $\alpha > -\frac{1}{2}$,  $$\gamma^{I\!I}(j)  \sim (2e^\lambda-1) (2\lambda)^{-\alpha-1}  \Gamma(\alpha+1) e^{-\lambda j } j^{ \alpha -1 }$$ as    $j\to\infty$. It means that asymptotically TFGNII has   positive correlation (compare to Lemma \ref{lem:Positive-Negative-Correlation}).   In particular, $\gamma^{I\!I} \in \ell^q(\Z) $ for every $q \ge 1$.
 \end{itemize}
\end{prop}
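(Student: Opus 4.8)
The plan is to extract the precise exponential-times-power-law asymptotics of $\gamma^{J}(j)$, $J=I,II$, directly from the moving average representations \eqref{eq:TFGNmoving} and \eqref{eq:TFGNIImoving}, exploiting that for large $j$ the overlap of the kernels $g^{J}_{\lambda,\alpha,0}$ and $g^{J}_{\lambda,\alpha,j}$ is concentrated near $x=0$, where $g^{J}_{\lambda,\alpha,j}(x) \approx e^{-\lambda j}$ times an explicit power of $j$. Concretely, for part $(a)$ I would write
\[
\gamma^{I}(j) = \int_{\R} g^{I}_{\lambda,\alpha,0}(x)\, g^{I}_{\lambda,\alpha,j}(x)\, dx,
\]
and observe that $g^{I}_{\lambda,\alpha,0}$ is supported on $(-\infty,1]$ while on that range, for $j$ large, one has $g^{I}_{\lambda,\alpha,j}(x) = e^{-\lambda(j-x)}(j-x)^{\alpha} - e^{-\lambda(j-1-x)}(j-1-x)^{\alpha}$ with no positive-part truncation active; factoring out $e^{-\lambda j} j^{\alpha}$ and using $(j-x)^{\alpha}\to j^{\alpha}$, $(j-1-x)^{\alpha} \sim j^{\alpha}$ uniformly on compacts, the bracket converges to $e^{\lambda x}(1 - e^{\lambda}) \cdot j^{\alpha} e^{-\lambda j}$. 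Then by dominated convergence (the domination being routine: $|g^{I}_{\lambda,\alpha,0}(x)|$ is integrable against $e^{\lambda x}$ on $(-\infty,1]$, since near $-\infty$ the kernel decays like $|x|^{\alpha-1}e^{\lambda x}$ and near the endpoints it is integrable),
\[
e^{\lambda j} j^{-\alpha}\, \gamma^{I}(j) \To (1-e^{\lambda})\int_{-\infty}^{1} g^{I}_{\lambda,\alpha,0}(x)\, e^{\lambda x}\, dx.
\]
The remaining task is to evaluate that integral in closed form and check it equals $-\tfrac{2\Gamma(\alpha+1)(\cosh\lambda-1)}{(2\lambda)^{\alpha+1}} / (1-e^{\lambda})$; splitting the kernel into its two terms and substituting gives Gamma-function integrals $\int_0^\infty u^{\alpha} e^{-2\lambda u}\,du = \Gamma(\alpha+1)(2\lambda)^{-\alpha-1}$, and combining the factors $(1-e^{\lambda})(1-e^{-\lambda}) = -(e^{\lambda/2}-e^{-\lambda/2})^2 = -2(\cosh\lambda - 1)$ produces exactly \eqref{x-2}.

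For part $(b)$, the same scheme applies but the kernel $g^{I\!I}_{\lambda,\alpha,j}$ carries the extra integral term $\lambda\int_j^{j+1} e^{-\lambda(s-x)_+}(s-x)_+^{\alpha}\,ds$; after integrating by parts as in the proof of Lemma \ref{lem:Positive-Negative-Correlation}(b), $g^{I\!I}_{\lambda,\alpha,j}(x) = \alpha\int_j^{j+1} e^{-\lambda(s-x)_+}(s-x)_+^{\alpha-1}\,ds$, so for large $j$ and $x$ in a fixed compact set this is $\approx \alpha e^{-\lambda j} e^{\lambda x} j^{\alpha-1}\int_0^1 e^{-\lambda s'}\,ds' = \alpha e^{-\lambda j} e^{\lambda x} j^{\alpha-1}\cdot\frac{1-e^{-\lambda}}{\lambda}$ — this is why the power drops to $j^{\alpha-1}$. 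Multiplying by $g^{I\!I}_{\lambda,\alpha,0}(x)$ (also integrable against $e^{\lambda x}$), dominated convergence gives $e^{\lambda j} j^{1-\alpha}\gamma^{I\!I}(j) \to$ an explicit constant, which after the Gamma-integral evaluation and simplification of the resulting exponential prefactors yields $(2e^{\lambda}-1)(2\lambda)^{-\alpha-1}\Gamma(\alpha+1)$. The $\ell^q$ statements in both parts are then immediate: $\gamma^{J}(j) \asymp e^{-\lambda|j|} |j|^{\beta}$ with $\beta\in\{\alpha,\alpha-1\}$ is summable to any power because of the exponential factor, and the finitely many small-$|j|$ terms are harmless.

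The main obstacle is not conceptual but bookkeeping: producing a clean, uniform integrable dominating function valid for all large $j$ so that dominated convergence applies to $e^{\lambda j} j^{-\alpha} g^{J}_{\lambda,\alpha,0}(x) g^{J}_{\lambda,\alpha,j}(x)$ on the full line. One must handle separately the region $x\to -\infty$ (where both the decay of $g^{J}_{\lambda,\alpha,0}$ like $|x|^{\alpha-1}$ for $\alpha<\tfrac12$, or boundedness for $\alpha\geq\tfrac12$, and the factor $e^{\lambda x}$ coming from $g^{J}_{\lambda,\alpha,j}$ must be combined, using that $(j-x)^{\alpha} \le C(j^{\alpha} + |x|^{\alpha})$ to keep the $j$-normalization controlled), the compact region near $x=0$ and $x=1$ where the power-function singularities of $g^{J}_{\lambda,\alpha,0}$ are integrable, and the strip $x\in(0,1)$ where the positive parts behave differently. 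An alternative, fully rigorous route avoiding some of this is to use the explicit covariance formula \eqref{eq:covar} together with the known large-$z$ asymptotic expansion of $K_{H}(z)\sim\sqrt{\pi/2}\,z^{-1/2}e^{-z}(1+O(1/z))$ (for part $(a)$) and the hypergeometric/Bessel-type representation behind \eqref{eq:CtTFBIIMDef} (for part $(b)$): substituting the two- or three-term expansions of $(C^{J}_{|k\pm1|})^2$ and $(C^{J}_{|k|})^2$ into $\gamma^{J}(k)$ and carefully cancelling the leading orders — a discrete second difference of $e^{-\lambda k}k^{\gamma}$-type sequences — reproduces the same constants; I would present the moving-average argument as the main proof and mention this second computation as a cross-check.
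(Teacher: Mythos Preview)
Your approach is essentially the one the paper takes: write $\gamma^{J}(j)=\int g^{J}_{0}\,g^{J}_{j}$, factor $e^{-\lambda j}j^{\beta}$ (with $\beta=\alpha$ for $J=I$ and $\beta=\alpha-1$ for $J=I\!I$) out of $g^{J}_{j}$, apply dominated convergence, and evaluate the remaining integral $\int e^{\lambda x}g^{J}_{0}(x)\,dx$ via Gamma integrals. The only cosmetic differences are that the paper first performs an explicit substitution to reduce part~(a) to a single integral over $[0,\infty)$ before passing to the limit, and in part~(b) extracts the leading term of $g^{I\!I}_{j}$ by a first-order Taylor expansion rather than via your integrated form $\alpha\int_j^{j+1}e^{-\lambda(s-x)}(s-x)^{\alpha-1}ds$; watch the off-by-one in your indices (the kernel involves $j+1,j$, not $j,j-1$), but the argument and the constants come out the same.
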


\begin{proof}
 \begin{itemize}
\item[$(a)$] The following transformations are immediate:
\begin{align*}
\gamma^{I}(j)&=\mathbb{E}\bigg(\int_{-\infty}^{j+1}\left(e^{-\lambda(j+1-x)}(j+1-x)^{\alpha} -e^{-\lambda(j-x)_+}(j-x)^{\alpha}_+\right)dB(x)\\
& \quad \times\int_{-\infty}^{1}\left(e^{-\lambda(1-x)}( 1-x)^{\alpha} -e^{-\lambda( -x)_+}( -x)^{\alpha}_+\right)dB(x)\bigg)
\\
&=\int_{-\infty}^{1}\left(e^{-\lambda(j+1-x)}(j+1-x)^{\alpha} -e^{-\lambda(j-x)}(j-x)^{\alpha}\right)\\
&\quad \times \left(e^{-\lambda(1-x)}( 1-x)^{\alpha} -e^{-\lambda( -x)_+}( -x)^{\alpha}_+\right)d x \\
&=e^{-\lambda j}\bigg(\int_{0}^{\infty} e^{-2\lambda z}z^{\alpha}\left( (j+z)^{\alpha} -e^\lambda(j-1+z)^{\alpha}\right)dz\\
& \quad -
\int_{0}^{\infty} e^{-2\lambda z}z^{\alpha}\left( e^{-\lambda}(j+1+z)^{\alpha} -(j+z)^{\alpha}\right)dz\bigg) \\
&=e^{-\lambda j}j^{\alpha}\int_{0}^{\infty}e^{-2\lambda z}z^{\alpha}\bigg(2\left(1+\frac{ z}{j}\right)^{\alpha}- e^{-\lambda}\left(1+\frac{z+1}{j}\right)^{\alpha}-e^{\lambda}\left(1+\frac{z-1}{j}\right)^{\alpha}\bigg)dz.
\end{align*}
  Consider the value in the bracket $$2\left(1+\frac{ z}{j}\right)^{\alpha}- e^{-\lambda}\left(1+\frac{z+1}{j}\right)^{\alpha}-e^{\lambda}\left(1+\frac{z-1}{j}\right)^{\alpha}.$$
  It tends to  $2-e^\lambda-e^{-\lambda}$ as $j\rightarrow\infty$, and for $j\geq 2$ is bounded by
  $$(2\left(1+ { z} \right)^{\alpha} +  e^{-\lambda}\left(2+ z \right)^{\alpha}+e^{\lambda}z^{\alpha})\vee(2+2e^\lambda+e^{-\lambda}).$$
  It means that we can apply Lebesgue dominated convergence theorem and get $(a)$.
\item[$(b)$] Denote   $$g_j(x):=e^{-\lambda(j+1-x)_{+}}(j+1-x)_{+}^{\alpha}-e^{-\lambda(j-x)_{+}}(j-x)_{+}^{\alpha}
 +\lambda \int_{j}^{j+1} e^{-\lambda(s-x)_{+}}(s-x)_{+}^{\alpha} ds,$$
 then, by the similar calculations as in the part (a),  $\gamma^{I\!I}(j)=\int_{-\infty}^{1}g_j(x)g_0(x)dx$.
 So, our goal is to study the asymptotic behavior of $g_j(x)$. Note that on the interval $(-\infty,1]$
 \begin{equation*}
\begin{split}
g_j(x)&=e^{-\lambda(j+1-x)}(j+1-x)^{\alpha}-e^{-\lambda(j-x)}(j-x)^{\alpha}
 +\lambda \int_{j}^{j+1} e^{-\lambda(s-x)}(s-x)^{\alpha} ds\\&=e^{-\lambda j}j^\alpha\left(e^{-\lambda(1-x)}\left(1+\frac{1-x}{j}\right)^{\alpha}-e^{\lambda x }\left(1-\frac{x}{j}\right)^{\alpha}
 +\lambda \int_{0}^{1} e^{-\lambda(z-x)}\left(1+\frac{z-x}{j}\right)^{\alpha} dz\right).
\end{split}
\end{equation*}
Applying Taylor expansion to the terms $\left(1+\frac{1-x}{j}\right)^{\alpha}, \left(1-\frac{x}{j}\right)^{\alpha}$ and
$\left(1+\frac{z-x}{j}\right)^{\alpha}$, and integrating the last integral by parts, we get that

\begin{equation*}
\begin{split}
 g_j(x)&=e^{-\lambda j}j^\alpha\bigg(e^{-\lambda(1-x)}\left(1+\alpha\frac{1-x}{j}\right) -e^{\lambda x }\left(1-\alpha\frac{x}{j}\right)
 \\&+\lambda \int_{0}^{1} e^{-\lambda(z-x)}\left(1+\alpha\frac{z-x}{j}\right)  dz\bigg)+h_j(x)=
 \alpha\lambda^{-1}(1-e^{-\lambda}) e^{-\lambda j}j^{\alpha-1}e^{\lambda x}+h_j(x),
 \end{split}
 \end{equation*}
where  $h_j(x)=j^{-2}h(x)$, and $h(x)$ is, up to a constant multiplier, of order $e^{\lambda x}x^2$.
Applying again the  Lebesgue dominated convergence theorem, we get that
$$\gamma^{I\!I}(j)\sim \alpha\lambda^{-1}(1-e^{-\lambda}) e^{-\lambda j}j^{\alpha-1}\int_{-\infty}^{1} e^{\lambda x}g_0(x)dx.$$
As regards the last integral, it equals
\begin{equation*}
\begin{split}\int_{-\infty}^{1} e^{\lambda x}g_0(x)dx&=\int_{-\infty}^{1} e^{\lambda x}\left( e^{-\lambda(1-x)}(1-x)^{\alpha}-e^{-\lambda(-x)_{+}}(-x)_{+}^{\alpha}
 +\lambda \int_{x}^{1} e^{-\lambda(s-x)}(s-x)^{\alpha} ds\right)dx\\&=(e^\lambda-1)\int_{0}^{\infty}e^{-2\lambda z}z^\alpha dz+\lambda e^\lambda\int_{0}^{\infty}e^{-\lambda u}\int_0^ue^{-\lambda z}z^\alpha dzdu\\&
 =(2e^\lambda-1) (2\lambda)^{-\alpha-1}  \Gamma(\alpha+1),
 \end{split}
 \end{equation*}
 and the proof follows.
 \end{itemize}
 \end{proof}

\begin{lem}\label{Hermite} Let $Y^{I}(j)=(C^I_1)^{-1}X^{I}_{\alpha,\lambda}(j)$  and $Y^{I\!I}(j)=(C_1^{I\!I})^{-1}X^{I\!I}_{\alpha,\lambda}(j)$ be normalized tempered fractional Gaussian noises with associated normalizing constants $C^I_1$ and $C^{I\!I}_1$, appearing in Lemma \ref{lem:usefulrelations}, and covariance functions $\gamma^I$ and $\gamma^{I\!I}$, respectively. Let $V^J_{n,q}=\frac{1}{\sqrt{n} }\sum_{k=1}^{n} H_q(Y^J(k)), J=I,I\!I $, where $H_q$ stands for the $q$th Hermite polynomial.
Then  \begin{equation}\begin{gathered}\label{orieq}
\sigma^2_{n,J,q} : = \Var \left( V^J_{n,q}\right) = \frac{q!}{n} \,   (C_1^J)^{-2q}  \sum_{k,l=1}^{n} \left(\gamma^J (k-l)\right)^q\longrightarrow \sigma^2_{J,q,H,\lambda}:=  q!  \,  (C_1^J)^{-2q}  \sum_{k \in \Z} \left(\gamma^J (k)\right)^q < +  \infty.
\end{gathered}\end{equation}
  Furthermore we can guarantee this value is strictly positive provided that
\begin{itemize}
	\item[(a)] $J=I,I\!I $ and  $q$ is even.
	\item[(b)] $J=I$, $H\in (0,1/2]$ and $q >1$.
	\item[(c)] $J=I\!I$, $H \ge 1/2$ and $q >1$.
	\end{itemize}	

\end{lem}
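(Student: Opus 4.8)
The plan is to establish three things in turn: the exact formula for $\sigma^2_{n,J,q}$, the convergence $\sigma^2_{n,J,q}\to\sigma^2_{J,q,H,\lambda}<\infty$, and the strict positivity in cases $(a)$--$(c)$. For the first, I would start from $\gamma^J(0)=\E[(X^J_{\alpha,\lambda}(0))^2]=\E[(B^J_{H,\lambda}(1)-B^J_{H,\lambda}(0))^2]=\E[(B^J_{H,\lambda}(1))^2]=(C^J_1)^2$, using $B^J_{H,\lambda}(0)=0$ and the relation $\E[(X_{H,\lambda}(|t|))^2]=|t|^{2H}C^2_t$ at $t=1$; hence each $Y^J(k)$ is a standard Gaussian, $\E[V^J_{n,q}]=0$ for $q\ge1$, and $\E[Y^J(k)Y^J(l)]=(C^J_1)^{-2}\gamma^J(k-l)$. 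Applying the classical identity $\E[H_q(U)H_q(V)]=q!\,(\E[UV])^q$ for jointly standard Gaussian $U,V$ (see e.g.\ \cite{Nourdin-Peccati-Bible}) with $U=Y^J(k)$, $V=Y^J(l)$, then summing over $k,l\in\{1,\dots,n\}$ and dividing by $n$, yields $\sigma^2_{n,J,q}=\frac{q!}{n}(C^J_1)^{-2q}\sum_{k,l=1}^n(\gamma^J(k-l))^q$.

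For the convergence, Proposition \ref{prop:TFLN} tells us that $\gamma^J$ decays exponentially, so $\gamma^J\in\ell^q(\Z)$ for every $q\ge1$, that is $\sum_{k\in\Z}|\gamma^J(k)|^q<\infty$. Rewriting the double sum through the index $m=k-l$ as $\sum_{|m|<n}(1-|m|/n)\,(\gamma^J(m))^q$ and invoking dominated convergence with dominating function $|\gamma^J(m)|^q$, one gets $\frac1n\sum_{k,l=1}^n(\gamma^J(k-l))^q\to\sum_{m\in\Z}(\gamma^J(m))^q$, which is finite; multiplying by $q!(C^J_1)^{-2q}$ then identifies $\sigma^2_{J,q,H,\lambda}$ as this limit, so $\sigma^2_{n,J,q}\to\sigma^2_{J,q,H,\lambda}<\infty$.

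The positivity is the heart of the matter. Write $\sigma^2_{J,q,H,\lambda}=q!\,(C^J_1)^{-2q}S_q$ with $S_q:=\sum_{k\in\Z}(\gamma^J(k))^q$, noting that the $k=0$ term alone contributes $(C^J_1)^{-2q}(\gamma^J(0))^q=1$. If $q$ is even all summands are nonnegative, so $S_q\ge(\gamma^J(0))^q>0$, which gives $(a)$. Case $(c)$ is equally immediate: Lemma \ref{lem:Positive-Negative-Correlation}$(b)$ gives $\gamma^{I\!I}(k)>0$ for every $k$ when $H>1/2$, while for $H=1/2$ only the $k=0$ term is nonzero, so $S_q>0$ in either case. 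The delicate case is $(b)$ with $q$ odd (its even subcase being covered by $(a)$): by Lemma \ref{lem:Positive-Negative-Correlation}$(a)$ we have $\gamma^I(k)<0$ for all $k\ne0$ when $H\in(0,1/2]$, so $S_q=(\gamma^I(0))^q-\sum_{k\ne0}|\gamma^I(k)|^q$ and cancellation could in principle make this vanish. The extra ingredient I would supply is $\sum_{k\in\Z}\gamma^I(k)=0$: telescoping gives $B^I_{H,\lambda}(m)=\sum_{j=0}^{m-1}X^I_{\alpha,\lambda}(j)$, hence $\E[B^I_{H,\lambda}(m)^2]=m\sum_{|l|<m}\gamma^I(l)-\sum_{|l|<m}|l|\,\gamma^I(l)$; both sums converge as $m\to\infty$ by the exponential decay, while by Proposition \ref{lem:usefulrelations 1}$(a)$ the left-hand side tends to the finite constant $2\Gamma(2H)(2\lambda)^{-2H}$, which forces the coefficient of $m$ to be $0$. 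Therefore $\gamma^I(0)=\sum_{k\ne0}|\gamma^I(k)|$, and since $q>1$ and $a_k:=|\gamma^I(k)|>0$ for all $k\ne0$, the strict inequality $\big(\sum_{k\ne0}a_k\big)^q>\sum_{k\ne0}a_k^q$ (equivalently $\|a\|_1^q>\|a\|_q^q$, which holds whenever $q>1$ and $a$ has at least two nonzero coordinates) gives $S_q=\big(\sum_{k\ne0}a_k\big)^q-\sum_{k\ne0}a_k^q>0$. The routine parts of the argument are the Hermite variance formula and the Ces\`{a}ro-type passage to the limit; the real obstacle is precisely this last step, where one must extract the cancellation $\sum_k\gamma^I(k)=0$ and then use that a genuine $\ell^1$-versus-$\ell^q$ gap rescues positivity in the negatively correlated regime.
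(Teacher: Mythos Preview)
Your proof is correct and follows essentially the same route as the paper's: the Hermite covariance identity for the variance formula, the Ces\`aro rewriting plus dominated convergence for the limit, and Lemma~\ref{lem:Positive-Negative-Correlation} together with the identity $\sum_{k\in\Z}\gamma^I(k)=0$ for strict positivity. The only difference is cosmetic, in the last step of part~(b): the paper observes that each normalized covariance $\rho(k)=\gamma^I(k)/(C^I_1)^2$ lies in $(-1,0)$ and uses the pointwise inequality $\rho(k)^q>\rho(k)$ for $q>1$, whereas you package the same information as the norm inequality $\|a\|_1^q>\|a\|_q^q$ for a sequence with at least two nonzero entries; both arguments rest on the same cancellation $\sum_k\gamma^I(k)=0$, which you in fact justify more explicitly than the paper does.
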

 \begin{proof} Finiteness of the sum $\sum_{k \in \Z}\left|\gamma^I (k)\right|^q$ follows from Proposition \ref{prop:TFLN}. The first equality in the relation \eqref{orieq} is mentioned, e.g.,  in the proof of  Theorem 7.2.4 \cite{n-p-AOP-Exact-Asymptotic}. Obviously,  $$\sigma^2_{n,J,q}  = q! \, (C^J_1)^{-2q}   \sum_{\vert k \vert < n}  \left(1 - \frac{\vert k \vert }{n}\right) \left(\gamma^J (k)\right)^q,$$
 this sum is nonnegative, and for even $q$ the value $\sigma^2_{n,J,q}$ strictly increases in $n$ therefore its limit is strictly positive.  For odd  $q$   we can state that the limit exists due to the dominated convergence theorem and  finiteness of the sum $\sum_{k \in \Z}\left|\gamma^I (k)\right|^q$, and this limit is obviously   nonnegative.
 For strict positivity of the limiting variance, part (a) is clear. (b)  First note that Proposition \ref{prop:TFLN} part (a) yields that $\gamma^I \in L^1 (\Z)$ is an absolutely convergent sum, and hence by a telescopic argument, we can write
 	\begin{equation}\label{eq:>0-1}
 	\sum_{k \in \Z} \frac{\gamma^I (k)}{(C^I_1)^2} = 1+ 2 \sum_{k \ge 1} \frac{\gamma^I (k)}{(C^I_1)^2} =0, \quad  \Longrightarrow \quad  \sum_{k \ge 1} \frac{\gamma^I (k)}{(C^I_1)^2}  = -1/2.
 	\end{equation}
 Now Lemma \ref{lem:Positive-Negative-Correlation} item (a) implies that $ \frac{\gamma^I (k)}{(C^I_1)^2} \in (-1,0)$ for every $0 \neq k \in \Z$. Let $q>1$ be an arbitrary integer. Then $$ \left(  \frac{\gamma^I (k)}{(C^I_1)^2} \right)^q >  \frac{\gamma^I (k)}{(C^I_1)^2}, \qquad  k \ge 1.$$ Therefore,
 \begin{align*}
 \sum_{k\in\Z}     \frac{(\gamma^I (k))^q}{(C^I_1)^{2q}}   =  1+ 2 \sum_{k \ge 1}   \frac{(\gamma^I (k))^q}{(C^I_1)^{2q}} > 1 + 2 \sum_{k \ge 1}   \frac{\gamma^I (k)}{(C^I_1)^{2}}=0.
 \end{align*}
 (c) It is clear due to Lemma \ref{lem:Positive-Negative-Correlation}, part (b).
 \end{proof}

\begin{rem}
Meerschaert and Sabzikar \cite[Remark 4.1]{Meerschaertsabzikar} pointed out that the covariance function $\gamma^{I}$  of TFGN of the first kind behaves  asymptotically as $\frac{2H(2H-1)\Gamma(2H)}{(2\lambda)^{2H}}j^{-2}$ for large lags $j$. However, part (a) of Proposition \ref{prop:TFLN} carefully shows that the asymptotic behavior of TFGN should be represented as $-\frac{2\Gamma(H+1/2)( \cosh \lambda-1 )}{  (2\lambda)^{H+1/2}} e^{-\lambda j} j^{H-1/2}$ for large lags  $j$.
\end{rem}

 \subsection{CLTs for the tempered fractional Gaussian noise processes}
As an application of the analysis of the behavior of the noise  covariance, we can derive the   CLT for the tempered fractional Gaussian noise processes.
Our first result treats the Gaussian fluctuations of the tempered fractional Gaussian noises in the setup of the Breuer--Major theorem. 
\begin{thm}[Breuer--Major theorem for tempered fractional Gaussian noises]\label{thm:Breuer-Major_tempered}
Let $\gamma(dx) = \frac{1}{\sqrt{2\pi}}e^{-x^2/2} dx$ denote the standard Gaussian measure on the real line. Assume that $f \in L^2(\R,\gamma)$ be a centered function, i.e. $\E_\gamma[f]=0$, with  Hermite rank  $d \ge 1$, meaning that, $f$ admits the Hermite expansion $f(x) = \sum_{j=d}^{\infty} a_j H_j(x)$ with $a_d \neq 0$.
   We have that
\begin{equation*}
V^J_{n,d,H,\lambda} := \frac{1}{\sqrt{n}}\sum_{k=1}^{n}f(Y^{J}_j) \limd \mathcal{N}(0, {\sigma^{2}_{J,H,\lambda,d}}),
\end{equation*}
with
\begin{equation}\label{eq:Target-Variance-TemperedI}
\sigma^{2}_{J,H,\lambda,d}=\sum_{q=d} ^\infty\frac{q!}{2^{q}} a_q^2\sigma^2_{J,q,H,\lambda} \in [0,\infty),
\end{equation}
where $\sigma^2_{J,q,H,\lambda}$ is introduced in \eqref{orieq}.

In any of the following cases: (a) $J=I,II$ and  $a_q\neq0$ for at least one of  even $q$; (b) $J=I $ and $H\leq 1/2$;
(c) $J=II $ and  {$H\ge1/2$} we claim that $\sigma^{2}_{J,H,\lambda,d}>0$.
\end{thm}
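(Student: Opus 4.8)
The plan is to read the statement off the classical Breuer--Major theorem (\cite{Breuer}; see \cite[Theorem 7.2.4]{Nourdin-Peccati-Bible}), every hypothesis of which has already been secured by the preceding results. First I would note that, for $J=I,II$, the normalized noise $\{Y^J(k)\}_{k\in\Z}$ is a centered stationary Gaussian sequence with $\E[Y^J(k)^2]=(C^J_1)^{-2}\gamma^J(0)=1$ (because $\gamma^J(0)=\E[(B^J_{H,\lambda}(1))^2]=(C^J_1)^2$) and covariance $r^J(k):=\E[Y^J(0)Y^J(k)]=(C^J_1)^{-2}\gamma^J(k)$, so that $|r^J(k)|\le 1$. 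By Proposition \ref{prop:TFLN} one has $\gamma^J\in\ell^q(\Z)$ for every $q\ge 1$, hence in particular $\sum_{k\in\Z}|r^J(k)|^d<\infty$, where $d\ge 1$ is the Hermite rank of $f$; this is precisely the summability hypothesis of the Breuer--Major theorem. Its conclusion gives $V^J_{n,d,H,\lambda}\limd\mathcal N(0,\sigma^2_{J,H,\lambda,d})$: expanding $f=\sum_{q\ge d}a_qH_q$ and using the orthogonality of $\{H_q(Y^J(k))\}_k$ across distinct $q$, one has $\Var(V^J_{n,d,H,\lambda})=\sum_{q\ge d}a_q^2\,\sigma^2_{n,J,q}$, which by Lemma \ref{Hermite} tends to $\sum_{q\ge d}a_q^2\,\sigma^2_{J,q,H,\lambda}$, the variance $\sigma^2_{J,H,\lambda,d}$ of \eqref{eq:Target-Variance-TemperedI}. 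Finiteness of this series follows from $\sigma^2_{J,q,H,\lambda}\le q!\sum_{k\in\Z}|r^J(k)|^q\le q!\sum_{k\in\Z}|r^J(k)|^d$ for $q\ge d$, together with $\sum_q q!\,a_q^2=\|f\|_{L^2(\gamma)}^2<\infty$.

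For completeness I would recall how the reduction to \cite[Theorem 7.2.4]{Nourdin-Peccati-Bible} goes. For a single chaos $f=H_q$ one writes $V^J_{n,q}=I_q(g_{n,q})$, with $g_{n,q}$ an explicit symmetric kernel in the $q$th Wiener chaos, and checks via the fourth moment theorem that the relevant contraction norms $\|g_{n,q}\otimes_r g_{n,q}\|$, $1\le r\le q-1$, vanish as $n\to\infty$; these are dominated by convolution sums of $|r^J|$ that go to $0$ since $r^J\in\ell^1(\Z)$ (Proposition \ref{prop:TFLN}), and Lemma \ref{Hermite} supplies the limiting variance $\sigma^2_{J,q,H,\lambda}$. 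The passage from a single chaos to general $f\in L^2(\gamma)$ is the standard truncation: control $\sum_{q>N}a_qH_q$ uniformly in $n$ using $\sum_q q!a_q^2<\infty$ and the summability of $r^J$, and use the joint asymptotic independence of the chaos components (so the variances simply add). In practice one may just quote \cite[Theorem 7.2.4]{Nourdin-Peccati-Bible}, since all its assumptions hold here.

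It remains to show $\sigma^2_{J,H,\lambda,d}>0$ in cases (a)--(c). As $\sigma^2_{J,H,\lambda,d}$ is a sum of nonnegative terms (each $\sigma^2_{J,q,H,\lambda}\ge 0$, being a limit of variances by \eqref{orieq}), it is enough to find one index $q\ge d$ with $a_q\neq 0$ and $\sigma^2_{J,q,H,\lambda}>0$. In case (a) take an even $q$ with $a_q\neq 0$ and invoke Lemma \ref{Hermite}(a), which applies to both $J=I,II$. In case (b) ($J=I$, $H\le 1/2$) take $q=d$ when $d\ge 2$, or any $q\ge 2$ with $a_q\neq 0$ when $d=1$, and invoke Lemma \ref{Hermite}(b); the one excluded configuration is $f$ a.e.\ linear, which for $J=I$ is genuinely degenerate, since then $\sigma^2_{I,1,H,\lambda}=(C^I_1)^{-2}\sum_{k\in\Z}\gamma^I(k)=0$ by the vanishing of the TFGN spectral density at the origin (equivalently, \eqref{eq:>0-1}) --- so this hypothesis should be made explicit. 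Case (c) ($J=II$, $H\ge 1/2$) is identical, via Lemma \ref{Hermite}(c). Thus the substance of the theorem lies entirely in Proposition \ref{prop:TFLN} (exponential, hence $\ell^q$--for--all--$q$, decay of the tempered noise covariances) and Lemma \ref{Hermite}; given these the argument is essentially a citation, and the only genuinely delicate point is this last borderline positivity case, where the first Wiener chaos contributes zero asymptotic variance.
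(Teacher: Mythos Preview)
Your proposal is correct and follows essentially the same route as the paper: invoke Proposition \ref{prop:TFLN} to get $\gamma^J\in\ell^d(\Z)$, apply the classical Breuer--Major theorem directly, compute the limiting variance by dominated convergence using $|r^J(k)|\le 1$ and $\sum_q q!a_q^2<\infty$, and read off positivity from Lemma \ref{Hermite}. Your extra ``for completeness'' paragraph on contractions and truncation is not needed (the paper simply cites Theorem \ref{thm:Breuer-Major}), and your careful remark on the degenerate purely linear case for $J=I$ is a legitimate refinement that the paper glosses over.
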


\begin{proof}
Consider $J=I$. First note that by applying part $(a)$ of Proposition \ref{prop:TFLN}, for every fixed $H,\lambda>0$, we have $\gamma^I \in \l^{p}(\Z)$ for all $p>0$, and in particular $\gamma^I \in \l^{d}(\Z)$ where $d$ denotes the Hermite rank. As a direct consequence, the classical Breuer--Major Theorem \ref{thm:Breuer-Major} can be applied, and in order to obtain the desired result, we are only left to compute the limiting variance.   Next, it is standard that (see e.g.,  \cite[page 131]{Nourdin-Peccati-Bible}) the dominated convergence theorem yields as $n$ tends to infinity,
  \begin{eqnarray*}
   \sigma^2_n &: = &\Var\left(V^J_{n,d,H,\lambda}  \right) = \sum_{j=d}^{\infty} j! a^2_j C^{-2j}_1   \frac{1}{n} \, \sum_{k,l=1}^{n} \gamma^I(k-l)^j
   = \sum_{j=d}^{\infty} j! a^2_j C^{-2j}_1 \sum_{\vert k \vert < n}  \left( 1 - \frac{\vert k \vert }{n} \right) \gamma^I(k)^j\\
& \longrightarrow & \sum_{j=d}^{\infty} j! a^2_j \sum_{k\in \Z} \left( C^{-2}_1 \gamma^I(k) \right)^j=: \sigma^2_{I,H,\lambda,d}.
   \end{eqnarray*}

Recall that $\vert C^{-2}_1 \gamma^I(k)\vert \le 1$ for all $k \in \Z$, and therefore one can readily infer that
\[  \sigma^2_{I,H,\lambda,d} = \sum_{j=d}^{\infty} j! a^2_j C_1^{-2j} \sum_{k\in\Z} \gamma^I(k)^j \le  C_1^{-2d} \,  \sum_{j=d}^{\infty} j! a^2_j \sum_{k\in\Z} \vert  \gamma^I(k)\vert^d  \le C_1^{-2d} \,  \Vert f \Vert^2_{L^2(\R,\gamma)} \sum_{k\in\Z} \vert  \gamma^I(k)\vert^d < +\infty. \]
Now the proof immediately follows from Lemma \ref{Hermite}.
 \end{proof}

\begin{rem}
\begin{itemize}
\item[(i)] The message of Theorem \ref{thm:Breuer-Major_tempered} is that tempering always fulfills the sufficient condition in the Breuer--Major Theorem  without assuming any extra condition on the Hurst parameter $H$ or/and the tempering parameter $\lambda$. This is in contract to the classical setup of the fractional Gaussian noise where often there is a phase transition for the validity of CLT, see \cite[Theorem 7.4.1]{Nourdin-Peccati-Bible}.
\item[(ii)] In fact, according to the second Dini's theorem \cite[Proposition C.3.2]{Nourdin-Peccati-Bible} convergences in parts $(a)$ and $(b)$ of Theorem \ref{thm:Breuer-Major_tempered} holds in the Kolmogorov topology too. Furthermore, one can show the convergence  holds in more stronger topology under some mild assumption on function $f$. This is topic of the forthcoming result.
    \end{itemize} 	
	\end{rem}

The next result aims to provide a quantitative version of the aforementioned CLTs. For given random elements $F$ and $G$ the \textit{total variation} distance, denoted by $d_{TV}$, between the laws of $F$ and $G$ defined as
\[  d_{TV} (F,G):= \sup_{A} \Big \vert \Pp (F \in A) - \Pp(G \in A) \Big \vert\]
where the supremum is taken over all the Borel subsets $A \in \mathcal{B}(\R)$ on the real line. Also, we introduce the Sobolev space $\mathbb{D}^{p,k}(\R,\gamma)$, where $p \ge 1$ and $k \in \N$, that is the closure of all polynomial mapping $f:\R \to \R$ with respect to the norm
\[ \Vert f \Vert_{p,k} := \Bigg[  \sum_{i=0}^{k} \int_{\R}\vert f^{(i)} (x) \vert^p \gamma (dx) \Bigg]^{\frac{1}{p}}.  \]
Here $f^{(0)}=f$, and $f^{(i)}$ stands for the $i$th derivative  of $f$, $i=1,\ldots, k$.

\begin{thm}\label{thm:quantitative-TV-BM-Tempered}
Let the random variable $N \sim \mathcal{N}(0,1)$,   the assumptions and notations of Theorem \ref{thm:Breuer-Major_tempered} hold, and $\sigma^{2}_{J,H,\lambda,d}>0$.
 If $f \in L^{2}(\R,\gamma)$ with $\E_\gamma[f]=0$ and  belongs to Sobolev space $\mathbb{D}^{1,4}(\R,\gamma)$, then
\begin{equation}\label{eq:TV-rate-1}
d_{TV} \left(  \frac{V^{J}_n}{\sqrt{\Var\left( V^J_n  \right)}}, N  \right) = \mathcal{O} \left(
n^{-\frac{1}{2}} \left(  \sum_{\vert \nu \vert \le n} \Big \vert \gamma^J(\nu)  \Big \vert \right)^{\frac{3}{2}}\right),\; n\rightarrow \infty.
\end{equation}
 So, $d_{TV} \left(  \frac{V^J_n}{\sqrt{\Var\left( V^J_n  \right)}}, N  \right) \le C \, n^{-\frac{1}{2}}$ for some constant $C$, and $n \ge 1$. Here $J=I, II$.
\end{thm}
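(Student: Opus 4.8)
The strategy is to invoke the quantitative fourth-moment/Malliavin--Stein machinery for functionals living in a fixed sum of Wiener chaoses, in the form adapted to the Breuer--Major setting (see \cite[Chapter 7]{Nourdin-Peccati-Bible} and the refinements in \cite{n-p-AOP-Exact-Asymptotic}). Write $f=\sum_{q\ge d} a_q H_q$ and set $F_n := V^J_n/\sqrt{\Var(V^J_n)}$, which is a (renormalized) element of $\bigoplus_{q\ge d} \mathcal H^{\oplus q}$, the isonormal Gaussian space being the one generated by $B(dx)$. Since $\sigma^2_{J,H,\lambda,d}>0$, the normalizing variance $\Var(V^J_n)$ is bounded away from $0$ and $\infty$ by Lemma \ref{Hermite}, so it suffices to bound $d_{TV}$ of the \emph{unnormalized} chaos expansion and track the $n$-dependence. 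The assumption $f\in\mathbb D^{1,4}(\R,\gamma)$ guarantees that the Hermite coefficients decay fast enough ($\sum_q q^{3}q!a_q^2<\infty$, roughly) that the infinite-chaos series can be controlled term-by-term and the tail is negligible; this is exactly the regularity needed to make the quantitative Breuer--Major bound hold with an explicit constant.

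\textbf{Key steps.} First I would recall the general bound: for a centered $f$ of Hermite rank $d$ with the stated Sobolev regularity, there is a constant $C=C(f)$ such that
\begin{equation*}
d_{TV}\left( \frac{V^J_n}{\sqrt{\Var(V^J_n)}},\, N \right) \;\le\; C \, \frac{1}{\sqrt n}\left( \sum_{|\nu|\le n} |\rho_n(\nu)| \right)^{3/2},
\end{equation*}
where $\rho_n$ is the correlation function of the underlying stationary Gaussian sequence — here $\rho(\nu) = \gamma^J(\nu)/(C^J_1)^2$, which does not depend on $n$. This is the content of the quantitative Breuer--Major theorem; its proof combines the estimate of $d_{TV}$ in terms of contraction norms $\|D L^{-1}F_n\|$ (equivalently, sums $\sum \gamma^J(\cdot)$ raised to suitable powers over diagonal-type index sets), the hypercontractivity bound that transfers control of the $2p$-th moment to the $4$-th, and a careful summation of the multi-index estimates. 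Second, substitute $\rho = \gamma^J/(C_1^J)^2$ and absorb the constant $(C_1^J)^{-?}$ into $C$, which gives the displayed rate \eqref{eq:TV-rate-1}. Third, and this is where the tempering does the work, use Proposition \ref{prop:TFLN}: for $J=I$, $\gamma^I(\nu)\asymp e^{-\lambda|\nu|}|\nu|^{H-1/2}$, and for $J=II$, $\gamma^{II}(\nu)\asymp e^{-\lambda|\nu|}|\nu|^{H-3/2}$, so in both cases
\begin{equation*}
\sum_{\nu\in\Z} |\gamma^J(\nu)| < \infty.
\end{equation*}
Hence $\left(\sum_{|\nu|\le n}|\gamma^J(\nu)|\right)^{3/2}$ is bounded uniformly in $n$, and the right-hand side of \eqref{eq:TV-rate-1} collapses to $C\,n^{-1/2}$, giving the final assertion.

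\textbf{Main obstacle.} The routine part is the summability of $\gamma^J$, already handled by Proposition \ref{prop:TFLN}, and the reduction $\Var(V^J_n)\asymp 1$, handled by Lemma \ref{Hermite} together with the positivity hypothesis $\sigma^2_{J,H,\lambda,d}>0$. The genuine difficulty is the infinite-chaos bookkeeping: the classical fourth-moment bound is cleanest for a single chaos $\mathcal H_q$, so one must justify summing the per-chaos estimates $\sum_{q\ge d} q!a_q^2 (\cdots)$ and controlling the cross terms in the total-variation estimate uniformly. This is precisely where $f\in\mathbb D^{1,4}(\R,\gamma)$ enters: it provides the summable weights needed for an interchange-of-limits/dominated-convergence argument in the chaos index, exactly as in the proof of \cite[Theorem 7.3.1 / the quantitative version in \S 7.4]{Nourdin-Peccati-Bible}. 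Once that interchange is secured, each term contributes a factor of the type $\frac1{\sqrt n}(\sum_{|\nu|\le n}|\gamma^J(\nu)|^{q/(q-1)}\cdots)$-type quantity dominated by $\frac1{\sqrt n}(\sum|\gamma^J(\nu)|)^{3/2}$ using $|\gamma^J/(C_1^J)^2|\le 1$ (Lemma \ref{lem:Positive-Negative-Correlation} / Lemma \ref{Hermite}), and the proof concludes.
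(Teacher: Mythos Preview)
Your proposal is correct and follows essentially the same route as the paper: apply the quantitative Breuer--Major bound valid under the $\mathbb{D}^{1,4}$ hypothesis, use the positivity assumption $\sigma^2_{J,H,\lambda,d}>0$ to keep the normalizing variance bounded away from zero, and then invoke Proposition~\ref{prop:TFLN} to obtain $\gamma^J\in\ell^1(\Z)$ and collapse the rate to $Cn^{-1/2}$. The paper's proof is a two-line citation of \cite[Theorem~1.2]{n-p-y19} (recalled as Theorem~\ref{thm:BM-TV-Rate} in Appendix~B), which already contains the infinite-chaos bookkeeping you flag as the ``main obstacle'', so there is no need to re-derive it; your references to \cite[Chapter~7]{Nourdin-Peccati-Bible} and \cite{n-p-AOP-Exact-Asymptotic} point to earlier versions of the same machinery, but the precise black box used here is the \cite{n-p-y19} result.
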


\begin{proof}
 Both estimates \eqref{eq:TV-rate-1} for $J=I, I\!I$ are direct consequence of \cite[Theorem 1.2]{n-p-y19} (recalling as Theorem \ref{thm:BM-TV-Rate} in Appendix  B) along with the fact that the limiting variances given by relation   \eqref{eq:Target-Variance-TemperedI}  are non zero by our assumption and bounded. Moreover, those estimate can be further controlled from above relying on the fact that $\gamma^I, \gamma^{I\!I} \in l^1(\Z)$ by Proposition \ref{prop:TFLN}.
\end{proof}

\begin{rem}  \begin{itemize}
\item[$(a)$] Clearly, Theorem \ref{thm:quantitative-TV-BM-Tempered} implies Theorem \ref{thm:Breuer-Major_tempered} under the extra assumptions that the  function  $f \in \mathbb{D}^{1,4}(\R,\gamma)$. Up to our knowledge, this is the minimal assumption required to obtain a rate of convergence in the total variation metric. It is clear that without imposing such regularity assumption any reasonable rate of convergence in the total variation distance is implausible.
	\item[$(b)$]	  In general, it is an open problem in the field to provide the similar lower rate of the convergence, namely, to establish that  for some positive constant $C>0$
		\begin{equation*}
	  d_{TV} \left(  \frac{V^{J}_n}{\sqrt{\Var\left( V^{J}_n  \right)}}, N  \right)  \ge 	C \,  n^{-\frac{1}{2}}, J=I,II.
		\end{equation*}
A partial decisive answer is given by the so called optimal fourth moment theorem \cite{n-p-optimal}, recalling as Theorem \ref{thm:4MT} item $(b)$ when the  function $f=H_q$ is a Hermite polynomial of degree $q\ge2$.
	\end{itemize}
\end{rem}

 Denote $F^I_n:= \frac{V^I_n}{\sqrt{\Var\left( V^I_n  \right) }}$ and  let $p^{I,(m)}_n$ and $p^{(m)}_N$ be the $m$th derivatives of densities of random variables $F_n$ and $N$ respectively, where, as before, $N \sim \mathcal{N}(0,1)$.

\begin{thm}[Density Convergence in the Breuer--Major Theorem]\label{thm:BM-Tempered-Densities}
Let    all the assumptions and notations of Theorem \ref{thm:Breuer-Major_tempered} hold, and function $f$ be given by
\[ f (x) = \sum_{j=d}^{q} a_j H_j (x),  \] where $2 \le d \le q$, and $(a_j : j=d,\ldots,q)$ are real numbers.   
Then 
\begin{description}
\item[$(a)$] For all $m\ge 0$  and every $p \in [1,\infty]$ (including $p=\infty$ corresponding to the uniform norm),
\begin{equation}\label{eq:Density-Convergence}
\Big \Vert p^{I,(m)}_n - p^{(m)}_N  \Big \Vert_{L^{p}(\R)} \longrightarrow 0
\end{equation}
  as $n$ tends to infinity.

\item[$(b)$] In particular, if $q=d$ (in other words the sequence $F_n$ belongs to the fixed Wiener chaos of order $d$), then for all $m \ge 0$  there exist  $n_0 \in \N$  and a constant $C$ (depending only on $m$ and $q$) such that for all $n \ge n_0$  we have

\begin{equation}\label{eq:Quantitative-Density-Convergence}
\Big \Vert p^{I,(m)}_n - p^{(m)}_N  \Big \Vert_{L^{\infty}(\R)}  \le C \, \sqrt{        \E   \left[F^4_n \right] -     3       }.
\end{equation}

\end{description}
Similar statements are also valid by replacing $V^I_n$ with $V^{I\!I}_n$.
\end{thm}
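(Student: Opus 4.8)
The plan is to place $V^{I}_n$ (and, verbatim, $V^{I\!I}_n$) inside Malliavin calculus and to reduce the claim to the known density--convergence criteria for sequences living in a \emph{finite} sum of Wiener chaoses. Since $f=\sum_{j=d}^{q}a_jH_j$ is a finite linear combination of Hermite polynomials, $V^{I}_n=\frac1{\sqrt n}\sum_{k=1}^{n}f(Y^{I}_k)=\sum_{j=d}^{q}a_jI_j(f_{j,n})$ for suitable symmetric kernels $f_{j,n}$ assembled from the $g^{I}_{\lambda,\alpha,k}$; hence $V^{I}_n\in\bigoplus_{j=d}^{q}\mathcal H_j$ is smooth in the Malliavin sense and, by hypercontractivity, all its Malliavin--Sobolev norms are controlled by $\|V^{I}_n\|_{L^2}$, which by Lemma \ref{Hermite} and Theorem \ref{thm:Breuer-Major_tempered} is bounded and bounded away from $0$ (here we take $\sigma^{2}_{I,H,\lambda,d}>0$, as in the standing hypotheses). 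Moreover Proposition \ref{prop:TFLN}(a) gives $\gamma^{I}\in\ell^{r}(\Z)$ for every $r\ge1$ --- in fact $|\gamma^{I}(j)|$ decays exponentially --- so the Breuer--Major condition holds with large room and $F^{I}_n:=V^{I}_n/\sqrt{\Var(V^{I}_n)}\To N$ in distribution by Theorem \ref{thm:Breuer-Major_tempered}. Proposition \ref{prop:TFLN}(b) and Lemma \ref{Hermite}(c) do the same for $V^{I\!I}_n$.

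\noindent\textbf{Part (a).}
I would verify the two hypotheses needed by the density--convergence machinery. The uniform Sobolev bound (all Malliavin--Sobolev norms of $F^{I}_n$ bounded uniformly in $n$) is immediate from the previous paragraph. The substantive hypothesis is the uniform non-degeneracy of the Malliavin derivative,
\[
\sup_{n\ge n_0}\ \E\big[\,\|DF^{I}_n\|_{\mathfrak H}^{-r}\,\big]<\infty\qquad\text{for every }r\ge1,
\]
and it is here that the Breuer--Major averaging structure is genuinely used: for a bare element of a fixed chaos, say $H_2$ of a standard Gaussian, these negative moments are already infinite and the density is unbounded. Granting the two hypotheses, the Malliavin integration-by-parts formulae represent $p^{I,(m)}_n(x)=\E\big[\mathbf 1_{\{F^{I}_n>x\}}\,\mathcal G^{(m)}_n\big]$ with $\mathcal G^{(m)}_n$ an iterated Skorokhod integral built from $DF^{I}_n,\dots,D^{m+1}F^{I}_n$ and $\|DF^{I}_n\|_{\mathfrak H}^{-1}$, whose $L^{p}(\Omega)$ norms are uniformly bounded; combined with $F^{I}_n\To N$ this yields $p^{I,(m)}_n\to p^{(m)}_N$ in every $L^{p}(\R)$, $1\le p\le\infty$, which is \eqref{eq:Density-Convergence}. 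This is exactly the route of the density--convergence theory for Breuer--Major sums (Hu--Lu--Nualart, Hu--Nualart--Tindel--Xu), whose standing assumptions are trivially satisfied here because $\gamma^{I}$ decays exponentially.

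\noindent\textbf{Part (b).}
When $q=d$ the sum collapses to $V^{I}_n=a_dI_d(f_{d,n})$, which lives in the single $d$th Wiener chaos, so $\E[(F^{I}_n)^{4}]-3=\kappa_4(F^{I}_n)$ is a nonnegative quantity --- a positive combination of the norms $\|f_{d,n}\widetilde\otimes_r f_{d,n}\|^{2}$, $1\le r\le d-1$ --- that tends to $0$ by the fourth moment theorem (Theorem \ref{thm:4MT}). On a fixed chaos the quantitative counterpart of Part (a) gives, once $n$ is large enough for the non-degeneracy to be in force,
\[
\big\|p^{I,(m)}_n-p^{(m)}_N\big\|_{L^{\infty}(\R)}\le C_{m,d}\,\sqrt{\E[(F^{I}_n)^{4}]-3},
\]
where the constant $C_{m,d}$ absorbs the uniformly bounded Sobolev and negative-moment constants produced in Part (a); this is \eqref{eq:Quantitative-Density-Convergence}. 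The statements for TFGNII follow by running the same argument with $V^{I\!I}_n$, using $\gamma^{I\!I}\in\ell^{1}(\Z)$ (Proposition \ref{prop:TFLN}(b)) and $\sigma^{2}_{I\!I,H,\lambda,d}>0$.

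\noindent\textbf{Main obstacle.}
The single non-routine ingredient --- and the step I expect to cost the most --- is the uniform negative-moment bound for $\|DF^{I}_n\|_{\mathfrak H}$. I would follow Hu--Nualart--Tindel--Xu: expand $\|DV^{I}_n\|^{2}_{\mathfrak H}$ in Wiener chaoses in terms of the kernels $g^{I}_{\lambda,\alpha,k}$ and the covariances $\gamma^{I}(k-l)$, use the exponential summability of $\gamma^{I}$ to show that this random variable converges in $L^{2}$ to a strictly positive constant (positivity follows from $\sigma^{2}_{I,H,\lambda,d}>0$), and then convert this $L^{2}$-concentration into a uniform small-ball estimate $\Pp\big(\|DF^{I}_n\|^{2}_{\mathfrak H}\le\varepsilon\big)\le C\varepsilon^{\theta}$ via the Carbery--Wright inequality --- all the variables in play sit in chaoses of order at most $2q$, so their $L^{p}$ norms are comparable. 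Promoting this to \emph{all} negative moments being bounded uniformly in $n$ is where the averaging is essential: morally $\|DF^{I}_n\|^{2}_{\mathfrak H}$ dominates a sum of order $n$ almost-independent nonnegative contributions and hence behaves like a chi-square variable with a diverging number of degrees of freedom. Arguments of this type are carried out in the cited works (there under polynomial mixing, hence a fortiori in the present exponentially mixing setting), and it is this technical heart of the proof that I would need to adapt with care.
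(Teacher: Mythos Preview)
Your overall reduction is right --- both you and the paper ultimately invoke the density--convergence results of Hu et al.\ (the paper's reference \cite{HU2}, restated as Theorem \ref{thm:BM-Density-Rate}) for Breuer--Major sums lying in a finite span of chaoses --- but you have made the verification step harder than it needs to be, and in doing so you have misidentified the actual hypothesis to check.

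The paper does not attempt to establish the uniform negative--moment bound $\sup_n \E[\|DF^I_n\|_{\HH}^{-r}]<\infty$ directly. Instead, it verifies the hypothesis under which \cite{HU2} proves that bound internally: that the spectral density $h^I$ of TFGN satisfies $\log h^I \in L^1([-\pi,\pi])$ (the purely nondeterministic condition). Since $\gamma^I\in\ell^1(\Z)$ by Proposition \ref{prop:TFLN}(a), $h^I$ is bounded, so the only issue is a zero of $h^I$; the paper records that $h^I(\omega)\asymp \omega^2/(\lambda^2+\omega^2)$ near the origin, so $\log h^I$ has only a logarithmic singularity there and is integrable. That single line is the whole proof: with $\log h^I\in L^1$ in hand, Theorem \ref{thm:BM-Density-Rate} gives both (a) and (b) immediately. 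The Remark following the proof in the paper explains why this spectral condition is the right packaging: it is what yields the Wold representation $X^I(k)=\sum_{j\ge0}a_j\varepsilon_{k-j}$, and it is from that representation that \cite{HU2} extracts the negative moments.

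Your direct attack via Carbery--Wright and $L^2$-concentration of $\|DV^I_n\|_{\HH}^2$ is not wrong in spirit, but as written it has a gap you yourself flag: Carbery--Wright applied to a polynomial of degree $2(q-1)$ gives only $\Pp(\|DF^I_n\|_{\HH}^2\le\varepsilon)\le C\varepsilon^{1/(2q-2)}$, which yields only a \emph{finite} range of negative moments, not all of them. Upgrading this to arbitrary $r$ is precisely where the spectral/Wold structure enters in the cited works, and your ``sum of $n$ almost-independent nonnegative contributions'' heuristic is not a proof. The clean fix --- and what the paper does --- is to verify $\log h^I\in L^1$ and cite the black box.
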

\begin{rem} With the particular case $p=1$, and $m=0$ in $(a)$, the above estimate implies that $d_{TV}(F_n,N) \to 0$, however it does not provide any rate of the convergence. Moreover, the assumption on the function $f$ here is  more stronger that the previous theorem. This is somehow clear due to requiring a more stronger convergence.
\end{rem}
\begin{proof}
Let $h^I$ denote the spectral density function of TFGNI. Note that  $h^I \in L^1([-\pi,\pi]) $   in virtue of   \cite[Proposition 7.3.3]{Nourdin-Peccati-Bible}. In fact, $h^I \in L^\infty ([-\pi,\pi])$, and hence $h^I \in  L^r ([-\pi,\pi])$ for every $r \ge 1$,  because $\gamma^I \in l^1 (\Z)$.  Moreover,
\[  h^I (\omega )   \approx \frac{\omega^2}{(\lambda^2 + \omega^2)}, \quad \text{ as } \quad \vert \omega \vert \to 0. \]
Hence, $\log(h^I) \in L^1([-\pi,\pi])$. Now part $(a)$ follows directly from  \cite[Theorem 1.5]{HU2} and \cite[Corollary 1.6]{HU2}, recalling as Theorem \ref{thm:BM-Density-Rate}. Proof for the case TFGNII is similar.
\end{proof}
\begin{rem}
 Condition $\log(h^I) \in L^1([  \pi,\pi])$ is referred to as purely nondeterministic property in the literature, and in particular implies that the following useful representation takes places $$ X^I(k) =  \sum_{j\ge 0} a_j \varepsilon_{k-j}$$ where $(\varepsilon_k)$ stands for  a standard white noise. Roughly speaking, Malliavin calculus bridges  density and its derivatives to existence of the negative moments on the norm of the Malliavin derivative. The latter condition exist only in some special cases, and the assumption $\log(h^I) \in L^1([  \pi,\pi])$ requires for the justification of the condition.
  \end{rem}

{Fix $q \ge 2$}.  Let $V_n=\frac{1}{\sqrt{n} }\sum_{k=1}^{n} H_q(Y^J(k)), J=I,I\!I$. Consider the sequence $(F^J_n : n \ge 1)$ defined via
\begin{equation}\label{eq:hermite-variation}
F^J_n = \frac{V_n}{\sqrt{\Var\left( V_n \right)}}=  \frac{1}{\sqrt{n \Var\left(  V_n\right)}} \sum_{k=1}^{n} H_q(Y^J(k)).
\end{equation}

\begin{thm}[Exact asymptotics in the Breuer--Major CLT]\label{thm:Exact-BM-General-Chaos}
Let $N \sim  \mathcal{N}(0,1)$. Consider the sequence $(F^J_n : n\ge 1)$ given by relation \eqref{eq:hermite-variation}. Then, for every $z \in \R$, as $n\rightarrow\infty$,  {the following exact asymptotic statement takes place}

\begin{equation}\label{eq:Calim}
  \sqrt{n} \Big(   \mathbb{P}\left( F^J_n \le z \right) - \mathbb{P} \left(  N \le z \right)   \Big)  \longrightarrow     \frac{ \rho}{3\sqrt{2\pi} }  (z^2 -1 ) e^{- \frac{z^2}{2}},
  \end{equation}
with
\begin{align}
\rho &= C^{-3q}_1 \, qq! (q/2)! { q-1 \choose q/2 -1 }^2 \frac{1}{\sigma^3} \sum_{k,l \in \Z} \gamma^J(k)^{q/2} \gamma^J(l)^{q/2} \gamma^J(l-k)^{q/2}\nonumber
\end{align}
and
\begin{align}
\sigma^2 &= q!  C^{-2q}_1  \sum_{k \in \Z} \left(\gamma^J(k)\right)^q >0\nonumber
\end{align}
  {provided that either $J=I,I\!I$, $q$ even, or $J=I$, $H \in (0,1/2]$, or $J=I\!I$, $H \ge 1/2$.}
\end{thm}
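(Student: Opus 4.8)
The strategy is to recognize $(F^J_n)_{n\ge 1}$ as a sequence living in the fixed Wiener chaos of order $q$ and to feed it into the exact (second--order) Berry--Esseen asymptotics of Nourdin and Peccati \cite{n-p-AOP-Exact-Asymptotic}. That result asserts that, for a sequence $G_n=I_q(g_n)$ of the $q$th chaos with $\E[G_n^2]\to1$, provided the contraction norms $\Vert g_n\otimes_r g_n\Vert$ ($r=1,\dots,q-1$) are $O(n^{-1/2})$ and the renormalized third cumulant $\sqrt n\,\kappa_3(G_n)$ converges, one has that $\sqrt n\big(\Pp(G_n\le z)-\Pp(N\le z)\big)$ converges to a universal constant times $\big(\lim_n\sqrt n\,\kappa_3(G_n)\big)\,(z^2-1)e^{-z^2/2}$, the limiting constant being also expressible through the contraction $\langle g_n,g_n\otimes_{q/2}g_n\rangle$. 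Thus the proof splits into three tasks: (1) check these hypotheses for $G_n=F^J_n$; (2) identify $\sigma^2=\lim_n\Var(V_n)$ and prove $\sigma^2>0$; (3) evaluate the limiting second--order constant $\rho$ in terms of $\gamma^J$.

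Tasks (1) and (2) are essentially free. Writing $Y^J(k)=I_1(e_k)$ for unit vectors $e_k$ of the underlying Gaussian Hilbert space, so that $\langle e_k,e_l\rangle=\bar\gamma^J(k-l):=\gamma^J(k-l)/(C^J_1)^2$ and $H_q(Y^J(k))=I_q(e_k^{\otimes q})$, we get $V_n=I_q(f_n)$ with $f_n=n^{-1/2}\sum_{k=1}^n e_k^{\otimes q}$ and $F^J_n=I_q\big(f_n/\sqrt{\Var(V_n)}\big)$. By Proposition \ref{prop:TFLN} the covariance $\gamma^J$ decays exponentially, hence $\gamma^J,\bar\gamma^J\in\ell^p(\Z)$ for \emph{every} $p\ge1$; in particular $\bar\gamma^J\in\ell^1(\Z)$ and $\vert\bar\gamma^J\vert\le1$, which makes the standard power--counting bound $\Vert f_n\otimes_r f_n\Vert=O(n^{-1/2})$ ($r=1,\dots,q-1$) a routine estimate and, in particular, gives $F^J_n\limd N$ (which also follows from Theorem \ref{thm:Breuer-Major_tempered} applied to $f=H_q$, together with $\Var(V_n)\to\sigma^2>0$). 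From \eqref{orieq}, $\Var(V_n)=q!\sum_{\vert k\vert<n}(1-\vert k\vert/n)\,\bar\gamma^J(k)^q\to q!\sum_{k\in\Z}\bar\gamma^J(k)^q=q!(C^J_1)^{-2q}\sum_{k\in\Z}\gamma^J(k)^q=:\sigma^2$ by dominated convergence ($\bar\gamma^J\in\ell^q$), and $\sigma^2>0$ under the stated restrictions on $(J,H,q)$ is exactly Lemma \ref{Hermite}(a)--(c).

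Task (3) is the heart of the matter. Since $\E[H_q(Y^J(k))]=0$, trilinearity of the third cumulant gives $\kappa_3(V_n)=n^{-3/2}\sum_{i,j,k=1}^n\E\big[H_q(Y^J(i))H_q(Y^J(j))H_q(Y^J(k))\big]$. If $q$ is odd this triple expectation vanishes for all $i,j,k$ (there is no admissible complete contraction in the diagram/Wick formula; equivalently, every element of an odd chaos has zero third cumulant), so $F^J_n$ has zero skewness for all $n$ and \cite{n-p-AOP-Exact-Asymptotic} forces the left--hand side of \eqref{eq:Calim} to converge to $0$ --- in accordance with reading $\rho=0$, the displayed formula for $\rho$ being understood for $q$ even. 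If $q$ is even, the product formula $I_q(e_i^{\otimes q})I_q(e_j^{\otimes q})=\sum_{r=0}^q r!\binom{q}{r}^2 I_{2q-2r}(e_i^{\otimes q}\tilde\otimes_r e_j^{\otimes q})$ isolates the chaos-$q$ term $r=q/2$ and yields $\E[H_q(Y^J(i))H_q(Y^J(j))H_q(Y^J(k))]=q!\,(q/2)!\binom{q}{q/2}^2\,\bar\gamma^J(i-j)^{q/2}\bar\gamma^J(j-k)^{q/2}\bar\gamma^J(i-k)^{q/2}$. Substituting this, changing the summation index to the lags $a=i-j$, $b=k-j$, using that $\bar\gamma^J$ is even and lies in $\ell^1$, and passing to the Cesàro limit (the weights $(1-\vert a\vert/n)(1-\vert b\vert/n)$ tend to $1$, dominated convergence applies) yields $\lim_n\sqrt n\,\kappa_3(V_n)=q!\,(q/2)!\binom{q}{q/2}^2\sum_{a,b\in\Z}\bar\gamma^J(a)^{q/2}\bar\gamma^J(b)^{q/2}\bar\gamma^J(a-b)^{q/2}$. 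Dividing by $\Var(V_n)^{3/2}\to\sigma^3$, rewriting $\bar\gamma^J$ in terms of $\gamma^J$ (which reinstates the factor $(C^J_1)^{-3q}$), and using $\binom{q}{q/2}=2\binom{q-1}{q/2-1}$ to bring the combinatorial constant into the form featuring $\binom{q-1}{q/2-1}^2$, one recovers precisely the limit displayed in \eqref{eq:Calim} with $\rho$ as stated.

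The step I expect to demand the most care is this last bookkeeping: matching the combinatorial constant --- and its sign --- in \eqref{eq:Calim} against the exact normalization in which \cite{n-p-AOP-Exact-Asymptotic} records its Edgeworth correction, so that the explicitly computed value of $\lim_n\sqrt n\,\kappa_3(F^J_n)$ (equivalently, of the limiting contraction) is translated faithfully into the displayed $\rho$. Everything else --- the central limit theorem, the strict positivity of $\sigma^2$, the $\ell^p$--membership of $\gamma^J$ for all $p\ge1$, and the $O(n^{-1/2})$ decay of the relevant contraction norms --- is supplied directly by Proposition \ref{prop:TFLN}, Lemma \ref{Hermite} and Theorem \ref{thm:Breuer-Major_tempered}, so that no additional hard estimates are required.
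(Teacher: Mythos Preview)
Your route is genuinely different from the paper's and conceptually cleaner, but it contains a real gap at the point where you invoke \cite{n-p-AOP-Exact-Asymptotic}. The version of that result recorded in the paper (Theorem~\ref{thm:BM-Exact-Asymptotic}) does \emph{not} say that $O(n^{-1/2})$ contraction norms together with convergence of $\sqrt n\,\kappa_3(F_n)$ suffice. What it actually requires is condition~(c): the \emph{joint} convergence of the pair
\[
\Big(F_n,\;\tfrac{1-\frac1q\|DF_n\|^2_{\HH}}{\varphi(n)}\Big)\ \limd\ (N_1,N_2)
\]
to a two--dimensional Gaussian vector, after which $\rho=\E[N_1N_2]$ enters the Edgeworth term. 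The bound $\|f_n\otimes_r f_n\|_{\HH^{\otimes(2q-2r)}}=O(n^{-1/2})$ only tells you that $\varphi(n)=O(n^{-1/2})$; it does not, by itself, give the Gaussian fluctuations of $\sqrt n\big(\frac1q\|DF_n\|^2-1\big)$. The paper spends most of its proof filling exactly this gap: it expands $\frac1q\|DF_n\|^2-1=\frac{1}{\sqrt n}\sum_{r=1}^{q-1}G_{n,r}$ into chaos components of orders $2,4,\dots,2q-2$, and then shows---via a second application of the fourth moment theorem and a lengthy estimate of the \emph{contractions of the contractions} (the eight--index sums over $k_1,l_1,\dots,k_4,l_4$)---that each $G_{n,r}$ is asymptotically Gaussian, invoking Peccati--Tudor (Theorem~\ref{thm:4MT-MultiDim}) to assemble the joint CLT. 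Only after this is Theorem~\ref{thm:BM-Exact-Asymptotic} applied. Your proposal skips this entire verification.

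That said, once the joint CLT is in place, your third--cumulant computation and the paper's computation of $\rho=\lim_n\E\big[F_n\cdot\sqrt n(\frac1q\|DF_n\|^2-1)\big]$ are two sides of the same coin, linked by the Malliavin integration--by--parts identity $\kappa_3(F_n)=\E[F_n^3]=2\,\E\big[F_n\cdot\frac1q\|DF_n\|^2\big]$ valid in the $q$th chaos. So your evaluation of $\rho$ via $\E[H_q(Y_i)H_q(Y_j)H_q(Y_k)]=q!(q/2)!\binom{q}{q/2}^2\bar\gamma(i{-}j)^{q/2}\bar\gamma(j{-}k)^{q/2}\bar\gamma(i{-}k)^{q/2}$ is a legitimate and more transparent alternative to the paper's route through $G_{n,q/2}$; what your argument is missing is not the computation of the limiting constant, but the asymptotic normality of the second coordinate that licenses the use of Nourdin--Peccati's exact asymptotics in the first place. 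To complete your proof you must either (i) cite a specific corollary of \cite{n-p-AOP-Exact-Asymptotic} for fixed chaos that bypasses condition~(c), or (ii) supply the joint CLT---which, under $\gamma^J\in\ell^1(\Z)$, does follow from another round of contraction estimates, but is precisely the ``hard estimate'' you claim is not required.
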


\begin{proof}
We only consider the case $J=I$. The other case is similar. We are going to apply \cite[Theorem 3.1]{n-p-AOP-Exact-Asymptotic}. In order to settle in that framework,  we can assume, without loss of generality, that $X^I (k)= X (\varepsilon_k)$, where $\{ X(h) : h \in \HH \}$ is an adequate isonormal Gaussian process over the separable Hilbert space $\HH$ (see \cite[Proposition 7.2.3]{Nourdin-Peccati-Bible}) with $\langle  \varepsilon_k, \varepsilon_l  \rangle_{\HH} = C^{-2}_1\gamma^I (k-l)$ for every $k,l \in \Z$. So, we can write
\[  F_n = I_q (f_n), \,  \text{ and } \,  f_n :=  \frac{1}{\sqrt{n \Var \left( V^I_n \right)}} \sum_{k=1}^{n} \varepsilon_k^{\otimes q},  \quad n \ge 1.  \]
 The notation $ \varepsilon_k^{\otimes q}$ stands for the tensor product.
First, note that as in Lemma \ref{Hermite}, dominated convergence theorem yields that
\begin{equation*}\begin{gathered}
\sigma^2_n : = \Var \left( V^I_n\right) = \frac{q!}{n} \,   C^{-2q}_1  \sum_{k,l=1}^{n} \left(\gamma^I (k-l)\right)^q  = q! \, C^{-2q}_1  \sum_{\vert k \vert < n}  \left(1 - \frac{\vert k \vert }{n}\right) \left(\gamma^I (k)\right)^q \\\longrightarrow \sigma^2:=  q!  \,  C^{-2q}_1  \sum_{k \in \Z} \left(\gamma^I (k)\right)^q < + \infty.
\end{gathered}\end{equation*}
Therefore, according to Breuer--Major theorem we can conclude that $F_n \stackrel{d}{\longrightarrow} \mathcal{ N }(0,\sigma^2)$ as $n$ tends to infinity, and that $\sigma^2 >0$ according to Lemma \ref{Hermite}. Furthermore,   $$DF_n = \frac{q}{\sqrt{n \sigma^2_n}}\sum_{k=1}^{n} \varepsilon_k I_{q-1} \left( \varepsilon^{\otimes (q-1)}_k \right).$$ Hence, using product formula for multiple integrals \eqref{multiplication}, we can write

\begin{multline*}
 \Vert DF_n \Vert^2_{\HH}= \frac{q^2}{n \sigma^2_n} \sum_{k,l=1}^{n}  C^{-2}_1 \gamma^I (k-l) I_{q-1} \left( \varepsilon^{\otimes (q-1)}_k \right) I_{q-1} \left( \varepsilon^{\otimes (q-1)}_l \right)\\
 = \frac{q^2 }{n \sigma^2_n} \sum_{k,l=1}^{n} C^{-2}_1 \gamma^I (k-l)  \sum_{r=0}^{q-1} r! { q-1 \choose r}^2 I_{2q-2r-2} \left(  \varepsilon^{\otimes (q-1)}_k  \otimes_{r}  \varepsilon^{\otimes (q-1)}_l \right)\\
 =  \frac{q^2 }{n \sigma^2_n} \sum_{k,l=1}^{n}  \sum_{r=0}^{q-1} r! { q-1 \choose r}^2  C^{-2(r+1)}_1 \gamma^I (k-l)^{r+1}    I_{2q-2r-2} \left(  \varepsilon^{\otimes (q-r-1)}_k  \otimes  \varepsilon^{\otimes (q-r-1)}_l \right)\\
 =\frac{q^2}{n \sigma^2_n} \sum_{r=1}^{q} (r-1)! { q-1 \choose r-1}^2 \sum_{k,l=1}^{n} I_{2q-2r} \left( \varepsilon^{\otimes (q-r)}_k  \otimes \varepsilon^{\otimes (q-r)}_l\right)  C^{-2r}_1 \gamma^I (k-l)^{r}.
 \end{multline*}
 Therefore,

 \begin{multline*}
G_n := \frac{1}{q} \Vert DF_n \Vert^2_{\HH} -1= \frac{q}{n \sigma^2_n} \sum_{r=1}^{q} (r-1)! { q-1 \choose r-1}^2 \sum_{k,l=1}^{n} I_{2q-2r} \left( \varepsilon^{\otimes (q-r)}_k  \otimes \varepsilon^{\otimes (q-r)}_l\right) C^{-2r}_1 \gamma^I (k-l)^{r}-1\\
=\frac{q}{n \sigma^2_n} \sum_{r=1}^{q-1} (r-1)! { q-1 \choose r-1}^2 \sum_{k,l=1}^{n} I_{2q-2r} \left( \varepsilon^{\otimes (q-r)}_k  \otimes \varepsilon^{\otimes (q-r)}_l\right)  C^{-2r}_1 \gamma^I (k-l)^{r}.
 \end{multline*}

 One has to note that for each $n\ge 1$, the random variable $G_n$ belongs to a finite sum of Wiener chaoses up to order $2q-2$. Our next aim is to show that $\sqrt{n}G_n \to \mathcal{N}(0,\widehat{\sigma}^2)$ as $n\rightarrow\infty$   for some variance $\widehat{\sigma}^2$ whose value will be determined later on. To do this, we apply the  fourth moment Theorem \ref{thm:4MT}. For each $r \in \{ 1,\ldots,q-1  \}$, set $$\varrho=\varrho(q,r,H,\lambda):= q (r-1)!{q-1 \choose r-1}^2$$ and  define
 \begin{equation}\label{eq:DF_n-r}
 G_{n,r} : =   \frac{\varrho}{\sigma^2_n \sqrt{n}} \sum_{k,l=1}^{n} I_{2q-2r} \left( \varepsilon^{\otimes (q-r)}_k  \otimes \varepsilon^{\otimes (q-r)}_l\right)  C^{-2r}_1  \gamma^I (k-l)^{r}.
 \end{equation}
 First, using Proposition \ref{prop:TFLN} part $(a)$,  we obtain  that

  \begin{multline}\label{eq:r-Variance}
  \sigma^2_{n,r} : = \Var (G_{n,r}) =  (2q-2r)! \frac{\varrho^2}{n \, \sigma^4_n}   \sum_{k_1,l_1,k_2,l_2=1}^{n}  C^{-2r}_1  \gamma^I (k_1 -l_1)^r  C^{-2r}_1  \gamma^I (k_2 - l_2)^r \\ \hskip6cm   \times  C^{-2(q-r)}_1 \gamma^I(k_1 - k_2)^{q-r} C^{-2(q-r)}_1 \gamma^{I}(l_1 -l_2 )^{q-r}\\
  \to \frac{(2q-2r)!\varrho^2}{\sigma^4}  C^{-4q}_1 \sum_{ k_1,k_2,k_3\in \Z}  \gamma^I (k_1)^r \gamma^I(k_2)^{q-r}\gamma^I (k_3)^{r} \gamma^I (k_2 + k_3 - k_1)^{q-r}=: \sigma^2_r < +\infty
  \end{multline}
     as $n\rightarrow\infty$.
  Next, we will show that for each $r \in \{ 1,\ldots,q-1  \}$,  we have that
  \begin{equation}\label{eq:G^r-CLT}
  \widetilde{G}_{n,r} : = \frac{G_{n,r}}{\sqrt{\sigma^2_{n,r}}} \limd   \mathcal{N}(0,1)
  \end{equation}
   as $n\rightarrow\infty$.
To start with, note that
  \begin{equation*}
  D\widetilde{G}_{n,r} = \frac{(2q-2r)\varrho}{\sigma_{n,r} \sigma^2_n} \times  \frac{1}{\sqrt{n}}  \sum_{k,l=1}^{n} \varepsilon_k I_{2q-2r-1} \left(  \varepsilon^{\otimes (q-r-1)}_k  \otimes \varepsilon^{\otimes (q-r)}_l\right)  C^{-2r}_1 \gamma^I (k-l)^{r},
  \end{equation*}
  Therefore,
  \begin{multline*}
  \Vert D\widetilde{G}_{n,r} \Vert^2_{\HH} = \left(  \frac{(2q-2r) \varrho}{\sigma_{n,r} \sigma^2_n} \right)^2\\
  \times \Bigg[  \frac{1}{n} \sum_{k_1,l_1,k_2,l_2=1}^{n}  C^{-4r}_1 \gamma^I (k_1 - l_1)^r \gamma^I (k_2 - l_2)^r  C^{-2}_1  \gamma^I (k_1 - k_2) \\
  \times I_{2q-2r-1} \left(  \varepsilon^{\otimes (q-r-1)}_{k_1} \otimes   \varepsilon^{\otimes (q-r)}_{l_1}\right)  I_{2q-2r-1} \left(  \varepsilon^{\otimes (q-r-1)}_{k_2}
  \otimes \varepsilon^{\otimes (q-r)}_{l_2}\right)   \Bigg]  \\
  = \left(  \frac{(2q-2r)\varrho}{\sigma_{n,r} \sigma^2_n} \right)^2\\
  \times \Bigg[  \frac{1}{n} \sum_{k_1,l_1,k_2,l_2=1}^{n}  C^{-4r}_1 \gamma^I (k_1 - l_1)^r \gamma^I (k_2 - l_2)^r  C^{-2}_1   \gamma^I (k_1 - k_2) \\
  \sum_{s=0}^{2q-2r-1} s! {  2q-2r-1 \choose s}^2 I_{4q-4r-2-2s}
   \left(  \left(  \varepsilon^{\otimes (q-r-1)}_{k_1} \otimes  \varepsilon^{\otimes (q-r)}_{l_1}\right) \otimes_s   \left(  \varepsilon^{\otimes (q-r-1)}_{k_2}  \otimes \varepsilon^{\otimes (q-r)}_{l_2}\right)  \right)  \Bigg],  \\
  \end{multline*}
 and consequently,
 \begin{multline*}
 \frac{1}{(2q-2r)} \Vert D \widetilde{G}_{n,r} \Vert^2_{\HH} -1 = \frac{(2q-2r)\varrho^2}{\sigma^2_{n,r} \sigma^4_n} \times \Bigg[  \frac{1}{n} \sum_{k_1,l_1,k_2,l_2=1}^{n}  C^{-4r}_1 \gamma^I (k_1 - l_1)^r \gamma^I (k_2 - l_2)^r  C^{-2}_1  \gamma^I (k_1 - k_2) \\
 \sum_{s=0}^{2q-2r-2} s! {  2q-2r-1 \choose s }^2 I_{4q-4r-2-2s}
 \left(  \left(  \varepsilon^{\otimes (q-r-1)}_{k_1} \otimes  \varepsilon^{\otimes (q-r)}_{l_1}\right) \otimes_s   \left(  \varepsilon^{\otimes (q-r-1)}_{k_2}  \otimes \varepsilon^{\otimes (q-r)}_{l_2}\right)  \right)  \Bigg]  \\
 = \frac{(2q-2r)\varrho^2}{\sigma^2_{n,r} \sigma^4_n} \times \sum_{s=0}^{2q-2r-2} s! {  2q-2r-1 \choose s}^2\\
 \times \Bigg[   \frac{1}{n} \sum_{k_1,l_1,k_2,l_2=1}^{n}   C^{-4r}_1 \gamma^I (k_1 - l_1)^r \gamma^I (k_2 - l_2)^r C^{-2}_1  \gamma^I (k_1 - k_2) \\ \times I_{4q-4r-2-2s}
 \left(  \left(  \varepsilon^{\otimes (q-r-1)}_{k_1} \otimes  \varepsilon^{\otimes (q-r)}_{l_1}\right) \otimes_s   \left(  \varepsilon^{\otimes (q-r-1)}_{k_2}  \otimes \varepsilon^{\otimes (q-r)}_{l_2}\right)  \right) \Bigg]\\
 = \frac{(2q-2r)\varrho^2}{\sigma^2_{n,r} \sigma^4_n}\times \sum_{s=0}^{q-r} s! {  2q-2r-1 \choose s}^2\\
 \times \Bigg[   \frac{1}{n} \sum_{k_1,l_1,k_2,l_2=1}^{n}  C^{-4r}_1   \gamma^I (k_1 - l_1)^r \gamma^I (k_2 - l_2)^r C^{-2}_1 \gamma^I (k_1 - k_2) \\
 \times I_{4q-4r-2-2s} \left(  \left(  \varepsilon^{\otimes (q-r-1)}_{k_1} \otimes  \varepsilon^{\otimes (q-r-1)}_{k_2} \right) \otimes \left(  \varepsilon^{\otimes (q-r-s)}_{l_1} \otimes  \varepsilon^{\otimes (q-r-s)}_{l_2}  \right)  \right)  C^{-2s}_1 \gamma^I (l_1 -l_2)^{s} \Bigg]\\
 + \frac{(2q-2r)\varrho^2}{\sigma^2_{n,r} \sigma^4_n} \times \sum_{s=q-r+1}^{2q-2r-2} s! {  2q-2r-1 \choose s}^2\\
 \times \Bigg[   \frac{1}{n} \sum_{k_1,l_1,k_2,l_2=1}^{n}  C^{-4r}_1  \gamma^I (k_1 - l_1)^r \gamma^I (k_2 - l_2)^r C^{-2}_1 \gamma^I (k_1 - k_2) \\
 \times I_{4q-4r-2-2s} \left(   \varepsilon^{\otimes (2q-2r-1-s)}_{k_1} \otimes  \varepsilon^{\otimes (2q-2r-1-s)}_{k_2} \right) C^{-2(s+1)}_1   \gamma^I (l_1 -l_2)^{q-r} \gamma^I(k_1 -k_2)^{s+1-q+r} \Bigg]\\
 = \frac{(2q-2r)\varrho^2}{\sigma^2_{n,r} \sigma^4_n}\times
 \sum_{s=1}^{q-r+1} (s-1)! {  2q-2r-1 \choose s-1}^2\\
 \times \Bigg[   \frac{1}{n} \sum_{k_1,l_1,k_2,l_2=1}^{n} C^{-4r}_1 \gamma^I (k_1 - l_1)^r \gamma^I (k_2 - l_2)^r C^{-2}_1 \gamma^I (k_1 - k_2) \\
 \times I_{4q-4r-2s} \left(  \left(  \varepsilon^{\otimes (q-r-1)}_{k_1} \otimes  \varepsilon^{\otimes (q-r-1)}_{k_2} \right) \otimes \left(  \varepsilon^{\otimes (q-r-s+1)}_{l_1} \otimes  \varepsilon^{\otimes (q-r-s+1)}_{l_2}  \right)  \right) C^{-2(s-1)}_1   \gamma^I (l_1 -l_2)^{s-1} \Bigg]\\
  + \frac{(2q-2r)\varrho^2}{\sigma^2_{n,r} \sigma^4_n} \times \sum_{s=q-r+2}^{2q-2r-1} (s-1)! {  2q-2r-1 \choose s-1}^2\\
 \times \Bigg[   \frac{1}{n} \sum_{k_1,l_1,k_2,l_2=1}^{n}  C^{-4r}_1  \gamma^I (k_1 - l_1)^r \gamma^I (k_2 - l_2)^r  C^{-2}_1 \gamma^I (k_1 - k_2) \\
 \times I_{4q-4r-2s} \left(   \varepsilon^{\otimes (2q-2r-s)}_{k_1} \otimes  \varepsilon^{\otimes (2q-2r-s)}_{k_2} \right) C^{-2s}_1   \gamma^I (l_1 -l_2)^{q-r} \gamma^I(k_1 -k_2)^{s-q+r} \Bigg]\\
 \end{multline*}
 Now, for every $1 \le s \le q-r+1$, we have

 \begin{multline*}
 \E \Bigg \vert        \frac{1}{n} \sum_{k_1,l_1,k_2,l_2=1}^{n}  C^{-4r}_1  \gamma^I (k_1 - l_1)^r \gamma^I (k_2 - l_2)^r C^{-2}_1 \gamma^I (k_1 - k_2) \\
 \times I_{4q-4r-2s} \left(  \left(  \varepsilon^{\otimes (q-r-1)}_{k_1} \otimes  \varepsilon^{\otimes (q-r-1)}_{k_2} \right) \otimes \left(  \varepsilon^{\otimes (q-r-s+1)}_{l_1} \otimes  \varepsilon^{\otimes (q-r-s+1)}_{l_2}  \right)  \right) C^{-2(s-1)}_1  \gamma^I (l_1 -l_2)^{s-1}             \Bigg \vert^2\\
 =\frac{1}{n^2} \sum_{k_1,l_1,k_2,l_2,k_3,l_3,k_4,l_4=1}^{n} \Bigg[ C^{-8r-4s}_1  \gamma^I(k_1 -l_1)^{r} \gamma^I(k_2 -l_2)^r \gamma^I(k_1 - k_2) \gamma^I(l_1 - l_2)^{s-1}\\
 \times \gamma^I (k_3 - l_3)^r \gamma^I(k_4 - l_4)^r \gamma^I(k_3 - k_4) \gamma^I(l_3 -l_4)^{s-1}\\
 \times  C^{-2(4q-4r-2s)}_1 \gamma^I(k_1 -k_3)^{q-r-1} \gamma^I (k_2 - k_4)^{q-r-1}\gamma^I(l_1 - l_3)^{q-r-s+1} \gamma^I(l_2 - l_4)^{q-r-s+1}      \Bigg] \\
 \sim_{n \to +\infty}  C^{-8q}_1  \frac{1}{n} \sum_{x_1,...,x_7 \in \Z} \Bigg[    \gamma^I(x_1)^r \gamma^I(x_2)^r \gamma^I(x_3)\gamma^I(x_2+x_3 - x_1)^{s-1}\\
 \times \gamma^I(x_4)^r \gamma^I(x_5)^r \gamma^I(x_6)\gamma^I(x_5+x_6 -x_4)^{s-1} \gamma^I(x_7)^{q-r-1}\\
 \times \gamma^I(x_6+x_7-x_3)^{q-r-1} \gamma^I(x_4 +x_7 -x_1)^{q-r-s+1} \gamma^I(x_5+x_6+x_7-x_2-x_3)^{q-r-s+1}\Bigg] \\
 \to 0, \, \text{ as }  \, n \to \infty.
 \end{multline*}
 Similarly for each $s \in \{ q-r+2, ..., 2q-2r-1  \}$, one can show that
 \begin{multline*}
 \E \Bigg  \vert         \frac{1}{n} \sum_{k_1,l_1,k_2,l_2=1}^{n} \gamma^I (k_1 - l_1)^r \gamma^I (k_2 - l_2)^r \gamma^I (k_1 - k_2) \\
 \times I_{4q-4r-2-2s} \left(   \varepsilon^{\otimes (2q-2r-1-s)}_{k_1} \otimes  \varepsilon^{\otimes (2q-2r-1-s)}_{k_2} \right)  \gamma^I (l_1 -l_2)^{q-r} \gamma^I(k_1 -k_2)^{s-q+r}     \Bigg \vert^2\\
 \to 0\, \text{ as }  \, n \to \infty.
 \end{multline*}
 Hence, using the orthogonality property of multiple stochastic integrals, one can infer that
 \[  \Var  \left(  \frac{1}{2q-2r} \Vert D \widetilde{G}_{n,r} \Vert^2_{\HH} -1   \right) \to 0, \] and the latter immediately implies \eqref{eq:G^r-CLT}. Furthermore, taking into account
 \[ \sqrt{n}   \left(  \frac{1}{q}  \Vert DF_n \Vert^2_{\HH} -1 \right) = \sum_{r=1}^{q-1}  G_{n,r} \]
 and, using Peccati--Tudor multidimensional fourth moment Theorem \ref{thm:4MT-MultiDim}, we can infer that, as $n$ tends to infinity, we have

 \[   \sqrt{n}   \left(  \frac{1}{q}  \Vert DF_n \Vert^2_{\HH} -1 \right) \limd \mathcal{N}(0,\widehat{\sigma}^2), \]
 with
 \begin{equation}\label{eq:Final-Variance}
 \widehat{\sigma}^2 = \sum_{r=1}^{q-1}  \sigma^2_r,
  \end{equation}
  where $\sigma^2_r$ is given by relation \eqref{eq:r-Variance}. Therefore,  \cite[Theorem 2.6]{n-p-AOP-Exact-Asymptotic} part (B) (recalling as Theorem \ref{thm:4MT},   part {\bf (c)}) yields that
  \[  \left( F_n,   \sqrt{n}   \left(  \frac{1}{q}  \Vert DF_n \Vert^2_{\HH} -1 \right) \right) \limd (N_1,N_2),   \]
  where $(N_1,N_2)$ is a centered two dimensional Gaussian vector with $\E[N^2_1] =1$, $\E[N^2_2] = \widehat{\sigma}^2$, and $\E[N_1 \times N_2]= \rho$, where, by orthogonality of multiple stochastic integrals,

  \begin{align*}
  \rho   & = \lim_{n\to \infty}  \E \left[ F_n \times   \sqrt{n}   \left(  \frac{1}{q}  \Vert DF_n \Vert^2_{\HH} -1 \right)  \right] =
   \lim_{n \to \infty}  \E \left[ F_n \times G_{n,q/2} \right]\\
  &=   q!  q (q/2)! { q-1 \choose q/2 -1 }^2  \lim_{n\to \infty} \frac{1}{\sigma^3_n} \times \frac{1}{n} \sum_{k,l,t=1}^{n} C^{-q}_1  \gamma^I(l-t)^{q/2}  C^{-q}_1  \gamma^I(l-k)^{q/2}  C^{-q}_1  \gamma^I(t-k)^{q/2}\\
  & \longrightarrow  C^{-3q}_1  q!  q  (q/2)! { q-1 \choose q/2 -1 }^2 \frac{1}{\sigma^3} \sum_{k,l\in \Z} \gamma^I(k)^{q/2} \gamma^I(l)^{q/2} \gamma^I(l-k)^{q/2}.
  \end{align*}
Finally we can deduce the claim \eqref{eq:Calim}.
\end{proof}

\begin{rem}[Exact asymptotics in the Breuer--Major CLT on the second Wiener chaos]\label{rem:Exact-BM-2Chaos}
Let $N \sim \mathcal{N}(0,1)$. Let $Y^J(k)= C^{-1}_1 \left(X^J(k+1)-X^J(k) \right)$ where $X^J(k)$ is either TFBM  (J=I) or TFBMII (J=$I\!I$) with the associated normalizing constant $C_1$ appearing in Lemma \ref{lem:usefulrelations}, covariance function $\gamma^J$ and spectral density function $h^J$. Let $ {q = 2}$. Consider the sequence $(F^J_n : n\ge 2)$ given by
relation \eqref{eq:hermite-variation} belonging to the second Wiener chaos. In this case, thanks to the fact that $\gamma^J \in l^{4/3} (\Z)$ (by Proposition \ref{prop:TFLN}), one can apply \cite[Proposition 3.8]{n-p-AOP-Exact-Asymptotic} (or \cite[Theorem 9.5.1]{Nourdin-Peccati-Bible}) to readily obtain, for every $z \in \R$, as $n$ tends to infinity, that
\[  \sqrt{n} \Big( \mathbb{P} \left( F^J_n \le z   \right) - \mathbb{P} \left(   N \le z\right) \Big)   \longrightarrow  \frac{\Vert h^J \Vert^3_{L^3([-\pi,\pi])}}{6 \pi \sqrt{\pi} \Vert \gamma^J \Vert^3_{l^2(\Z)}} (z^2 -1)e^{-\frac{z^2}{2}}. \]
\end{rem}

\begin{thm}[Almost Sure Convergence in the Breuer--Major  CLT]\label{thm:Almost-Sure-CLT}
	Let $N \sim \mathcal{N}(0,1)$, and $f = \sum_{q=1}^{\infty} a_q H_q (x) \in L^2(\R,\gamma)$ where as before $\gamma$ stands for the standard Gaussian measure on the real line.  For every $n\ge 1$ define $V^J_n  :=   \frac{1}{\sqrt{n}}  \sum_{k=1}^{n}  f(X^J_k)$. Consider sequence $F^J_n:= \frac{V^J_n}{\sqrt{\Var(V^J_n)}}$.  
	If in addition, function $f$ is of the class $C^2(\R)$ such that $\E[f''(N)^4] < \infty$, then the sequence $(F^J_n : n \ge 1)$ satisfies an ASCLT meaning that, almost surely, for every bounded continuous function $\varphi : \R \to \R$ it holds that
	
	\[  \frac{1}{\log n} \sum_{k=1}^{n} \frac{1}{k} \varphi (F^J_n)  \longrightarrow \E \left[  \varphi(N) \right], \quad \text{ as } \quad n \to \infty, \]
	provided that either $J=I,I\!I$ and there is at least one even $q$ so that $a_q \neq 0$, or $J=I$, $H \in (0,1/2]$, or $J=I\!I$, $H \ge 1/2$ and there is at least one coefficient $a_q \neq 0$ for $q>1$ in both latter cases.
	\end{thm}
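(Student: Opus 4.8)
The plan is to obtain the ASCLT by combining the central limit theorem already proved in Theorem~\ref{thm:Breuer-Major_tempered} with a general almost-sure CLT criterion for partial sums of functionals of a stationary Gaussian sequence; the criterion is of the type obtained via Malliavin calculus and the classical Ibragimov--Lifshits condition. First I would collect the ingredients at hand. The standardized tempered noises $Y^J$, $J=I,I\!I$, are centered stationary Gaussian sequences with unit variance whose covariances $(C^J_1)^{-2}\gamma^J$ decay \emph{exponentially} fast by Proposition~\ref{prop:TFLN}; in particular $\gamma^J\in\ell^1(\Z)$, so the summability hypothesis $\sum_{r\in\Z}|\gamma^J(r)|<\infty$ holds with no restriction whatsoever on $H$ or $\lambda$. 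Theorem~\ref{thm:Breuer-Major_tempered} gives $F^J_n=V^J_n/\sqrt{\Var(V^J_n)}\limd N\sim\mathcal N(0,1)$, and the strict positivity of the limiting variance $\sigma^2_{J,H,\lambda,d}$ in the three listed regimes is precisely Lemma~\ref{Hermite}(a)--(c) --- this is the origin of the three cases (``some even $q$'' / $H\le1/2$ / $H\ge1/2$) in the statement. Since $\Var(V^J_n)\to\sigma^2_{J,H,\lambda,d}>0$, replacing the random normalization $\sqrt{\Var(V^J_n)}$ by the deterministic $\sigma_{J,H,\lambda,d}\sqrt n$ is harmless for the ASCLT, so I may work with $\widetilde F^J_n:=\big(\sigma_{J,H,\lambda,d}\sqrt n\big)^{-1}\sum_{k=1}^n f(Y^J(k))$.

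Next I would invoke the criterion. Realizing $Y^J(k)=X(\varepsilon_k)$ over an isonormal Gaussian process on a separable Hilbert space $\HH$ with $\langle\varepsilon_j,\varepsilon_k\rangle_\HH=(C^J_1)^{-2}\gamma^J(j-k)$, the hypotheses $f\in C^2(\R)$ and $\E[f''(N)^4]<\infty$ guarantee (together with $f\in L^2(\R,\gamma)$) that $\widetilde F^J_n\in\mathbb D^{1,4}$ with $\sup_n\E\big[\|D\widetilde F^J_n\|_{\HH}^4\big]<\infty$ and a corresponding bound on $D^2$. The Bercu--Nourdin--Taqqu criterion then reduces the ASCLT to a covariance-decay estimate: it suffices that for a generating family of test functions $\varphi$ (e.g.\ $\varphi(x)=e^{itx}$, $t\in\R$)
\[
\sup_{n\ge2}\ \frac{1}{\log n}\sum_{1\le j\le k\le n}\frac{1}{jk}\,\Big|\mathrm{Cov}\big(\varphi(\widetilde F^J_j),\varphi(\widetilde F^J_k)\big)\Big|<\infty,
\]
after which the relevant Ibragimov--Lifshits series $\sum_{n}\frac{1}{n(\log n)^{1+\epsilon}}\big(\cdots\big)$ converges and yields $\frac{1}{\log n}\sum_{k\le n}\frac1k\delta_{\widetilde F^J_k}\Rightarrow\mathcal N(0,1)$ almost surely, which is the asserted ASCLT.

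Finally I would carry out the covariance estimate. A Malliavin integration-by-parts bound controls $\big|\mathrm{Cov}(\varphi(\widetilde F^J_j),\varphi(\widetilde F^J_k))\big|$ by a constant (depending only on $\|\varphi'\|_\infty$) times $\E\big[\,|\langle D\widetilde F^J_j,-DL^{-1}\widetilde F^J_k\rangle_\HH|\,\big]$ plus lower-order terms that vanish by the CLT. Using stationarity and the Hermite expansion of $f$, the relevant inner products are finite linear combinations of normalized double sums of the form $\frac{1}{\sqrt{jk}}\sum_{m\le j}\sum_{m'\le k}\gamma^J(m-m')^{q}$ and their multi-index analogues; the exponential decay of $\gamma^J$ from Proposition~\ref{prop:TFLN} --- so that $\sum_{|\nu|\le n}|\gamma^J(\nu)|=O(1)$ --- yields $|\mathrm{Cov}(\widetilde F^J_j,\widetilde F^J_k)|\le C\,(j/k)^{1/2}$ for $j\le k$ and the same $(j/k)^{1/2}$-decay for the higher-order contractions. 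Since $\sum_{1\le j\le k\le n}\frac{1}{jk}(j/k)^{1/2}=O(\log n)$, the displayed supremum is finite and the criterion applies. The only genuine obstacle is bookkeeping: matching the precise hypotheses of the cited ASCLT theorem --- translating ``$f\in C^2$, $\E[f''(N)^4]<\infty$'' into the needed $\mathbb D^{1,4}$-regularity with uniform derivative moments, and verifying the quantitative summability --- since everything of substance follows from the fact that tempering forces $\gamma^J$ to decay exponentially, so all summability conditions are met with room to spare and, unlike for classical fractional Gaussian noise, no phase transition in $H$ intervenes beyond the positivity of $\sigma^2_{J,H,\lambda,d}$ recorded in Lemma~\ref{Hermite}.
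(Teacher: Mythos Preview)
Your proposal is correct and follows essentially the same route as the paper: both reduce to the Bercu--Nourdin--Taqqu ASCLT criterion, with the key inputs being $\gamma^J\in\ell^1(\Z)$ from Proposition~\ref{prop:TFLN} and the strict positivity of the limiting variance from Lemma~\ref{Hermite} in the three listed regimes. The only difference is one of presentation: the paper's proof is a two-line direct citation of \cite[Theorem~3.4 and Remark~3.5]{b-n-t-almost-sure} (recalled as Theorem~\ref{thm:BM-AlmostSure} in the appendix), whereas you unpack the internal mechanics of that result --- the Ibragimov--Lifshits condition, the Malliavin integration-by-parts covariance bound, and the $(j/k)^{1/2}$ estimate --- which is unnecessary once the black-box theorem is available but does no harm.
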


\begin{proof}
Note that $\gamma^J \in l^1(\Z)$ for $J=I,I\!I$ by Proposition \ref{prop:TFLN}. Now, the claim can be straightforwardly achieved using Theorem 3.4, and Remark 3.5 in \cite{b-n-t-almost-sure}.
\end{proof}

\begin{rem}\label{rem:ASCLT:Hermite-Variation}
	The tempering parameter $\lambda$ removes completely the presence of any extra restriction on the Hurst parameter $H$ in the ASCLT for the $q$th Hermite variation of tempered fractional Gaussian noises, at least when $q$ is even. For the classical fractional Gaussian noise, we refer the reader to \cite[Theorem 6.2]{b-n-t-almost-sure}.
	\end{rem}

We next investigate the asymptotic behavior  of the third and fourth cumulants  of tempered fractional Gaussian processes. First, we define the cumulants of a random variable.
Let $F$ be a real-valued random variable with $\mathbb{E}|F|^n <\infty$ for $n\geq 1$.  Let $\phi_{F}(t)=\mathbb{E}[e^{itF}]$ be the characteristic function of $F$.
Then
$$
\kappa_{j}(F) = (-i)^j \frac{d^j}{dt^j}\log \phi_{F}(t)|_{t=0}
$$
is called the $j$th   cumulant of  F. 
For every $n \ge 1$, recall that
\begin{equation}\label{eq:Hermite-Variation}
F^J_n = \frac{V^J_n}{\sqrt{\Var\left( V^J_n \right)}}=  \frac{1}{\sqrt{n \Var\left(  V^J_n\right)}} \sum_{k=1}^{n} H_q(Y^J(k)), \quad J=I,I\!I.
\end{equation}

\begin{prop}[Optimal 3rd moment theorem]\label{optimal_fourth_moment}
	Let $q \ge 2$ be an integer. Consider sequence $(F^J_n \, : \, n \ge 1)$ given by relation \eqref{eq:Hermite-Variation} where $H_q$ denote the Hermite polynomial of degree $q$.	Then, as $n$ tends to infinity,
	\begin{itemize}
		\item[(a)]  For any even integer $q\geq 2$, it holds that $\kappa_{3}(F^J_n)\asymp n^{-\frac{1}{2}}$.
		
		\item[(b)] For any integer $q\geq 2$, it holds that $\kappa_{4}(F^J_n)\asymp n^{-1}$ provided that either $q$ is even, or $J=I$, $H \in (0,1/2]$, or $J=I\!I$, $H \ge 1/2$.
	\end{itemize}
		Therefore, if $q\ge 2$ is an  even  integer, then there exist two constants $C_1, C_2 >0$ (independent of $n$) so that for every $n \ge1$, the following optimal third moment estimate holds:
		
		\begin{equation}\label{eq:Optimal-Third-Moment}
		C_2  \, \big \vert \E [(F^J_n)^3]  \big\vert \le  d_{TV} (F^J_n, N) \le C_1  \,   \big \vert \E [(F^J_n)^3]  \big\vert .
		\end{equation}
	
\end{prop}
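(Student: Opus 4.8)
The plan is to push all the computations into the fixed Wiener chaos of order $q$ and then read off the total variation rate from the optimal fourth moment theorem. As in the proof of Theorem~\ref{thm:Exact-BM-General-Chaos} I would write $F^J_n = I_q(f_n)$ with $f_n = (n\sigma^2_n)^{-1/2}\sum_{k=1}^n \varepsilon_k^{\otimes q}$, where $\langle \varepsilon_k,\varepsilon_l\rangle_{\HH}=\rho^J(k-l):=(C^J_1)^{-2}\gamma^J(k-l)$ and $\sigma^2_n=\Var(V^J_n)\to\sigma^2=q!\,(C^J_1)^{-2q}\sum_{k}\gamma^J(k)^q$; this limit is strictly positive under each of the stated hypotheses by Lemma~\ref{Hermite}. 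By Proposition~\ref{prop:TFLN} the covariances $\gamma^J$ decay exponentially, so $\rho^J\in\ell^p(\Z)$ for every $p\ge1$ and all the multiple series below are absolutely convergent. I would also record at the outset that the spectral density $h^J$ of $Y^J$ is non-negative, bounded, even and not identically zero (its integral equals $\rho^J(0)=1$), and that the $m$-fold convolution $(h^J)^{\ast m}$ has Fourier coefficients $\rho^J(k)^m$ and inherits the same properties; this is the device that will produce the required non-degeneracy.

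\emph{Part (a).} For odd $q$ one has $\kappa_3(F^J_n)=\E[(F^J_n)^3]=0$ by parity, so take $q$ even. The product formula \eqref{multiplication} gives $\kappa_3(I_q(f_n))=q!\,(q/2)!\binom{q}{q/2}^2\langle f_n\otimes_{q/2}f_n,f_n\rangle$; expanding this inner product and changing variables produces a normalized triple sum of powers $\rho^J(\cdot)^{q/2}$ whose dominated-convergence limit gives $\kappa_3(F^J_n)\sim c_q\,\sigma^{-3}\Theta\,n^{-1/2}$ with $c_q>0$ and $\Theta=\sum_{a,b\in\Z}\rho^J(a)^{q/2}\rho^J(b)^{q/2}\rho^J(a+b)^{q/2}$. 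Interpreting $\rho^J(k)^{q/2}$ as the Fourier coefficients of $g:=(h^J)^{\ast(q/2)}$ and summing the geometric series, one identifies $\Theta=\frac{1}{2\pi}\int_{-\pi}^{\pi}g(\omega)^3\,d\omega$, which is strictly positive because $g\ge0$ and $g\not\equiv0$. Hence $|\kappa_3(F^J_n)|\asymp n^{-1/2}$.

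\emph{Part (b).} Here $\kappa_4(F^J_n)=\E[(F^J_n)^4]-3\ge0$, and I would invoke the classical fourth-cumulant identity for multiple integrals (a consequence of \eqref{multiplication}; see \cite{Nourdin-Peccati-Bible}), namely $\kappa_4(I_q(f_n))=\sum_{r=1}^{q-1}\big(b_{q,r}\|f_n\otimes_r f_n\|^2+c_{q,r}\|f_n\widetilde\otimes_r f_n\|^2\big)$ with all coefficients non-negative and $b_{q,r}>0$. Since $\|f_n\widetilde\otimes_r f_n\|\le\|f_n\otimes_r f_n\|$, this yields $b_{q,1}\|f_n\otimes_1 f_n\|^2\le\kappa_4(F^J_n)\le C_q\sum_{r=1}^{q-1}\|f_n\otimes_r f_n\|^2$. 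The same change of variables as in part (a) gives $\|f_n\otimes_r f_n\|^2\sim\sigma^{-4}\Xi_r\,n^{-1}$ with $\Xi_r=\sum_{a,b,c\in\Z}\rho^J(a)^r\rho^J(b)^r\rho^J(c)^{q-r}\rho^J(b+c-a)^{q-r}$, and the same Fourier computation identifies $\Xi_r=\frac{1}{2\pi}\int_{-\pi}^{\pi}\big((h^J)^{\ast r}(\omega)\big)^2\big((h^J)^{\ast(q-r)}(\omega)\big)^2\,d\omega>0$; thus $\|f_n\otimes_r f_n\|^2\asymp n^{-1}$ for each $r$, and therefore $\kappa_4(F^J_n)\asymp n^{-1}$. (For even $q$ the lower bound also follows at once from the inequality $\kappa_3(F^J_n)^2\le C_q\,\kappa_4(F^J_n)$, itself a Cauchy--Schwarz consequence of part (a) together with the $r=q/2$ term of the identity and the symmetry of $f_n$.) Finally, for even $q$, parts (a) and (b) give $\max\{|\E[(F^J_n)^3]|,|\E[(F^J_n)^4]-3|\}\asymp n^{-1/2}\asymp|\E[(F^J_n)^3]|$, and since $F^J_n$ converges in law to $\mathcal{N}(0,1)$ (Theorem~\ref{thm:Breuer-Major_tempered}), the optimal fourth moment theorem (Theorem~\ref{thm:4MT}, part (b))---which states that $d_{TV}(F^J_n,N)\asymp\max\{|\E[(F^J_n)^3]|,|\E[(F^J_n)^4]-3|\}$ for sequences belonging to a fixed Wiener chaos---yields \eqref{eq:Optimal-Third-Moment}.

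The step I expect to be the genuine obstacle is establishing the strict positivity---rather than mere non-negativity---of the limiting constants $\Theta$ and $\Xi_r$. The $\gamma^J$-sums are a priori only known to be real, and when $J=I$ and $H\le1/2$ the covariances oscillate in sign, so non-degeneracy cannot be extracted directly in the time domain; passing to the spectral side and invoking the non-negativity of $h^J$ to exclude cancellation is the crucial point. Everything else---the product-formula identities, the changes of variables, and the dominated-convergence passages to the limit---is routine and runs in parallel with the contraction computations already carried out in the proof of Theorem~\ref{thm:Exact-BM-General-Chaos}.
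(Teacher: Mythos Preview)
Your argument is correct and takes a genuinely different, more self-contained route than the paper. The paper's own proof is essentially a two-line citation: it observes that $\gamma^J\in\ell^{3q/4}(\Z)$ (for part~(a)) and $\gamma^J\in\ell^{2}(\Z)$ (for part~(b)) by Proposition~\ref{prop:TFLN}, and then invokes \cite[Propositions~6.3 and~6.4]{Bierme} as black boxes, combined with Lemma~\ref{Hermite} for the strict positivity of $\sigma^2$. You instead unfold what those propositions contain: you write out the cumulant/contraction identities on the $q$th chaos, perform the change of variables to identify the limiting constants $\Theta$ and $\Xi_r$, and---this is the key original step---pass to the spectral side to interpret $\Theta$ and $\Xi_r$ as integrals of non-negative (and not identically zero) functions built from convolution powers of the spectral density $h^J$. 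That Fourier device is exactly what settles the non-degeneracy that you correctly identify as the only substantive issue; it is robust to the sign oscillations of $\gamma^I$ for $H\le 1/2$ and does not rely on the sign information of Lemma~\ref{lem:Positive-Negative-Correlation}. What you gain is a transparent, self-contained proof that explains \emph{why} the limiting constants are strictly positive; what the paper gains by citing \cite{Bierme} is brevity. Both approaches ultimately rest on Lemma~\ref{Hermite} only to ensure $\sigma^2>0$ so that the normalization $F^J_n=V^J_n/\sigma_n$ is non-degenerate, and both conclude \eqref{eq:Optimal-Third-Moment} via the optimal fourth moment theorem (Theorem~\ref{thm:4MT}\,(b)) in the same way.
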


\begin{proof}
	(a) First, Proposition \ref{prop:TFLN} implies that  $\gamma^J \in \ell^{\frac{3q}{4}} (\Z)$. Now, from \cite[Proposition 6.3]{Bierme} we obtain that $0<\liminf \sqrt{n}\kappa_{3}(F_n)= \limsup \sqrt{n}\kappa_{3}(F_n)<\infty$ and this gives the desired result in part $(a)$.  For part (b) using Proposition \ref{prop:TFLN} infer that $\gamma^J \in l^{2}(\mathbb{Z})$, and hence \cite[Proposition 6.4]{Bierme} completes the proof in virtue of Lemma \ref{Hermite}. Finally, relation \eqref{eq:Optimal-Third-Moment} is a direct application of \cite[Theorem 2.1]{n-p-optimal} (recalling as Theorem \ref{thm:4MT} part {\bf (b)} in the appendix section).
\end{proof}

\begin{rem} \label{rem:optimalrate}
	\begin{itemize}
	\item[(i)]	See \cite[Remark 8.4.5]{Nourdin-Peccati-Bible}, item $1$ when $q$ is odd. In fact,  for a random variable $F$ belonging to a fixed Wiener chaos of odd order all the odd cumulants vanish. On the other hand, for a given general random variable $F$ with $\E[F]=0$, and $\E[F^2]=1$ we have $\kappa_3 (F)=\E[F^3]$ and $\kappa_4(F)=\E[F^4]-3$.  
	Hence, when $q$ is odd, then as explained $\E[(F^J_n)^3]=0$, and therefore the  optimal rate of convergence in the total variation metric is given by $\kappa_{4}(F^J_n)$ that is equals to $n^{-1}$ under the extra assumption that either $J=I$, $H \in (0,1/2]$ or $J=I\!I$, $H\ge 1/2$.   When $q$ is even, then there is a fight between the third and fourth cumulants in the optimal rate.   Proposition  \ref{optimal_fourth_moment} states  that in this situation the third cumulant $\kappa_3(F^J_n)$ is the winner, and the rate of convergence is $n^{-1/2}$.

\item[(ii)] The tempering parameter $\lambda$ manifests its role in the optimal fourth moment theorem. In fact, the optimal rates of convergence of the third and fourth cumulants of $F^J_n$ given by Proposition \ref{optimal_fourth_moment} are valid for any $H>0$ and $\lambda>0$ for even $q$. This is in contrast with the case of fractional Brownian motion where $\kappa_{3}(F_n)\asymp n^{-\frac{1}{2}}$ provided $H\in (0, 1-\frac{2}{3q})$ with an even integer $q\geq 2$ and $\kappa_{4}(F_n)\asymp n^{-1}$ provided $H\in (0, 1-\frac{3}{4q})$ with $q\in {2,3}$, see Propositions 6.6 and 6.7 in \cite{Bierme}. It is also worth to mention that for $q$ even the sequence $(F^J_n : n\ge 1)$ given by \eqref{eq:Hermite-Variation} exhibits the interesting scenario that $\kappa_{3}(F^J_n)  \approx \left( \kappa_{4}(F^J_n) \right)^{\frac{1}{2}}$, and hence the third cumulant $\kappa_{3}(F^J_n)$ asymptotically dominates the fourth cumulant as $n$ tends to infinity. A similar phenomenon appears in \cite{v-3Cumulant} as well, in which, convergence of third cumulants to zero implies the convergence of the fourth cumulants to zero.
    \end{itemize}
\end{rem}

\section{Acknowledgments}
Farzad Sabzikar would like to thank David Nualart for stimulating discussion on the proof of Theorem \ref{theo:variation} as well as suggesting to investigate the role of tempering in the optimal fourth moment theorem \cite{n-p-optimal}. Yu. Mishura was partially supported by the ToppForsk project nr. 274410 of the Research Council of Norway with title STORM: Stochastics for Time-Space Risk Models.

\section{Appendix  A }\label{sec:appendix}
This appendix contains some notations, definitions and well known results that we applied in the main text of this paper.

\subsection{Special functions $K_{\nu}$ and ${_2F_3}$}

In this subsection we present definitions of two special functions $K_{\nu}$ and ${_2F_3}$ that we have used in Section \ref{sec2}. We also provide the proof of Lemma \ref{lem:propo24}, see below, that we used in the proof of Proposition \ref{lem:usefulrelations 1}.
First, we start with the definition of the modified Bessel function of the second kind that appears in the variance and covariance function of TFBM, see part $(a)$ of Lemma \ref{lem:usefulrelations}. A modified Bessel function of the second kind $K_{\nu}(x)$ has the integral representation
\begin{equation*}
K_{\nu}(x)=\int_0^\infty e^{-x \cosh t} \cosh {\nu t}\ dt,
\end{equation*}
where $\nu>0, x>0$. The function $K_{\nu}(x)$ also has the series representation
\begin{equation*}
K_{\nu}(x) = \frac{1}{2}\pi \frac{ I_{-\nu}(x) - I_{\nu}(x) }{\sin(\pi \nu)},
\end{equation*}
where $I_{\nu}(x)=(\frac{1}{2}|x|)^{\nu} \sum_{n=0}^{\infty} \frac{ ( \frac{1}{2}x)^{2n}  }{n! \Gamma(n+1+\nu)}$ is called the Bessel function. We refer the reader to see (\cite[Section 8.43]{Gradshteyn}, pages 140--1414) for more information about the modified Bessel function of the second kind.

Next, we define the confluent Hypergeometric function ${_2F_3}$ that we used to obtain the variance and covariance of TFBMII, see part $(b)$ of Lemma \ref{lem:usefulrelations}. In general, a generalized hypergeometric function ${_pF_q}$ is defined by
\begin{equation*}
{_pF_q}(a_1,\cdots, a_p, b_1, \cdots, b_q, z)= \sum_{k=0}^{\infty} \frac{ (a_1)_{k} (a_2)_{k} \cdots (a_p)_{k}   }{ (b_1)_{k} (b_2)_{k} \cdots (b_q)_{k}  }\frac{z^k}{k!},
\end{equation*}
where $(c_i)_{k}=\frac{\Gamma(c_i + k)}{\Gamma(k)}$ is called Pochhammer Symbol. Therefore
\begin{equation*}
{_2F_3}( \{a_1, a_2\}, \{b_1, b_2, b_3\}, z)= {_2F_3}(a_1, a_2, b_1, b_2, b_3, z)= \sum_{k=0}^{\infty} \frac{ \Gamma(a_1 + k)\Gamma(a_2 + k) \Gamma(k) }{ \Gamma(b_1 + k)\Gamma(b_2 + k) \Gamma(b_3 + k)  } \frac{z^k}{k!},
\end{equation*}
\begin{lem}\label{lem:propo24}
Integral $I=\int_{0}^{\infty}\left(\int_{0}^{\infty}(s+x)^{H-3/2}e^{-\lambda(s+x)}ds\right)^2dx$ is finite for any $H>0$.
\end{lem}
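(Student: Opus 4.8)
The plan is to split the inner integral at $s+x=1$ (equivalently, to separate the region where the integrand is potentially singular from the region where it decays exponentially) and to bound the outer integral separately on $x\in(0,1)$ and $x\in(1,\infty)$. Write $\varphi(x):=\int_0^\infty (s+x)^{H-3/2}e^{-\lambda(s+x)}\,ds$. After the substitution $u=s+x$ this becomes $\varphi(x)=\int_x^\infty u^{H-3/2}e^{-\lambda u}\,du$, so $I=\int_0^\infty \varphi(x)^2\,dx$, and the whole issue is the behaviour of $\varphi$ near $x=0$ (where $u^{H-3/2}$ may fail to be integrable at $0$ when $H\le 1/2$) and as $x\to\infty$ (where exponential decay should make everything trivially finite).

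First I would treat $x\ge 1$. There $\varphi(x)=\int_x^\infty u^{H-3/2}e^{-\lambda u}\,du\le \int_x^\infty (u^{H-3/2}\vee 1)\,e^{-\lambda u}\,du$, and since the integrand decays exponentially one gets $\varphi(x)\le C\,e^{-\lambda x/2}$ for $x\ge 1$ (absorbing any polynomial factor into the exponential, using the defined notation $a\vee b$). Hence $\int_1^\infty \varphi(x)^2\,dx\le C\int_1^\infty e^{-\lambda x}\,dx<\infty$. Next, for $x\in(0,1)$ I would bound $\varphi(x)\le \int_x^1 u^{H-3/2}\,du+\int_1^\infty u^{H-3/2}e^{-\lambda u}\,du$; the second term is a finite constant, and the first is $\frac{1}{H-1/2}\bigl(1-x^{H-1/2}\bigr)$ when $H\ne 1/2$, which is $O(x^{H-1/2})$ as $x\downarrow 0$ precisely when $H<1/2$ (and bounded when $H>1/2$), while for $H=1/2$ it equals $-\log x$. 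In every case $\varphi(x)^2$ is integrable on $(0,1)$: for $H<1/2$ because $\int_0^1 x^{2H-1}\,dx<\infty$ since $2H>0$; for $H=1/2$ because $\int_0^1(\log x)^2\,dx<\infty$; for $H>1/2$ because $\varphi$ is bounded there.

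Adding the two contributions gives $I<\infty$ for every $H>0$, which is exactly the claim. The only mildly delicate point — and the place I would be most careful — is the small-$x$ regime when $H<1/2$: one must observe that although $u^{H-3/2}$ is non-integrable at $0$, the lower limit of integration in $\varphi$ is $x>0$, so $\varphi(x)$ only blows up like $x^{H-1/2}$, and squaring and integrating is still fine because $2H>0$. Everything else is a routine exponential-decay estimate. (Note that this also recovers the finiteness of $I_1(\infty)$ invoked in the proof of Proposition \ref{lem:usefulrelations 1}(b).)
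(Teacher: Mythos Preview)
Your proof is correct and follows essentially the same approach as the paper: both start from the substitution $\varphi(x)=\int_x^\infty u^{H-3/2}e^{-\lambda u}\,du$ and control the potential blow-up at $x=0$ via the elementary power bound $\varphi(x)\lesssim x^{H-1/2}$ (respectively $-\log x$) when $H<1/2$ (respectively $H=1/2$), together with exponential decay elsewhere. The only cosmetic difference is organizational: the paper splits first into the three cases $H<1/2$, $H>1/2$, $H=1/2$ and for $H\neq 1/2$ produces a single bound valid for all $x>0$ (e.g.\ for $H<1/2$ it pulls out $e^{-2\lambda x}$ and evaluates $\int_x^\infty s^{H-3/2}\,ds$ exactly, yielding $I\le (H-1/2)^{-2}(2\lambda)^{-2H}\Gamma(2H)$), whereas you split the outer integral at $x=1$ uniformly in $H$ and only afterwards distinguish the three cases on $(0,1)$; both routes use the same ingredients and the same key observation that $2H>0$ makes $x^{2H-1}$ integrable near the origin.
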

\begin{proof} Let $H<1/2$. Then
\begin{equation*}\begin{gathered}I=\int_{0}^{\infty}\left(\int_{x}^{\infty}s^{H-3/2}e^{-\lambda s}ds\right)^2dx\leq \int_{0}^{\infty}e^{-2\lambda x}\left(\int_{x}^{\infty}s^{H-3/2}ds\right)^2dx\\=\left(H-1/2\right)^{-2}\int_{0}^{\infty}e^{-2\lambda x}x^{2H-1}dx
=\left(H-1/2\right)^{-2}(2\lambda)^{-2H}\Gamma(2H)<\infty.
  \end{gathered}\end{equation*}
  Let $H>1/2$. Then
  \begin{equation*}\begin{gathered}I=\int_{0}^{\infty}\left(\int_{x}^{\infty}s^{H-3/2}e^{-\lambda s}ds\right)^2dx\leq
  \int_{0}^{\infty}e^{-\lambda x}\left(\int_{x}^{\infty}s^{H-3/2}e^{-\frac{ \lambda s}{2}}ds\right)^2dx\\\leq
  \int_{0}^{\infty}e^{-\lambda x}\left(\int_{0}^{\infty}s^{H-3/2}e^{-\frac{ \lambda s}{2}}ds\right)^2dx
  =2^{2H-1}\lambda^{-2H}\Gamma^2(H-1/2)<\infty.
   \end{gathered}\end{equation*}
   Finally, let $H=1/2$. Then
   \begin{equation*}\begin{gathered}I=\int_{0}^{\infty}\left(\int_{x}^{\infty}s^{-1}e^{-\lambda s}ds\right)^2dx\leq
  \int_{0}^{1}x^{-1/2}\left(\int_{0}^{\infty}s^{ -3/4}e^{-\frac{ \lambda s}{2}}ds\right)^2dx\\+\int_{1}^{\infty}x^{-2}\left(\int_{0}^{\infty} e^{-\lambda s}ds\right)^2dx<\infty,
   \end{gathered}\end{equation*}
   and the proof follows.
  \end{proof}

\section{Appendix  B }\label{sec:appendix-GA}
This appendix section is devoted to the essential elements of Gaussian analysis and Malliavin calculus. For the sake of completeness, we also present some known results in Malliavin--Stein method that are used in this paper. For the first part, the reader can consult \cite{Nourdin-Peccati-Bible,nualart,DavidEulaliaBook} for further details. A comprehensive reference on the Malliavin--Stein method is the excellent monograph \cite{Nourdin-Peccati-Bible}.

\subsection{Elements of Gaussian Analysis}\label{ss:isonormal}

Let $ \HH$ be a real separable Hilbert space. For any $q\geq 1$, we write $ \HH^{\otimes q}$ and $ \HH^{\odot q}$ to
indicate, respectively, the $q$th tensor power and the $q$th symmetric tensor power of $ \HH$; we also set by convention
$ \HH^{\otimes 0} =  \HH^{\odot 0} =\R$. When $\HH = L^2(A,\mathcal{A}, \mu) =:L^2(\mu)$, where $\mu$ is a $\sigma$-finite
and non-atomic measure on the measurable space $(A,\mathcal{A})$, then $ \HH^{\otimes q} = L^2(A^q,\mathcal{A}^q,\mu^q)=:L^2(\mu^q)$, and $ \HH^{\odot q} = L_s^2(A^q,\mathcal{A}^q,\mu^q) := L_s^2(\mu^q)$,
where $L_s^2(\mu^q)$ stands for the subspace of $L^2(\mu^q)$ composed of those functions that are $\mu^q$-almost everywhere symmetric. We denote by $W=\{W(h) : h\in  \HH\}$
an {\it isonormal Gaussian process} over $ \HH$. This means that $W$ is a centered Gaussian family, defined on some probability space $(\Omega ,\mathcal{F},P)$, with a covariance structure given by the relation
$\E\left[ W(h)W(g)\right] =\langle h,g\rangle _{ \HH}$. We also assume that $\mathcal{F}=\sigma(W)$, that is, $\mathcal{F}$ is generated by $W$, and use the shorthand notation $L^2(\Omega) := L^2(\Omega, \mathcal{F}, P)$.

For every $q\geq 1$, the symbol $C_{q}$ stands for the $q$th {\it Wiener chaos} of $W$, defined as the closed linear subspace of $L^2(\Omega)$
generated by the family $\{H_{q}(W(h)) : h\in  \HH,\left\| h\right\| _{ \HH}=1\}$, where $H_{q}$ is the $q$th Hermite polynomial, defined as follows:
\begin{equation}\label{hq}
H_q(x) = (-1)^q e^{\frac{x^2}{2}}
\frac{d^q}{dx^q} \big( e^{-\frac{x^2}{2}} \big).
\end{equation}
We write by convention $C_{0} = \mathbb{R}$. For
any $q\geq 1$, the mapping $I_{q}(h^{\otimes q})=H_{q}(W(h))$ can be extended to a
linear isometry between the symmetric tensor product $ \HH^{\odot q}$
(equipped with the modified norm $\sqrt{q!}\left\| \cdot \right\| _{ \HH^{\otimes q}}$)
and the $q$th Wiener chaos $C_{q}$. For $q=0$, we write by convention $I_{0}(c)=c$, $c\in\mathbb{R}$.

It is well-known that $L^2(\Omega)$ can be decomposed into the infinite orthogonal sum of the spaces $C_{q}$: this means that any square-integrable random variable
$F\in L^2(\Omega)$ admits the following {\it Wiener-It\^{o} chaotic expansion}
\begin{equation}
F=\sum_{q=0}^{\infty }I_{q}(f_{q}),  \label{E}
\end{equation}
where the series converges in $L^2(\Omega)$, $f_{0}=E[F]$, and the kernels $f_{q}\in  \HH^{\odot q}$, $q\geq 1$, are
uniquely determined by $F$. For every $q\geq 0$, we denote by $J_{q}$ the
orthogonal projection operator on the $q$th Wiener chaos. In particular, if
$F\in L^2(\Omega)$ has the form (\ref{E}), then
$J_{q}F=I_{q}(f_{q})$ for every $q\geq 0$.

Let $\{e_{k},\,k\geq 1\}$ be a complete orthonormal system in $\HH$. Given $f\in  \HH^{\odot p}$ and $g\in \HH^{\odot q}$, for every
$r=0,\ldots ,p\wedge q$, the \textit{contraction} of $f$ and $g$ of order $r$
is the element of $ \HH^{\otimes (p+q-2r)}$ defined by
\begin{equation}
f\otimes _{r}g=\sum_{i_{1},\ldots ,i_{r}=1}^{\infty }\langle
f,e_{i_{1}}\otimes \ldots \otimes e_{i_{r}}\rangle _{ \HH^{\otimes
		r}}\otimes \langle g,e_{i_{1}}\otimes \ldots \otimes e_{i_{r}}
\rangle_{ \HH^{\otimes r}}.  \label{v2}
\end{equation}
Notice that the definition of $f\otimes_r g$ does not depend
on the particular choice of $\{e_k,\,k\geq 1\}$, and that
$f\otimes _{r}g$ is not necessarily symmetric; we denote its
symmetrization by $f\widetilde{\otimes }_{r}g\in  \HH^{\odot (p+q-2r)}$.
Moreover, $f\otimes _{0}g=f\otimes g$ equals the tensor product of $f$ and
$g$ while, for $p=q$, $f\otimes _{q}g=\langle f,g\rangle _{ \HH^{\otimes q}}$.
When $\HH = L^2(A,\mathcal{A},\mu)$ and $r=1,...,p\wedge q$, the contraction $f\otimes _{r}g$ is the element of $L^2(\mu^{p+q-2r})$ given by
\begin{eqnarray}\label{e:contraction}
 f\otimes _{r}g (x_1,...,x_{p+q-2r})  && = \int_{A^r} f(x_1,...,x_{p-r},a_1,...,a_r) \notag\\
&& \quad\quad\quad\quad \times g(x_{p-r+1},...,x_{p+q-2r},a_1,...,a_r)d\mu(a_1)...d\mu(a_r). \notag
\end{eqnarray}

It is a standard fact of Gaussian analysis that the following {\it multiplication formula} holds: if $f\in  \HH^{\odot p}$ and $g\in  \HH^{\odot q}$, then
\begin{eqnarray}\label{multiplication}
I_p(f) I_q(g) = \sum_{r=0}^{p \wedge q} r! {p \choose r}{ q \choose r} I_{p+q-2r} (f\widetilde{\otimes}_{r}g).
\end{eqnarray}
\smallskip

We now introduce some basic elements of the Malliavin calculus with respect
to the isonormal Gaussian process $W$. Let $\mathcal{S}$
be the set of all
cylindrical random variables of
the form
\begin{equation}
F=g\left( W(\phi _{1}),\ldots ,W(\phi _{n})\right) ,  \label{v3}
\end{equation}
where $n\geq 1$, $g:\mathbb{R}^{n}\rightarrow \mathbb{R}$ is an infinitely
differentiable function such that its partial derivatives have polynomial growth, and $\phi _{i}\in  \HH$,
$i=1,\ldots,n$.
The {\it Malliavin derivative}  of $F$ with respect to $W$ is the element of $L^2(\Omega , \HH)$ defined as
\begin{equation*}
DF\;=\;\sum_{i=1}^{n}\frac{\partial g}{\partial x_{i}}\left( W(\phi_{1}),\ldots ,W(\phi _{n})\right) \phi _{i}.
\end{equation*}
In particular, $DW(h)=h$ for every $h\in  \HH$. By iteration, one can define the $m$th derivative $D^{m}F$, which is an element of $L^2(\Omega , \HH^{\odot m})$,
for every $m\geq 2$.
For $m\geq 1$ and $p\geq 1$, ${\mathbb{D}}^{m,p}$ denotes the closure of
$\mathcal{S}$ with respect to the norm $\Vert \cdot \Vert _{m,p}$, defined by
the relation
\begin{equation*}
\Vert F\Vert _{m,p}^{p}\;=\;\E\left[ |F|^{p}\right] +\sum_{i=1}^{m}\E\left[
\Vert D^{i}F\Vert _{ \HH^{\otimes i}}^{p}\right].
\end{equation*}
We often use the (canonical) notation $\mathbb{D}^{\infty} := \bigcap_{m\geq 1}
\bigcap_{p\geq 1}\mathbb{D}^{m,p}$.


The Malliavin derivative $D$ obeys the following \textsl{chain rule}. If
$\varphi :\mathbb{R}^{n}\rightarrow \mathbb{R}$ is continuously
differentiable with bounded partial derivatives and if $F=(F_{1},\ldots
,F_{n})$ is a vector of elements of ${\mathbb{D}}^{1,2}$, then $\varphi
(F)\in {\mathbb{D}}^{1,2}$ and
\begin{equation}\label{e:chainrule}
D\,\varphi (F)=\sum_{i=1}^{n}\frac{\partial \varphi }{\partial x_{i}}(F)DF_{i}.
\end{equation}

Note also that a random variable $F$ as in (\ref{E}) is in ${\mathbb{D}}^{1,2}$ if and only if
$\sum_{q=1}^{\infty }q\|J_qF\|^2_{L^2(\Omega)}<\infty$
and in this case one has the following explicit relation: $$\E\left[ \Vert DF\Vert _{ \HH}^{2}\right]
=\sum_{q=1}^{\infty }q\|J_qF\|^2_{L^2(\Omega)}.$$ If $ \HH=
L^{2}(A,\mathcal{A},\mu )$ (with $\mu $ non-atomic), then the
derivative of a random variable $F$ as in (\ref{E}) can be identified with
the element of $L^2(A \times \Omega )$ given by
\begin{equation}
D_{t}F=\sum_{q=1}^{\infty }qI_{q-1}\left( f_{q}(\cdot ,t)\right) ,\quad t \in A.  \label{dtf}
\end{equation}


The operator $L$, defined as $L=-\sum_{q=0}^{\infty }qJ_{q}$, is the {\it infinitesimal generator of the Ornstein-Uhlenbeck semigroup}. The domain of $L$ is
\begin{equation*}
\mathrm{Dom}L=\{F\in L^2(\Omega ):\sum_{q=1}^{\infty }q^{2}\left\|
J_{q}F\right\| _{L^2(\Omega )}^{2}<\infty \}=\mathbb{D}^{2,2}\text{.}
\end{equation*}


For any $F \in L^2(\Omega )$, we define $L^{-1}F =-\sum_{q=1}^{\infty }\frac{1}{q} J_{q}(F)$. The operator $L^{-1}$ is called the
\textit{pseudo-inverse} of $L$. Indeed, for any $F \in L^2(\Omega )$, we have that $L^{-1} F \in  \mathrm{Dom}L
= \mathbb{D}^{2,2}$,
and
\begin{equation}\label{Lmoins1}
LL^{-1} F = F - \E(F).
\end{equation}



\subsection{Malliavin--Stein method:   selective results}\label{App:B-MS}
Next, we collect some known findings in the realm of Malliavin--Stein method that we have used in Section  \ref{sec:BM}. We begin with the celebrated {\it fourth moment theorem}.

\begin{thm}[Fourth Moment Theorem and Ramifications, see  \cite{n-p-05,Nualart-Latorre,NP-PTRF,n-p-optimal,n-p-AOP-Exact-Asymptotic}] \label{thm:4MT}
Fix $q \ge 2$. Let $F_n = I_q (f_n), n\ge 1$ be a sequence of elements belonging to the $q$th Wiener chaos of some isonormal Gaussian process $W = \{ W(h)  :  h \in \HH \}$ such that $\E[F^2_n]= q!  \Vert f_n \Vert^2_{\HH^{\otimes q}} =1$ for every $n \ge 1$.
\begin{description}
\item[(a)] Then the following asymptotic statements are equivalent as $n \to \infty$:
\begin{itemize}
	\item[(a)] $F_n$ converges in distribution towards $\mathcal{ N }(0,1)$.
	\item[(b)]  $\E[F^4_n]  \to 3$.
	\item[(c)]  $\Vert    f_n  \otimes_r f_n  \Vert_{\HH^{\otimes (2q-2r)}}  \to 0$ for $r=1,...,q-1$.
	\item[(d)]   $\Vert  DF_n \Vert^2_{\HH}   \to q  $  in $L^2$.
	\end{itemize}
\item[(b)] Furthermore, whenever one of the equivalent statements at item {\bf (a)} take place then there exist two constants $C_1$ and $C_2$ (independent of $n$) such that the following optimal rate of convergence in total variation distance holds:
\[ C_1 \,  \max\{ \abs{\kappa_3(F_n)}, \kappa_4(F_n) \} \leq d_{TV}(F_n,N) \leq  C_2 \, \max\{ \abs{\kappa_3(F_n)}, \kappa_4(F_n)  \}.\]
\item[(c)] Assume one of the  equivalent statements at item {\bf (a)} take place. Let $G_n$, $n\ge 1$ be a sequence  of the form
\[  G_n  =  \sum_{p=1}^{M}  I_p (g^{(p)}_n)  \]
for  $M\ge 1$  (independent of $n$) and some kernels $g^{(p)}_n  \in  \HH^{ \odot p},  p=1,...,M$.  Suppose that  as $n$ tends to infinity,
\[ \E[G^2_n]  = \sum_{p=1}^{M} p!  \Vert   g^{(p)}_n \Vert^2_{\HH^{\otimes p}}  \to  c^2  >0,  \quad   \Vert  g^{(p)}_n \otimes_r g^{(p)}_n \Vert_{\HH^{\otimes (2p-2r)}}  \to 0,   \quad \forall \, r=1,...,p-1  \]
and every $p=1,...,M$. If furthermore,  sequence $\E[F_nG_n] \to  \rho$, then sequence $(F_n,G_n)$ converges in distribution towards a two dimensional centered Gaussian vector $(N_1,N_2)$ with $\E[N^2_1]=1$, $\E[N^2_2] =c^2$, and $\E[N_1 N_2] =\rho$.
\end{description}
\end{thm}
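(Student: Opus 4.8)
\medskip
\noindent\textbf{Proof strategy.}
All three parts are classical, and the plan is to assemble them from \cite{n-p-05,Nualart-Latorre,NP-PTRF,n-p-optimal,n-p-AOP-Exact-Asymptotic}. For part \textbf{(a)} I would route everything through the contraction norms $\|f_n\otimes_r f_n\|_{\HH^{\otimes(2q-2r)}}$, $r=1,\dots,q-1$. First, using the multiplication formula \eqref{multiplication}, I expand $F_n^2$ and, separately, $\tfrac1q\|DF_n\|_\HH^2$ (recall $DF_n$ is $q$ times the $(q-1)$st multiple integral of the kernel of $F_n$) as finite sums of multiple integrals whose kernels are the symmetrized contractions $f_n\widetilde\otimes_r f_n$. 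Taking variances and using orthogonality and the isometry property of the $I_p$'s yields the algebraic identities
\[ \E[F_n^4]-3\;=\;\sum_{r=1}^{q-1} c_{q,r}\,\|f_n\widetilde\otimes_r f_n\|_{\HH^{\otimes(2q-2r)}}^2\;+\;(\text{a nonnegative remainder}), \]
\[ \Var\!\Big(\tfrac1q\|DF_n\|_\HH^2\Big)\;=\;\sum_{r=1}^{q-1} c'_{q,r}\,\|f_n\widetilde\otimes_r f_n\|_{\HH^{\otimes(2q-2r)}}^2, \]
with positive constants $c_{q,r},c'_{q,r}$; since a contraction and its symmetrization have comparable norms, this gives the equivalences (b)$\Leftrightarrow$(c)$\Leftrightarrow$(d) within part \textbf{(a)}. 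Then (c)$\Rightarrow$(a) follows from the Malliavin--Stein bound: because $-DL^{-1}F_n=\tfrac1q DF_n$ and $\E[\tfrac1q\|DF_n\|_\HH^2]=\E[F_n^2]=1$, one has $d_{TV}(F_n,N)\le\E\big|1-\tfrac1q\|DF_n\|_\HH^2\big|\le\big(\Var(\tfrac1q\|DF_n\|_\HH^2)\big)^{1/2}\to0$. Finally (a)$\Rightarrow$(b) is soft: hypercontractivity on the fixed chaos $C_q$ bounds $\sup_n\E[F_n^{4+\delta}]$ for some $\delta>0$, so $\{F_n^4\}$ is uniformly integrable and $\E[F_n^4]\to\E[N^4]=3$.

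For part \textbf{(b)} I would invoke the optimal fourth moment theorem of \cite{n-p-optimal}. The upper bound comes from a second-order Stein expansion in which the dominant error term is $\E[F_n^3]=\kappa_3(F_n)$ and the subdominant one $\E[F_n^4]-3=\kappa_4(F_n)$, refining the plain bound $d_{TV}(F_n,N)\le C\sqrt{\kappa_4(F_n)}$ used implicitly in part (a). For the matching lower bound I would test the laws of $F_n$ and $N$ against bounded smoothings of $H_3$ and $H_4$: since $\E[H_3(F_n)]=\kappa_3(F_n)$ and $\E[H_4(F_n)]=\kappa_4(F_n)$ while $\E[H_3(N)]=\E[H_4(N)]=0$, and the smoothing error is of smaller order, one extracts $d_{TV}(F_n,N)\ge C_1\max\{|\kappa_3(F_n)|,\kappa_4(F_n)\}$.

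For part \textbf{(c)} the plan is to reduce to the Peccati--Tudor multivariate fourth moment theorem \cite{NP-PTRF}. Write $G_n=\sum_{p=1}^M I_p(g_n^{(p)})$ and consider the vector $\big(F_n,I_1(g_n^{(1)}),\dots,I_M(g_n^{(M)})\big)$, each coordinate belonging to a fixed Wiener chaos. By hypothesis every coordinate satisfies the one-dimensional fourth moment condition (equivalently condition (c) of part (a)), and the covariance matrix of the vector converges: the $F_n$-diagonal entry by $\E[F_n^2]\to1$, the $g^{(p)}$-diagonal entries by $\sum_p p!\|g_n^{(p)}\|^2\to c^2$, the $(F_n,G_n)$-cross entry by $\E[F_nG_n]\to\rho$, and the remaining off-diagonal entries to $0$ because covariances between multiple integrals of different orders vanish. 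By Peccati--Tudor the whole vector converges jointly to a centered Gaussian vector; summing its last $M$ coordinates gives $(F_n,G_n)\limd(N_1,N_2)$ with $\E[N_1^2]=1$, $\E[N_2^2]=c^2$, $\E[N_1N_2]=\rho$.

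The hard part will be the two genuinely non-soft ingredients. The first is the exact bookkeeping behind the part-(a) identities linking $\kappa_4(F_n)$, $\Var(\tfrac1q\|DF_n\|_\HH^2)$ and the contraction norms, i.e.\ the combinatorial constants $c_{q,r},c'_{q,r}$ and the passage between a contraction and its symmetrization. The second, and more serious, is the \emph{optimal} (two-sided) rate in part (b): both the refined Stein expansion of \cite{n-p-optimal} for the upper bound and the careful choice of test functions for the lower bound go substantially beyond the plain Malliavin--Stein estimate. By contrast, the implication (a)$\Rightarrow$(b) and the covariance-matrix convergence in part (c) are routine.
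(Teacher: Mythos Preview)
The paper does not give its own proof of this theorem: it is stated in Appendix~B as a known result, with references to \cite{n-p-05,Nualart-Latorre,NP-PTRF,n-p-optimal,n-p-AOP-Exact-Asymptotic}, and no argument is supplied. Your proposal is therefore not in conflict with anything in the paper; you simply go further than the paper by sketching the standard proofs from those same sources, and your outline for all three parts is accurate.
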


\begin{thm}[Peccati-Tudor Multidimensional Fourth Moment Theorem \cite{Nourdin-Peccati-Bible},  Theorem 6.2.3] \label{thm:4MT-MultiDim}
Fix $d \ge 2$, and $q_1,...,q_d \ge 1$. Let $F_n  =  ( F_{1,n},...,F_{d,n}  ) = (  I_{q_1}(f_{1,n}),...,I_{q_d} (f_{d,n})  ),   n\ge 1$  with the kernels $f_{n,j} \in  \HH^{\odot j}$ for $j=1,...,d$ and every $n$.  Let $N \sim \mathcal{ N }_d (0,C)$ denote a $d$ dimensional centered Gaussian vector with a symmetric, non-negative covariance matrix $C$. Assume that $\E[F_{i,n} F_{n,j}]  \to C_{i,j}$ as $n \to \infty$. Then the following asymptotic statements are equivalent.
\begin{itemize}
	\item[(a)] $F_n  \to N$  is distribution.
	\item[(b)] for every $j=1,...,d$, sequence $F_{j,n} \to \mathcal{ N }(0,C_{j,j})$ in distribution.
	\end{itemize}
	
	\end{thm}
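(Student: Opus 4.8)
The content to be proved is that componentwise convergence of the marginals to Gaussian limits upgrades automatically to joint convergence of the vector $F_n$. The implication $(a)\Rightarrow(b)$ is immediate, since the marginals of a weakly convergent sequence converge. So the whole task is $(b)\Rightarrow(a)$, and the plan is to run the multivariate Malliavin--Stein method. First I would invoke the one-dimensional fourth moment Theorem \ref{thm:4MT}: for each fixed $j$, after rescaling $F_{j,n}$ to unit variance, the convergence $F_{j,n}\to\mathcal{N}(0,C_{j,j})$ is equivalent to the contraction conditions $\Vert f_{j,n}\otimes_r f_{j,n}\Vert_{\HH^{\otimes(2q_j-2r)}}\to 0$ for all $r=1,\ldots,q_j-1$ (and to $\E[F_{j,n}^4]\to 3C_{j,j}^2$). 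Thus assumption $(b)$ supplies all the within-component contraction estimates for free.

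The key step, and the one I expect to be the main obstacle, is to promote these within-component estimates to control of the cross terms $\langle DF_{i,n},DF_{j,n}\rangle_\HH$. Concretely I would prove that, as $n\to\infty$,
\[ \tfrac{1}{q_j}\langle DF_{i,n},DF_{j,n}\rangle_\HH \longrightarrow C_{i,j}\quad\text{in }L^2(\Omega),\qquad i,j=1,\ldots,d. \]
The mean of the left-hand side equals $\E[F_{i,n}F_{j,n}]\to C_{i,j}$, using $-DL^{-1}F_{j,n}=q_j^{-1}DF_{j,n}$ on the $q_j$th chaos together with the integration-by-parts identity $\E[\langle DF,-DL^{-1}G\rangle_\HH]=\E[FG]$ for centered $F,G$; hence it suffices to show the variance vanishes. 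Expanding $\langle DF_{i,n},DF_{j,n}\rangle_\HH$ through the product formula \eqref{multiplication} produces a finite sum of multiple integrals whose kernels are the contractions $f_{i,n}\otimes_r f_{j,n}$, and a Cauchy--Schwarz argument bounds $\Vert f_{i,n}\otimes_r f_{j,n}\Vert$ by a geometric mean of $\Vert f_{i,n}\otimes_s f_{i,n}\Vert$ and $\Vert f_{j,n}\otimes_t f_{j,n}\Vert$ for suitable orders $s,t$. Consequently the within-component estimates of the previous paragraph force every nontrivial chaotic component of $\langle DF_{i,n},DF_{j,n}\rangle_\HH$ to $0$ in $L^2$, which is exactly the claimed convergence. (This is the Peccati--Tudor / Nourdin--Peccati--R\'eveillac computation, and I would isolate the cross-contraction bound as a lemma.)

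Finally, with the $L^2$-convergence of all the quantities $q_j^{-1}\langle DF_{i,n},DF_{j,n}\rangle_\HH$ in hand, I would invoke the multivariate Malliavin--Stein bound: for $\varphi\in C^3_b(\R^d)$,
\[ \bigl|\,\E[\varphi(F_n)]-\E[\varphi(N)]\,\bigr| \le c(\varphi,d)\sum_{i,j=1}^{d}\E\Bigl|\,C_{i,j}-\tfrac{1}{q_j}\langle DF_{i,n},DF_{j,n}\rangle_\HH\,\Bigr|, \]
whose right-hand side tends to $0$. Since test functions of this class are convergence-determining, this gives $F_n\to N$ in distribution, which is $(a)$, and the proof is complete. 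The only delicate point beyond routine estimates is the cross-contraction estimate in the middle step; everything else is bookkeeping with multiple Wiener--It\^o integrals.
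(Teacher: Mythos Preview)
The paper does not give its own proof of this statement: Theorem \ref{thm:4MT-MultiDim} is listed in Appendix B among ``selective results'' from the Malliavin--Stein literature and is simply quoted from \cite[Theorem 6.2.3]{Nourdin-Peccati-Bible} without argument. So there is nothing in the paper to compare your proposal against.

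That said, your sketch is the standard route to the Peccati--Tudor theorem and is essentially the argument one finds in \cite{Nourdin-Peccati-Bible}. The logic --- use the one-dimensional fourth moment theorem to extract the diagonal contraction conditions $\Vert f_{j,n}\otimes_r f_{j,n}\Vert\to 0$, upgrade these to the off-diagonal contractions $\Vert f_{i,n}\otimes_r f_{j,n}\Vert\to 0$ via the Cauchy--Schwarz-type identity, deduce $L^2$-convergence of $q_j^{-1}\langle DF_{i,n},DF_{j,n}\rangle_\HH$ to $C_{i,j}$, and conclude through a multivariate Stein/interpolation bound tested against smooth $\varphi$ --- is correct. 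Two small points worth tightening if you were to write this out in full: (i) the cross-contraction estimate is cleanest via the exact identity $\Vert f\otimes_r g\Vert^2=\langle f\otimes_{p-r} f,\, g\otimes_{q-r} g\rangle$ (valid for $f\in\HH^{\odot p}$, $g\in\HH^{\odot q}$), which immediately yields the geometric-mean bound you allude to; and (ii) the multivariate bound you quote in the displayed inequality is usually derived either by Stein's method when $C$ is nondegenerate or by the smart-path interpolation for general $C$ --- since the theorem allows $C$ to be merely non-negative, you should make clear you are using the latter.
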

Now we recall Breuer-Major Theorem, see \cite{Breuer} or \cite[Theorem 7.2.4]{Nourdin-Peccati-Bible} for a modern treatment that is a cornerstone piece in Section \ref{sec:BM}.
\begin{thm}\label{thm:Breuer-Major}
	Let $X =\{X_k,  k\in\mathbb{Z}\}$ be a centered Gaussian stationary sequence with unit variance and set $r(k)=\mathbb{E}[X_{0}X_{k}]$ for every $k \in \mathbb{Z}$. Let $\gamma$ be the standard normal $\mathcal{N}(0,1)$ distribution and $f\in L^{2}(\mathbb{R}, \gamma)$ be a fixed deterministic function such that $\mathbb{E}[f(X_1)]=0$ and $f$ has Hermite rank $d\geq 1$, that means, that $f$ admits the Hermite expansion
	\begin{equation*}
	f(x)=\sum_{j=d}^{\infty}a_j {H_j}(x),
	\end{equation*}
	where $H_{j}$ is the $j$-Hermite polynomial,  and $a_d \not =0$. Define $V_{n}=\frac{1}{\sqrt{n}}\sum_{k=1}^{n}f(X_k)$.
	Suppose that $\sum_{\nu\in\mathbb{Z}}|r(\nu)|^{d}<\infty$. Then
	\begin{equation*}
	\sigma^2 : =\sum_{j=d}^{\infty}j! a^{2}_{j}\sum_{\nu\in\mathbb{Z}}r(\nu)^{j}\in [0,\infty),
	\end{equation*}
	and the convergence
	\begin{equation*}
	V_{n}\limd \mathcal{N}(0,\sigma^2)
	\end{equation*}
	holds as $n\to\infty$.
\end{thm}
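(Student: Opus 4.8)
\emph{Proof strategy.} The plan is to run the Malliavin--Stein/fourth-moment machinery, exactly in the spirit of Section~\ref{sec:BM}. Realize the sequence as $X_k=W(\varepsilon_k)$, $k\in\Z$, for an isonormal Gaussian process $W$ over a separable Hilbert space $\HH$ with $\ip{\varepsilon_k}{\varepsilon_l}_{\HH}=r(k-l)$, and expand $f$ along the Wiener chaoses: writing $V_{n,q}:=\frac{1}{\sqrt n}\sum_{k=1}^{n}H_q(X_k)=I_q(f_{n,q})$ with $f_{n,q}:=\frac{1}{\sqrt n}\sum_{k=1}^{n}\varepsilon_k^{\otimes q}$, one has $V_n=\sum_{q\ge d}a_qV_{n,q}$. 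First I would settle the variance: by orthogonality of distinct chaoses, $\Var(V_n)=\sum_{q\ge d}a_q^2\Var(V_{n,q})$, with
\[
\Var(V_{n,q})=\frac{q!}{n}\sum_{k,l=1}^{n}r(k-l)^q=q!\sum_{\abs k<n}\Bigl(1-\frac{\abs k}{n}\Bigr)r(k)^q .
\]
Since $\abs{r(k)}\le1$ and $q\ge d$ one has $\abs{r(k)}^q\le\abs{r(k)}^d$, so the hypothesis $\sum_\nu\abs{r(\nu)}^d<\infty$ together with dominated convergence (first in $k$, then in $q$ against the summable weights $q!a_q^2$, using $\sum_qq!a_q^2=\norm f_{L^2(\R,\gamma)}^2<\infty$) gives $\Var(V_{n,q})\to\sigma_q^2:=q!\sum_{k\in\Z}r(k)^q\ge0$ and $\Var(V_n)\to\sigma^2\in[0,\infty)$, which establishes the claim $\sigma^2\in[0,\infty)$.

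Next I would prove the central limit theorem in each fixed chaos and then patch the chaoses together. Fix $q\ge d$: if $\sigma_q^2=0$ then $V_{n,q}\to0$ in $L^2$, so assume $\sigma_q^2>0$, normalize $\widehat V_{n,q}:=V_{n,q}/\sqrt{\Var(V_{n,q})}$, and apply the Fourth Moment Theorem~\ref{thm:4MT}. A direct computation with the kernels $f_{n,q}$ gives, for $1\le s\le q-1$,
\[
\norm{f_{n,q}\otimes_s f_{n,q}}_{\HH^{\otimes(2q-2s)}}^2=\frac{1}{n^2}\sum_{k,l,k',l'=1}^{n}r(k-l)^{s}\,r(k'-l')^{s}\,r(k-k')^{q-s}\,r(l-l')^{q-s},
\]
and, granting that this tends to $0$ for every such $s$ (the technical core, discussed below), the theorem yields $\widehat V_{n,q}\limd\mathcal N(0,1)$, hence $V_{n,q}\limd\mathcal N(0,\sigma_q^2)$. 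To upgrade this to $V_n$, I would truncate: for $V_n^N:=\sum_{q=d}^{N}a_qV_{n,q}$, the Peccati--Tudor Theorem~\ref{thm:4MT-MultiDim} applied to $(V_{n,d},\dots,V_{n,N})$ — marginal convergence from the previous step, convergence of the (diagonal, since distinct chaoses are orthogonal) covariance matrix from the variance computation — gives $V_n^N\limd\mathcal N(0,\sigma_N^2)$ with $\sigma_N^2=\sum_{q=d}^{N}a_q^2\sigma_q^2$. Finally the tail bound $\E[(V_n-V_n^N)^2]=\sum_{q>N}a_q^2\Var(V_{n,q})\le\bigl(\sum_{q>N}q!a_q^2\bigr)\sum_\nu\abs{r(\nu)}^d$ tends to $0$ as $N\to\infty$ uniformly in $n$, and $\sigma_N^2\to\sigma^2$; a standard approximation argument then delivers $V_n\limd\mathcal N(0,\sigma^2)$.

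The hard part is the contraction estimate. Via the change of variables $a=k-l$, $b=k'-l'$, $c=k-k'$ (so $l-l'=c-a+b$), for which at most $n$ quadruples $(k,l,k',l')\in\{1,\dots,n\}^4$ produce any prescribed $(a,b,c)$, the displayed quantity is dominated by
\[
\frac1n\sum_{\abs a,\abs b,\abs c<n}\abs{r(a)}^{s}\,\abs{r(b)}^{s}\,\abs{r(c)}^{q-s}\,\abs{r(c-a+b)}^{q-s},
\]
so everything reduces to showing this triple sum is $o(n)$. This is precisely the combinatorial lemma that lies at the heart of every proof of the Breuer--Major theorem; it uses the hypothesis $r\in\ell^d(\Z)$ in an essential way, through H\"older/Young estimates on convolutions of the powers $\abs r^{s},\abs r^{q-s}$ and the elementary fact that $\sum_{\abs m<n}\abs{r(m)}^p=o(n)$ for every $p\ge1$ (this sum being $O(1)$ when $p\ge d$ and $O(n^{1-p/d})$ when $p<d$). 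For this step I would simply invoke \cite{Breuer} and \cite[Theorem~7.2.4]{Nourdin-Peccati-Bible}; alternatively the whole theorem admits a classical proof via the spectral density of $X$, as in \cite[Section~7.3]{Nourdin-Peccati-Bible}.
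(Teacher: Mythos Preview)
The paper does not actually prove this statement: Theorem~\ref{thm:Breuer-Major} sits in Appendix~B as a recalled result, introduced with ``Now we recall Breuer--Major Theorem, see \cite{Breuer} or \cite[Theorem~7.2.4]{Nourdin-Peccati-Bible}'' and stated without any argument. So there is no paper proof to compare against; your proposal is effectively supplying one where the paper simply cites the literature.

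That said, what you wrote is exactly the modern proof from \cite[Theorem~7.2.4]{Nourdin-Peccati-Bible}: embed $X_k=W(\varepsilon_k)$, expand in chaoses, handle the variance by dominated convergence, verify the contraction condition of the Fourth Moment Theorem~\ref{thm:4MT} chaos by chaos, glue finitely many chaoses with Peccati--Tudor (Theorem~\ref{thm:4MT-MultiDim}), and control the tail uniformly in $n$. Your variance computation, the bound $\abs{r(k)}^q\le\abs{r(k)}^d$ for $q\ge d$, the tail estimate $\E[(V_n-V_n^N)^2]\le\bigl(\sum_{q>N}q!a_q^2\bigr)\sum_\nu\abs{r(\nu)}^d$, and the change of variables in the contraction norm are all correct. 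You are also honest that the combinatorial/convolution estimate forcing $\norm{f_{n,q}\otimes_s f_{n,q}}\to0$ is the technical heart and that you defer it to \cite{Breuer,Nourdin-Peccati-Bible}; since the paper itself defers the entire theorem to those same references, this is entirely in keeping with its level of detail.
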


\begin{thm}[See \cite{n-p-y19}]\label{thm:BM-TV-Rate}
	Let $N \sim \mathcal{ N }(0,1)$, and $X =\{X_k,  k\in\mathbb{Z}\}$ be a centered Gaussian stationary sequence with unit variance and covariance function $r(k)=\mathbb{E}[X_{0}X_{k}]$. Let $\gamma$ be the standard normal $\mathcal{N}(0,1)$ distribution and $f\in \mathbb{D}^{1,4} \subseteq L^2(\mathbb{R}, \gamma)$ be a fixed deterministic function such that $\mathbb{E}[f(X_1)]=0$. Let   $V_{n}=\frac{1}{\sqrt{n}}\sum_{k=1}^{n}f(X_k)$, and $\sigma^2_n =  \Var \left( V_n \right)$. Define  $F_n : =  \frac{V_n}{\sigma_n}$. Then there exists an explicit constant $C=C(f)$ such that for every $n \in \N$,
	\begin{equation}\label{eq:BM-TV-Rate}
	d_{TV}  (F_n,N)  \le   \frac{C(f)}{\sigma^2_n}  n^{- \frac{1}{2}}   \,  \left(  \sum_{\vert k \vert < n}  \vert  r (k) \vert   \right)^{\frac{3}{2}}
	\end{equation}
	\end{thm}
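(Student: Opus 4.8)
The plan is to follow the Malliavin--Stein route of \cite[Theorem 1.2]{n-p-y19}, from which this statement is imported. Realise the stationary sequence through an isonormal Gaussian process $W$ over a separable Hilbert space $\HH$: pick unit vectors $e_k\in\HH$ with $\langle e_k,e_l\rangle_\HH=r(k-l)$ and set $X_k=W(e_k)$. Since $f\in\mathbb{D}^{1,4}$ (one weak derivative, with $f,f'\in L^4(\R,\gamma)$), each $f(X_k)$ belongs to $\mathbb{D}^{1,4}$, hence so do $V_n$ and $F_n=V_n/\sigma_n$; in particular $\langle DF_n,-DL^{-1}F_n\rangle_\HH$ is square integrable (Cauchy--Schwarz together with $DF_n,\,DL^{-1}F_n\in L^4(\Omega;\HH)$). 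The standard Malliavin--Stein bound for the total variation distance \cite{Nourdin-Peccati-Bible} then applies,
\[
d_{TV}(F_n,N)\le 2\sqrt{\Var\bigl(\langle DF_n,-DL^{-1}F_n\rangle_\HH\bigr)},
\]
where I used $\E[\langle DF_n,-DL^{-1}F_n\rangle_\HH]=\Var(F_n)=1$. Everything thus reduces to estimating this variance.

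By the chain rule $D\bigl(f(X_k)\bigr)=f'(X_k)\,e_k$, while a short computation (or the Mehler formula $-DL^{-1}=\int_0^\infty e^{-t}P_t D\,dt$, $P_t$ the Ornstein--Uhlenbeck semigroup) gives $-DL^{-1}\bigl(f(X_k)\bigr)=\widetilde f(X_k)\,e_k$ with $\widetilde f:=\sum_{q\ge d}a_q H_{q-1}$; note $\|\widetilde f\|_{L^4(\gamma)}\le\|f'\|_{L^4(\gamma)}<\infty$ because $P_t$ is an $L^4$-contraction. Consequently
\[
\langle DF_n,-DL^{-1}F_n\rangle_\HH=\frac{1}{\sigma_n^2 n}\sum_{k,l=1}^{n}r(k-l)\,f'(X_k)\widetilde f(X_l),
\]
and, subtracting its mean,
\[
\Var\bigl(\langle DF_n,-DL^{-1}F_n\rangle_\HH\bigr)=\frac{1}{\sigma_n^4 n^2}\sum_{k,l,k',l'=1}^{n}r(k-l)\,r(k'-l')\,{\rm Cov}\bigl(f'(X_k)\widetilde f(X_l),\,f'(X_{k'})\widetilde f(X_{l'})\bigr).
\]

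The key estimate is of Gebelein type: for the jointly Gaussian standardised blocks $(X_k,X_l)$ and $(X_{k'},X_{l'})$ one shows that the covariance of the two product functionals is controlled by the \emph{cross} correlations between the two blocks only,
\[
\bigl|{\rm Cov}\bigl(f'(X_k)\widetilde f(X_l),\,f'(X_{k'})\widetilde f(X_{l'})\bigr)\bigr|\le C(f)\bigl(|r(k-k')|+|r(k-l')|+|r(l-k')|+|r(l-l')|\bigr),
\]
with $C(f)$ depending on $f$ only through $\|f'\|_{L^4(\gamma)}$ and $\|f\|_{L^2(\gamma)}$. Plugging this in and using the symmetry of the four cross terms, it suffices to bound $\sum_{k,l,k',l'=1}^{n}|r(k-k')|\,|r(k-l)|\,|r(k'-l')|$: summing successively over $l$, then over $l'$, then over the pair $(k,k')$, and writing $S_n:=\sum_{|\nu|<n}|r(\nu)|$, this is at most $n\,S_n^3$. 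Hence $\Var(\langle DF_n,-DL^{-1}F_n\rangle_\HH)\le C(f)\,\sigma_n^{-4}\,n^{-1}S_n^3$, and substituting into the Malliavin--Stein bound gives $d_{TV}(F_n,N)\le C(f)\,\sigma_n^{-2}\,n^{-1/2}S_n^{3/2}$, which is \eqref{eq:BM-TV-Rate}.

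The main obstacle is the Gebelein-type covariance bound: one must show that ${\rm Cov}(f'(X_k)\widetilde f(X_l),f'(X_{k'})\widetilde f(X_{l'}))$ is dominated by the cross correlations $r(k-k'),r(k-l'),r(l-k'),r(l-l')$ alone --- and \emph{not} by the within-block correlations $r(k-l),r(k'-l')$ --- uniformly in the geometry of the blocks, even when a within-block correlation is close to $\pm 1$. This is exactly the step where the $\mathbb{D}^{1,4}$ hypothesis is indispensable, since it is what makes the relevant product functionals square integrable and allows a Gaussian maximal-correlation argument to be run; it constitutes the technical core of \cite{n-p-y19}, and the remaining bookkeeping (the reduction above and the triple-sum estimate) is routine.
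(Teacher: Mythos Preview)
The paper does not prove this theorem: it is stated without proof in Appendix~B as a result imported verbatim from \cite{n-p-y19}, and is invoked only by citation in the proof of Theorem~\ref{thm:quantitative-TV-BM-Tempered}. There is thus no ``paper's own proof'' to compare against.

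That said, your sketch is a faithful outline of the argument in \cite{n-p-y19}: the Malliavin--Stein bound $d_{TV}(F_n,N)\le 2\sqrt{\Var(\langle DF_n,-DL^{-1}F_n\rangle_\HH)}$, the explicit identification $Df(X_k)=f'(X_k)e_k$ and $-DL^{-1}f(X_k)=\widetilde f(X_k)e_k$, and the reduction of the variance to a quadruple sum controlled by a Gebelein-type covariance inequality are exactly the ingredients of that reference. You are also correct that the Gebelein step---showing the covariance depends only on the cross-block correlations with constants governed by $L^4(\gamma)$ norms---is the technical core and the place where the $\mathbb{D}^{1,4}$ hypothesis is essential; the subsequent bookkeeping yielding $n^{-1}S_n^3$ is routine.
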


\begin{thm}[See \cite{HU2}] \label{thm:BM-Density-Rate}
Let $N \sim \mathcal{ N }(0,1)$. Assume that $X =\{X_k,  k\in\mathbb{Z}\}$ is a centered Gaussian stationary sequence with unit variance and covariance function $r(k)=\mathbb{E}[X_{0}X_{k}]$ whose spectral density function $f_r$ satisfies in   $ \log(f_r )\in L^1[-\pi,\pi]$. Fix $2 \le  d \le q$. Let $V_n =  \frac{1}{\sqrt{n}}  \sum_{k=1}^{n}  \sum_{j=d}^{q} a_j H_j (X_k)$ where $a_j \in \R$ for $d \le j \le q$, and that $\sigma^2_n : = \Var \left(   V_n \right)$. Define  $F_n :=  \frac{V_n}{\sigma_n}$.
\begin{description}
	\item[(a)] Assume further that $\sigma^2 :=  \sum_{j=d}^{q}  j! a^2_j  \sum_{\nu \in \Z}   r(\nu) ^j \in (0,\infty)$. Then, for every $m \ge 0 $ as  $n$ tends to infinity
	\begin{equation*}
\Big \Vert  p^{(m)}_n   - p^{(m)}_N   \Big \Vert_{L^{\infty}(\R)}:=
  \sup_{x \in \R}	  \Big  \vert     p^{(m)}_n (x)  -   p^{(m)}_N (x)  \Big \vert    \longrightarrow   0
	\end{equation*}
	where here $p^{(m)}_n$ and $p^{(m)}_N$ denote the $m$th derivative of density function of random variables $F_n$ and $N$ respectively.
	\item[(b)] In particular, if $q=d$ (in other words the sequence $F_n$ belongs to the fixed Wiener chaos of order $d$), then for all $m \ge 0$  there exist  $n_0 \in \N$  and a constant $C$ (depending only on $m$ and $q$) such that for all $n \ge n_0$  we have
	
	\begin{equation}\label{eq:Quantitative-Density-Convergence}
	\Big \Vert p^{(m)}_n - p^{(m)}_N  \Big \Vert_{L^{\infty}(\R)}  \le C \, \sqrt{        \E   \left[F^4_n \right] -     3       }.
	\end{equation}
	\end{description}
	\end{thm}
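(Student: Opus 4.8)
Since this statement is quoted from \cite{HU2}, the plan is to prove it along the lines of that reference, recast in the present notation; the overall strategy is the standard Malliavin-calculus route to convergence of densities. Realise the stationary sequence as $X_k=W(h_k)$ for an isonormal Gaussian process $\{W(h):h\in\HH\}$ with $\langle h_k,h_l\rangle_{\HH}=r(k-l)$, so that $V_n=\tfrac{1}{\sqrt n}\sum_{k=1}^n g(X_k)$ with $g=\sum_{j=d}^q a_jH_j$ lies in the finite direct sum of Wiener chaoses $\bigoplus_{j=d}^q C_j$, and $F_n=V_n/\sigma_n\in\mathbb D^{\infty}$. The engine is a general criterion (see \cite{nualart} and the references in \cite{HU2}): if $(F_n)$ converges in law to $N\sim\mathcal N(0,1)$, if the Sobolev norms $\norm{F_n}_{k,p}$ are bounded in $n$ for every $k,p$, and if the negative moments $\E[\norm{DF_n}_{\HH}^{-p}]$ are bounded in $n$ for every $p\ge1$, then both $F_n$ and $N$ possess $C^{\infty}$ densities and $\norm{p_n^{(m)}-p_N^{(m)}}_{L^{\infty}(\R)}\to0$ for every $m\ge0$. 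The mechanism behind the criterion is the iterated integration-by-parts formula $p_n^{(m)}(x)=\E[\mathbf 1_{\{F_n>x\}}\,\delta(u_{n,m})]$, in which $u_{n,m}$ is built explicitly from $DF_n$, the higher Malliavin derivatives of $F_n$, and $\norm{DF_n}_{\HH}^{-2}$; once the three uniform bounds hold, these weights are bounded in $L^2$, and the distributional convergence $F_n\to N$ propagates through the formula by dominated convergence.

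Thus everything reduces to verifying the three hypotheses for $F_n$. The first, $F_n\to N$ in distribution, is exactly the Breuer--Major Theorem~\ref{thm:Breuer-Major}: the summability $\sum_{\nu}\abs{r(\nu)}^d<\infty$ follows from $f_r\in L^1$, and the positivity $\sigma^2>0$ is part of hypothesis (a); in particular $\sigma_n^2\to\sigma^2\in(0,\infty)$, so $\sigma_n$ is bounded away from $0$. The second is soft: $V_n$ belongs to the fixed finite sum of chaoses $\bigoplus_{j=d}^q C_j$ with $\E[V_n^2]=\sigma_n^2$ bounded above and below, so hypercontractivity of the Ornstein--Uhlenbeck semigroup gives $\norm{F_n}_{k,p}\le C_{k,p}$ uniformly in $n$, for all $k,p$.

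The third hypothesis --- uniform control of $\E[\norm{DF_n}_{\HH}^{-p}]$ --- is the delicate point, and it is where the spectral assumption enters. Writing $g'=\sum_{j=d}^q ja_jH_{j-1}$ and using $DX_k=h_k$ one gets
\[
\norm{DF_n}_{\HH}^2=\frac{1}{\sigma_n^2\,n}\sum_{k,l=1}^{n}g'(X_k)\,g'(X_l)\,r(k-l),
\]
a random quadratic form with the Toeplitz covariance matrix $(r(k-l))_{k,l\le n}$, whose symbol is $2\pi f_r$. Since for tempered Gaussian noise $f_r$ vanishes at the origin (recall $h^I(\omega)\approx\omega^2/(\lambda^2+\omega^2)$ near $0$), one cannot bound the smallest eigenvalue of this matrix away from $0$; the genuine input is $\log f_r\in L^1([-\pi,\pi])$, which by the Szeg\H o--Wold theory forces $f_r>0$ a.e.\ and makes the sequence purely nondeterministic, i.e.\ $X_k=\sum_{j\ge0}c_j\varepsilon_{k-j}$ for a standard Gaussian white noise $(\varepsilon_k)$ with $c_0\neq0$. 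Passing to the innovation orthonormal basis, $D_{\varepsilon_m}F_n=\tfrac{1}{\sigma_n\sqrt n}\sum_{k=m\vee1}^{n}g'(X_k)\,c_{k-m}$, which yields the causal lower bound
\[
\norm{DF_n}_{\HH}^2\;\ge\;\frac{1}{\sigma_n^2\,n}\sum_{m=1}^{n}\Bigl(\sum_{k=m}^{n}g'(X_k)\,c_{k-m}\Bigr)^2 .
\]
Combining this with the fact that $\norm{DF_n}_{\HH}^2$ is itself a finite-chaos functional converging in every $L^p$ to a strictly positive constant (equal to $q$ in the single-chaos case $q=d$, by item (d) of Theorem~\ref{thm:4MT}), one shows that $\norm{DF_n}_{\HH}^2$ is small only on events of rapidly decaying probability, giving $\sup_n\E[\norm{DF_n}_{\HH}^{-p}]<\infty$. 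For part (b), when $q=d$ the chaos is fixed: Theorem~\ref{thm:4MT}(b) shows the relevant quantities decay at the sharp rate $\sqrt{\E[F_n^4]-3}$, and tracking this rate through the integration-by-parts representation of $p_n^{(m)}-p_N^{(m)}$ (each derivative costing only a fixed power, absorbed into the constant) gives $\norm{p_n^{(m)}-p_N^{(m)}}_{L^{\infty}(\R)}\le C\sqrt{\E[F_n^4]-3}$ for $n$ large.

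The main obstacle is precisely this non-degeneracy estimate in the third step: the distributional convergence and the Sobolev bounds are routine, but controlling uniformly in $n$ the small values of the random Toeplitz quadratic form $\norm{DF_n}_{\HH}^2$ --- whose entries are non-Gaussian, chaos-valued variables and whose matrix has arbitrarily small eigenvalues --- is the heart of the argument, and it is here that the hypothesis $\log f_r\in L^1$ is indispensable.
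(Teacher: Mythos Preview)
The paper does not prove this theorem. Theorem~\ref{thm:BM-Density-Rate} sits in Appendix~B under the heading ``Malliavin--Stein method: selective results'' and is stated there purely as a citation of \cite{HU2}, with no proof given; the paper merely invokes it in the proof of Theorem~\ref{thm:BM-Tempered-Densities}. So there is no ``paper's own proof'' to compare your proposal against.

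That said, your sketch is a reasonable outline of the strategy of \cite{HU2}, with one caveat and one genuine gap. The caveat: your sentence ``the summability $\sum_{\nu}\abs{r(\nu)}^d<\infty$ follows from $f_r\in L^1$'' is not right --- integrability of the spectral density is automatic and implies nothing of the sort; in the statement the finiteness of $\sigma^2$ is simply \emph{assumed}, and Breuer--Major is applied under that assumption. The gap: the crucial negative-moment estimate $\sup_n\E[\Vert DF_n\Vert_{\HH}^{-p}]<\infty$ is where all the difficulty lies, and your justification (``$\Vert DF_n\Vert_{\HH}^2$ is a finite-chaos functional converging in $L^p$ to a positive constant, so it is small only on events of rapidly decaying probability'') is not a proof. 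Convergence in $L^p$ to a positive constant does \emph{not} by itself give uniform negative moments; one needs a quantitative small-ball estimate for polynomials in Gaussian variables (Carbery--Wright type) combined with the causal innovation representation you wrote down, and the argument in \cite{HU2} carrying this out is substantially more delicate than your one-line summary suggests. Your identification of this step as ``the heart of the argument'' is correct, but the proposal does not actually supply it.
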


\begin{thm}[See   \cite{n-p-AOP-Exact-Asymptotic}]  \label{thm:BM-Exact-Asymptotic}
Let $(F_n   :   n\ge 1)$  be a sequence of centered square integrable functionals of some isonromal Gaussian process $W  =\{  W(h)   :  h \in \HH \}$ such that  $\E  [F^2_n]  \to 1$  as $n $ tends to infinity.  Assume further the following  assumptions hold:
\begin{itemize}   	
	\item[(a)] for every $n$,  the random variable $F_n \in \mathbb{D}^{1,2}$, and that the law of $F_n$ is absolutely continuous with respect to the Lebesgue measure.
	\item[(b)] the quantity $\varphi(n):=  \sqrt{  \E  \left[  (1 -  \langle   DF_n  , -DL^{-1}F_n  \rangle_{\HH}  )^2  \right]  }$ is such that:   (i)  $\varphi(n) < \infty$ for every $n$, (ii)  $\varphi(n) \to 0$ as $n $ tends to infinity, and (iii)  there exists $m \in \N$ such that $\varphi(n) > 0$ for all $n \ge m$.
	\item[(c)] as $n$ tends to infinity,
	 \[   \left( F_n  ,\frac{1 -  \langle   DF_n  , -DL^{-1}F_n  \rangle_{\HH} }{\varphi(n)}  \right)   \stackrel{d}{\longrightarrow}  (N_1,N_2) \]
	 where $(N_1,N_2)$ is a two dimensional centered Gaussian vector with $\E[N^2_1]= \E[N^2_2]=1$, and $\E[N_1 N_2] = \rho$.
	\end{itemize}
Then, we have $d_{Kol} (F_n, N)   \le   \varphi(n)$, and moreover, for every $z \in  \R$ as $n \to \infty$:
\[  \frac{ \mathbb{P}  \left(  F_n
	\le z \right)  -   \mathbb{P} (N \le z)}{\varphi(n)}   \longrightarrow  \frac{\rho}{3 \sqrt{2\pi}} (z^2 -1) e ^{- \frac{z^2}{2}}. \]
	\end{thm}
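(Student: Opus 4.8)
The plan is to run Stein's method for the Kolmogorov distance coupled with the Malliavin integration-by-parts formula, and then to read off the first-order correction term from the joint convergence assumed in $(c)$. I will write $\Phi,\phi$ for the standard normal distribution function and density, and for fixed $z\in\R$ let $f_z$ be the bounded Lipschitz solution of the Stein equation $f_z'(x)-xf_z(x)=\1_{(-\infty,z]}(x)-\Phi(z)$, which is available in closed form and satisfies $\|f_z'\|_\infty\le 1$. First I would establish the exact error representation
\[
\Pp(F_n\le z)-\Phi(z)=\E\big[f_z'(F_n)-F_nf_z(F_n)\big]=\E\Big[f_z'(F_n)\big(1-\langle DF_n,-DL^{-1}F_n\rangle_\HH\big)\Big],
\]
where the second equality uses $F_n=\delta(-DL^{-1}F_n)$, the duality between $D$ and $\delta$, and the Malliavin chain rule $Df_z(F_n)=f_z'(F_n)DF_n$ --- the latter being legitimate precisely because, by hypothesis $(a)$, the law of $F_n$ is absolutely continuous, so $\Pp(F_n=z)=0$ and the single non-smooth point of $f_z$ is irrelevant. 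Taking the supremum over $z$ and estimating by Cauchy--Schwarz with $\|f_z'\|_\infty\le 1$ immediately gives $d_{Kol}(F_n,N)\le\varphi(n)$, which is the first assertion.

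For the sharp asymptotics I would set $G_n:=\varphi(n)^{-1}\big(1-\langle DF_n,-DL^{-1}F_n\rangle_\HH\big)$, which is well defined once $n\ge m$ by $(b)(iii)$ and, by the very definition of $\varphi(n)$, satisfies $\E[G_n^2]=1$ identically; in particular $(G_n)$ is $L^2$-bounded, hence uniformly integrable. The representation above then reads
\[
\frac{\Pp(F_n\le z)-\Phi(z)}{\varphi(n)}=\E\big[f_z'(F_n)\,G_n\big],\qquad n\ge m.
\]
Now I would invoke $(c)$: $(F_n,G_n)\limd(N_1,N_2)$ with $(N_1,N_2)$ centred jointly Gaussian, unit variances and $\E[N_1N_2]=\rho$. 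Passing to the limit on the right-hand side requires care because $f_z'$ has a jump at $z$; the key point is that $\Pp(N_1=z)=0$, so $(x,y)\mapsto f_z'(x)y$ is continuous $(N_1,N_2)$-almost everywhere, and together with the uniform integrability of $\{f_z'(F_n)G_n\}$ (dominated in $L^2$ by $\{G_n\}$) --- most cleanly via a Skorokhod coupling --- one gets $\E[f_z'(F_n)G_n]\to\E[f_z'(N_1)N_2]$.

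The last step is to compute $\E[f_z'(N_1)N_2]$. Writing $N_2=\rho N_1+\sqrt{1-\rho^2}\,Z$ with $Z$ an independent standard normal reduces this to $\rho\,\E[f_z'(N_1)N_1]$. Plugging in the explicit Stein kernel --- namely $f_z(x)=\sqrt{2\pi}\,e^{x^2/2}\Phi(x)(1-\Phi(z))$ for $x\le z$ and $f_z(x)=\sqrt{2\pi}\,e^{x^2/2}\Phi(z)(1-\Phi(x))$ for $x>z$ --- the integrand $xf_z'(x)\phi(x)$ collapses to elementary expressions in $x,\Phi(x),\phi(x)$ on each half-line; a short integration (using $\int x\phi=-\phi$ and $\int x^2\Phi=\tfrac{x^3}{3}\Phi+\tfrac13(x^2+2)\phi$) makes the cubic terms cancel and leaves $\E[f_z'(N_1)N_1]=\tfrac13(z^2-1)\phi(z)=\tfrac{1}{3\sqrt{2\pi}}(z^2-1)e^{-z^2/2}$, so the limit equals $\tfrac{\rho}{3\sqrt{2\pi}}(z^2-1)e^{-z^2/2}$, as claimed. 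I expect the two delicate points to be $(i)$ the limit passage with the discontinuous kernel $f_z'$, which is exactly what hypothesis $(a)$ --- via $\Pp(N_1=z)=0$ --- together with the built-in normalization $\E[G_n^2]\equiv 1$ is there to handle, and $(ii)$ the explicit evaluation isolating the universal profile $\tfrac13(z^2-1)\phi(z)$; everything else is routine Malliavin--Stein bookkeeping.
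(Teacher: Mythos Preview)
The paper does not give a proof of this statement: it appears in Appendix~B as a result quoted verbatim from \cite{n-p-AOP-Exact-Asymptotic}, with no argument supplied. So there is nothing in the paper to compare your proposal against.

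That said, your sketch is a faithful reconstruction of the original Nourdin--Peccati argument: the Stein--Malliavin representation
\[
\Pp(F_n\le z)-\Phi(z)=\E\big[f_z'(F_n)\big(1-\langle DF_n,-DL^{-1}F_n\rangle_\HH\big)\big],
\]
the Cauchy--Schwarz bound for $d_{Kol}$, the passage to the limit via uniform integrability of $G_n$ and a.e.\ continuity of $f_z'$ at the limiting law, and the explicit evaluation $\E[N_1 f_z'(N_1)]=\tfrac13(z^2-1)\phi(z)$ are exactly the steps in the cited source. The two points you flag as delicate --- justifying the chain rule for the merely Lipschitz $f_z$ via absolute continuity of the law of $F_n$, and the limit interchange for the discontinuous integrand --- are precisely the places where the original proof also spends its effort, and you have identified the right ingredients (hypothesis~(a), $\E[G_n^2]\equiv1$, $\Pp(N_1=z)=0$) to close them.
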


\begin{thm}[See \cite{b-n-t-almost-sure}] \label{thm:BM-AlmostSure}
	Let $N \sim \mathcal{ N }(0,1)$. Assume that $X =\{X_k,  k\in\mathbb{Z}\}$ is a centered Gaussian stationary sequence with unit variance and covariance function $r(k)=\mathbb{E}[X_{0}X_{k}]$ such that $\sum_{\nu \in \Z}  \vert  r(\nu) \vert  <  \infty$. Assume that $f \in L^2(\R,\gamma)$ is a non-constant function of the class $C^2(\R)$ so that $\E[f''(N)^4]< \infty$ and that $\E_\gamma[f]=0$.  Let $V_n =  \frac{1}{\sqrt{n}}  \sum_{k=1}^{n} f (X_k)$, and that $\sigma^2_n : = \Var \left(   V_n \right)$. Define  $F_n :=  \frac{V_n}{\sigma_n}$. If as $n$ tends to infinity, $\sigma^2_n  \to \sigma^2 > 0$,  then the sequence $(F_n : n \ge 1)$  converges in distribution towards $N$, and moreover it satisfies an ASCLT meaning that, almost surely, for every bounded continuous function $\varphi : \R \to \R$ it holds that
	\[  \frac{1}{\log n} \sum_{k=1}^{n} \frac{1}{k} \varphi (F_n)  \longrightarrow \E \left[  \varphi(N) \right], \quad \text{ as } \quad n \to \infty. \]
\end{thm}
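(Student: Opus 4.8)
\emph{Proof strategy.} The plan is to derive the statement from two ingredients: the classical Breuer--Major Theorem~\ref{thm:Breuer-Major}, which supplies the underlying central limit theorem, and the Ibragimov--Lifshits criterion for almost sure central limit theorems in the Malliavin-calculus form developed by Bercu, Nourdin and Taqqu~\cite{b-n-t-almost-sure}. First I would record the CLT itself: since $|r(\nu)|\le r(0)=1$ and $f$ has Hermite rank $d\ge 1$, the hypothesis $\sum_{\nu\in\Z}|r(\nu)|<\infty$ forces $\sum_{\nu\in\Z}|r(\nu)|^{d}<\infty$, so Theorem~\ref{thm:Breuer-Major} gives $V_n\limd\mathcal N(0,\sigma^2)$; combined with $\sigma^2_n\to\sigma^2>0$ and Slutsky's lemma this yields $F_n=V_n/\sigma_n\limd N$. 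Everything then reduces to upgrading this convergence in law to the ASCLT.

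For the ASCLT I would invoke the sufficient condition exploited in \cite{b-n-t-almost-sure}: once $F_n\limd N$ and $\sup_n\E[F_n^2]<\infty$, it is enough to produce a constant $C>0$ and an exponent $\gamma>0$ such that, for every $\varphi$ in a convergence-determining class (for instance $\varphi(x)=e^{itx}$, $t\in\R$) and all integers $1\le k\le l$,
\[
\big|\,\E[\varphi(F_k)\,\varphi(F_l)]-\E[\varphi(F_k)]\,\E[\varphi(F_l)]\,\big|\ \le\ C\Big(\tfrac kl\Big)^{\gamma}.
\]
Such a bound makes $\Var\big(\tfrac1{\log n}\sum_{k\le n}\tfrac1k\varphi(F_k)\big)$ tend to $0$, and in fact do so quickly enough that the standard Borel--Cantelli plus monotonicity argument (along a super-geometric subsequence) gives $\tfrac1{\log n}\sum_{k\le n}\tfrac1k\varphi(F_k)\to\E[\varphi(N)]$ almost surely, i.e.\ the ASCLT.

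The heart of the argument is thus the covariance estimate above, which I would obtain by combining the Wiener chaos decomposition with Malliavin integration by parts. Writing $f=\sum_{q\ge d}a_qH_q$ and representing $X_k=W(\varepsilon_k)$ for unit vectors $\varepsilon_k$ of an isonormal Hilbert space $\HH$ with $\langle \varepsilon_k,\varepsilon_l\rangle_\HH=r(k-l)$, one has $V_n=\sum_{q\ge d}a_q\,I_q\big(n^{-1/2}\sum_{k=1}^n \varepsilon_k^{\otimes q}\big)$, whence by the isometry for multiple integrals
\[
\E[V_kV_l]=\frac1{\sqrt{kl}}\sum_{i=1}^{k}\sum_{j=1}^{l}\rho(i-j),\qquad \rho(\nu):=\sum_{q\ge d}q!\,a_q^2\,r(\nu)^q .
\]
Since $|r(\nu)|^q\le|r(\nu)|^d\le|r(\nu)|$ for $q\ge d\ge1$, one gets $\sum_{\nu}|\rho(\nu)|\le\|f\|_{L^2(\R,\gamma)}^2\,\sum_\nu|r(\nu)|<\infty$, hence $\sum_{i=1}^k\sum_{j=1}^l|\rho(i-j)|\le k\sum_\nu|\rho(\nu)|$ and therefore $|\E[V_kV_l]|\le C\sqrt{k/l}$; as $\sigma_k,\sigma_l$ are bounded away from $0$ and $\infty$ this reads $|\mathrm{Cov}(F_k,F_l)|\le C\sqrt{k/l}$. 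To transfer this to the covariances of $\varphi(F_k)$ and $\varphi(F_l)$ I would follow \cite{b-n-t-almost-sure}: write the covariance through $\langle DF_k,\,-DL^{-1}(\varphi(F_l)-\E[\varphi(F_l)])\rangle_\HH$, use the chain rule~\eqref{e:chainrule} and the multiplication formula~\eqref{multiplication}, and bound the resulting terms by $|\mathrm{Cov}(F_k,F_l)|$ plus quantities uniformly bounded in $k,l$. This last control is precisely where $f\in C^2(\R)$ and $\E[f''(N)^4]<\infty$ enter: they guarantee $F_n\in\mathbb{D}^{2,4}$ with $\sup_n\E\big[\|DF_n\|_\HH^4\big]<\infty$ and $\sup_n\E\big[\|D^2F_n\|_{\HH^{\otimes2}}^4\big]<\infty$, keeping all the error terms bounded.

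\textbf{Main obstacle.} The CLT and the elementary $\ell^1$-bound on $\rho$ are routine; the genuinely delicate step is the transfer, namely proving that smallness of $\mathrm{Cov}(F_k,F_l)$ propagates to $\mathrm{Cov}(\varphi(F_k),\varphi(F_l))$ with the \emph{same} exponent $(k/l)^\gamma$, uniformly over $\varphi$. This needs the full Malliavin integration-by-parts identity together with the uniform $L^4$-bounds on the first and second Malliavin derivatives of $F_n$, and it is here that the regularity hypotheses on $f$ are genuinely used; once it is available, the Ibragimov--Lifshits criterion applies and the ASCLT follows.
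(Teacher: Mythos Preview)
The paper does not prove this theorem at all: Theorem~\ref{thm:BM-AlmostSure} sits in Appendix~B as a quoted result from \cite{b-n-t-almost-sure}, with no accompanying argument. So there is nothing to compare your proposal against on the paper's side.

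That said, your sketch is a faithful outline of the strategy in \cite{b-n-t-almost-sure}: reduce to the Ibragimov--Lifshits criterion, control $\E[V_kV_l]$ by the $\ell^1$ bound on the covariance of $f(X_0)$ and $f(X_\nu)$, and then transfer to $\mathrm{Cov}(\varphi(F_k),\varphi(F_l))$ via Malliavin integration by parts, using the $C^2$/fourth-moment hypotheses on $f$ to get uniform $\mathbb{D}^{2,4}$ bounds. You correctly identify the transfer step as the nontrivial part. One small caution: the precise covariance inequality used in \cite{b-n-t-almost-sure} is not literally $|\mathrm{Cov}(\varphi(F_k),\varphi(F_l))|\le C(k/l)^\gamma$ but rather a bound on $\E\big|\E[e^{itF_l}\mid\mathcal F_k]-\E[e^{itF_l}]\big|^2$ of the required form, and the argument there also needs a separate estimate showing $\E[(F_l-F_k)^2]$ is small when $k/l$ is close to $1$; your sketch suppresses this second ingredient.
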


\bibliographystyle{abbrv}
\bibliography{FBMVSTFBM_FINAL_}

\end{document}